\def\titlerunning#1{\gdef\titrun{#1}}
\def\author#1{\gdef\autrun{\def\and{\unskip, }#1}\gdef\@author{#1}}
\def\address#1{{\def\and{\\\hspace*{18pt}}\renewcommand{\thefootnote}{}%
\footnote {#1}}%
\markboth{\autrun}{\titrun}}
\def\email#1{e-mail: #1}
\def\subjclass#1{{\renewcommand{\thefootnote}{}%
\footnote{\emph{Mathematics Subject Classification (2010):} #1}}}
\def\keywords#1{\par\medskip
\noindent\textbf{Keywords.} #1}
\newtheorem{thm}{Theorem}[section]
\newtheorem{cor}[thm]{Corollary}
\newtheorem{lem}[thm]{Lemma}
\newtheorem{prp}[thm]{Proposition}
\theoremstyle{definition}
\newtheorem{rem}[thm]{Remark}
\numberwithin{equation}{section}
\begin{document}

\newcommand{\Q}{\mathbb{Q}}
\newcommand{\R}{\mathbb{R}}
\newcommand{\Rr}{\overline{\mathbb{R}}}
\newcommand{\I}{\mathbb{I}}
\newcommand{\Z}{\mathbb{Z}}
\newcommand{\N}{\mathbb{N}}
\newcommand{\F}{\mathcal{F}}
\newcommand{\p}{\mathbb{P}}
\newcommand{\B}{\mathcal{B}}
\newcommand{\M}{\mathcal{M}}
\newcommand{\A}{\mathcal{A}}
\newcommand{\Af}{\mathfrak{A}}
\newcommand{\Pp}{\mathcal{P}}
\newcommand{\pp}{\mathfrak{P}}
\newcommand{\s}{\mathcal{S}}
\newcommand{\G}{\mathcal{G}}
\newcommand{\RR}{\mathcal{R}}
\newcommand{\E}{\mathbb{E}}
\newcommand{\h}{\mathcal{H}}
\newcommand{\En}{\mathrm{E}^n}
\newcommand{\eps}{\varepsilon}
\newcommand{\supp}{\mathrm{supp\,}}
\newcommand{\law}{\mathrm{Law}}
\newcommand{\sgn}{\mathrm{sgn\,}}
\newcommand{\pr}{\mathrm{pr}}
\newcommand{\Ss}{\mathfrak{S}}
\newcommand{\tr}{\bigtriangleup}
\newcommand{\St}{\mathrm{St}}
\newcommand{\leb}{\mathrm{Leb}}
\newcommand{\Int}{\mathrm{Int\,}}
\newcommand{\Var}{\mathrm{Var\,}}


\baselineskip=17pt


\titlerunning{On asymptotic behavior of the modified Arratia flow}

\title{On asymptotic behavior of \\ the modified Arratia flow}

\author{Vitalii Konarovskyi}


\maketitle

\address{Universit\"{a}t Leipzig, Fakult\"{a}t f\"{u}r Mathematik und Informatik, Augustusplatz 10, 04109 Leipzig, Germany; \email{konarovskiy@gmail.com}}

\subjclass{Primary 82B21, 60K35; Secondary 60D05}


\begin{abstract}
We study asymptotic properties of the system of interacting diffusion particles on the real line which transfer a mass~\cite{Konarovskyi:2014:arx}. The system is a natural generalization of the coalescing Brownian motions~\cite{Arratia:1979,Le_Jan:2004}. The main difference is that diffusion particles coalesce summing their mass and changing their diffusion rate inversely proportional to the mass. First we construct  the system in the case where the initial mass distribution has the moment of the order greater then two as an $L_2$-valued martingale with a suitable quadratic variation. Then we find the relationship between the asymptotic behavior of the particles and local properties of the mass distribution at the initial time.

\keywords{Modified Arratia flow, interacting particle system, coalescing, asymptotic behavior, clusters}
\end{abstract}

\section{Introduction}

In the paper we study local properties of the modified Arratia flow. The flow is a variant of the Arratia
flow~\cite{Arratia:1979,Dorogovtsev:2004,Le_Jan:2004} for a system of Brownian motions on the real line which move independently up to their meeting and then coalesce. The fundamental new feature is that particles
carry mass which is aggregated as particles coalesce and which determines the diffusivity of the individual particle in an inverse proportional way. The modified Arratia flow was first constructed in~\cite{Konarovskiy:2010:TVP:en} (see also~\cite{Konarovskiy:2010:UMJ:en,Konarovskiy:2011:TSP,Konarovskyi:2013:COSA,Konarovskyi:2014:TSP}), as a physical generalization of the system of coalescing Brownian motions, in the case where particles start from integer points with unit masses. Later in~\cite{Konarovskyi:2014:arx} the modified Arratia flow for a system of particles which start from all points of the interval $[0,1]$ with zero mass (the distribution of the mass of particles at the initial time is the Lebesgue measure on $[0,1]$) was constructed as a scaling limit.

The first main result of the paper is the generalization of the model constructed in~\cite{Konarovskyi:2014:arx} to the case of any mass distribution of particles at the start. Using martingale methods, we prove the following theorem.

\begin{thm}\label{theorem_modification}
 For each $b>0$ and non-decreasing c\`{a}dl\'{a}g function $g$ satisfying
 \begin{equation}\label{f_L_2_eps_cond}
\int_0^b|g(u)|^{2+\eps}du<\infty
\end{equation}
for some $\eps>0$, there exists a process $\{X(u,t),\ u\in(0,b),\ t\in[0,T]\}$ from Skorohod space $D((0,b),C[0,T])$ such that
 \begin{enumerate}
\item[$(C1)$] for all $u\in(0,b)$, $X(u,\cdot)$ is a continuous square integrable martingale with respect to the filtration
$$
\F_t=\sigma(X(u,s),\ s\leq t,\ u\in(0,b)),\quad t\in[0,T];
$$

\item[$(C2)$] $X(u,0)=g(u)$ for all $u\in(0,b)$;

\item[$(C3)$] $X(u,t)\leq X(v,t)$ for all $u<v$ and $t\in[0,T]$;

\item[$(C4)$]  for all $t\in[0,T]$ and $u,v\in(0,b)$ the joint quadratic variation
$$
\langle X(u,\cdot),X(v,\cdot)\rangle_t=\int_0^t\frac{\I_{\{\tau_{u,v}\leq s\}}ds}{m(u,s)},
$$
where $m(u,t)=\leb\{w:\ \exists s\leq t\ X(u,s)=X(w,s)\}$ and
$\tau_{u,v}=\inf\{t:\ X(u,t)=X(v,t)\}\wedge T$.
\end{enumerate}
\end{thm}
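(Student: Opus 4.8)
The plan is to construct $X$ as a limit of finite systems of coalescing Brownian particles carrying mass, and then to verify the four properties by passing to the limit. First I would discretize the initial condition: fix a sequence of partitions $0=u_0^n<u_1^n<\dots<u_{k_n}^n=b$ with mesh tending to zero, and replace $g$ by the step function $g_n$ taking on each subinterval $(u_{i-1}^n,u_i^n]$ the value $g(u_i^n)$, so that $g_n\uparrow$ and $g_n\to g$ in $L_2$, which is guaranteed by \eqref{f_L_2_eps_cond}. To this data I associate a finite particle system: $k_n$ particles started at the points $g_n$, the $i$-th carrying mass $p_i^n=u_i^n-u_{i-1}^n$, each performing a Brownian motion with diffusion rate inversely proportional to its current mass; whenever two particles meet they stick together, adding their masses and thereby slowing down. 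Such a system is built rigorously by solving, between successive (finitely many) collision times, independent Brownian motions with the prescribed variances, and restarting at each collision with merged masses. Setting $X_n(u,t)$ equal to the position at time $t$ of the cluster whose label interval contains $u$ gives a random element of $D((0,b),C[0,T])$ for which the finite-system analogues of $(C1)$--$(C4)$ hold by construction, with $m_n(u,t)$ the total mass of the cluster of $u$.

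Second, I would establish uniform-in-$n$ estimates strong enough to yield tightness in $D((0,b),C[0,T])$ and to control the passage to the limit. The two basic quantities are the energy and the modulus of continuity in $t$. Since each trajectory $X_n(u,\cdot)$ is a martingale with $\langle X_n(u,\cdot)\rangle_t=\int_0^t ds/m_n(u,s)$, integrating It\^o's formula for $X_n(u,t)^2$ over $u\in(0,b)$ gives
\begin{equation*}
\E\int_0^b X_n(u,t)^2\,du=\int_0^b g_n(u)^2\,du+\E\int_0^t N_n(s)\,ds,
\end{equation*}
where $N_n(s)=\int_0^b du/m_n(u,s)$ is the number of clusters alive at time $s$; the heart of the matter is the a priori bound $\E\int_0^t N_n(s)\,ds\le C(b,T)$ uniformly in $n$, which tames the instantaneous coalescence near $t=0$. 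Together with the Burkholder--Davis--Gundy inequality applied to the martingales $X_n(u,\cdot)$ and the order relation $(C3)$, these bounds give tightness of the laws of $X_n$; the higher moment in \eqref{f_L_2_eps_cond} is used here to obtain uniform integrability of $X_n(u,t)^2$ (via de la Vall\'ee--Poussin), which is exactly what allows second moments, and hence quadratic variations, to pass to the limit.

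Third, having extracted a subsequential limit $X$ (in law, and by a Skorokhod representation on a common space a.s.\ in $D((0,b),C[0,T])$), I would verify $(C1)$--$(C4)$. Properties $(C2)$ and $(C3)$ pass to the limit immediately from $g_n\to g$ and the stability of order under convergence. For $(C1)$ and $(C4)$ I would show that for fixed $u$ the limit $X(u,\cdot)$ is a martingale and that both $X(u,\cdot)$ and $X(u,\cdot)X(v,\cdot)-\int_0^{\cdot}\I_{\{\tau_{u,v}\le s\}}\,ds/m(u,s)$ are martingales in the limit, by testing against bounded $\F_s$-measurable functionals and using the uniform integrability from the previous step to pass the expectations through. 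The identification of the limiting bracket requires showing that $m_n(u,s)\to m(u,s)$ and $\tau^n_{u,v}\to\tau_{u,v}$ for a.e.\ $(u,v)$, i.e.\ that the cluster structure converges.

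The main obstacle is precisely this convergence of the cluster mass $m_n(u,s)$ to $m(u,s)=\leb\{w:\exists s'\le s,\ X(u,s')=X(w,s')\}$ and, relatedly, the control of the singular coefficient $1/m$ near $t=0$, where every label starts with zero mass and infinitely many particles must coalesce instantaneously for the dynamics to make sense. Proving the uniform energy bound $\E\int_0^t N_n(s)\,ds\le C$---equivalently, that the number of clusters does not blow up too fast as $s\downarrow 0$---and transferring it into a statement about a.s.\ convergence $m_n\to m$ is where the real work lies; once the cluster structure is shown to be stable under the limit, the order preservation $(C3)$ forces coalesced labels to stay together, and the quadratic-variation identity $(C4)$ follows from the convergence of the corresponding martingale brackets.
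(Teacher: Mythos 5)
Your overall strategy---discretize $g$ into step functions, run the finite coalescing system of mass-carrying particles, prove tightness, pass to the limit and identify the bracket---is the same skeleton as the paper's. But the proposal leaves unproven exactly the two points on which the whole construction hinges, and in one place it attributes the role of the hypothesis \eqref{f_L_2_eps_cond} to the wrong step. The bound you call ``the heart of the matter,'' $\E\int_0^b du/m_n(u,s)\le C/\sqrt{s}$ uniformly in $n$, is not obtained from uniform integrability or de la Vall\'ee--Poussin; the paper proves it from the hitting-time comparison of Lemma~\ref{lemma_estimation_of_p_m}: on the event $\{m(u,t)<r\}$ the difference $Y(u+r,\cdot)-Y(u,\cdot)$ is a positive martingale whose bracket has already accumulated at least $t/r$, so $\p\{m(u,t)<r\}\le \frac{2\sqrt r}{\sqrt{2\pi t}}\,(g(u+r)-g(u))$. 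Integrating this in $u$ and $r$ and applying H\"older with exponent $q$ conjugate to $p=2+\eps$ is precisely where \eqref{f_L_2_eps_cond} enters (Proposition~\ref{proposition_estimation_of_m}): the integral $\int r^{-\frac{1}{2\beta}-\frac{1}{\beta q}}dr$ converges at infinity for $\beta=1$ only when $q<2$, i.e.\ $p>2$. Without this lemma your a priori bound, and hence everything downstream, is an assertion rather than a proof.

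The second gap is topological. You claim tightness of the laws of $X_n$ directly in $D((0,b),C[0,T])$ and then extract an a.s.\ limit there; the paper deliberately avoids this, because uniform-in-$u$ control of the jump structure of $u\mapsto X_n(u,\cdot)$ is not available. Instead it proves tightness of the $L_2^{\uparrow}$-valued paths in $C([0,T],L_2^{\uparrow})$ via Jakubowski's criterion, identifies the limit as an $L_2^{\uparrow}$-valued martingale with $\langle X\rangle_t=\int_0^t\pr_{X(s)}ds$ (using the conditional-law argument $\law\{X(r_k+\cdot)\}=P^{X(r_k)}$ for rational $r_k>0$, which reduces the identification of the bracket to the \emph{finite} case rather than requiring $m_n(u,s)\to m(u,s)$ for the infinite system), and only then recovers a version in $D((0,b),C[0,T])$. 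That last step is itself nontrivial: one must first show $X(t)\in\St$ a.s.\ for every $t>0$ (Lemma~\ref{lemma_prop_of_pr} and Proposition~\ref{prp_values_in_St}, via the identity $\sum_n\|\pr_{X(s)}e_n\|_{L_2}^2=|\Pi_{X(s)}|$ and the trace-class property of the bracket) before Proposition~\ref{prp_modif_from_D_for_mart} yields the modification satisfying $(C1)$--$(C4)$. Your proposal correctly senses where the difficulty lies but defers it; as written it is an outline of the theorem's proof rather than a proof.
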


The process $X$ describes the evolution of particles with the mass distribution $\mu$ at the start, where $\mu$ is the push forward of the Lebesgue measure on $[0,b]$, i.e
\begin{equation}\label{f_push_forward}
\mu=g_\#\leb\big|_{[0,b]}.
\end{equation}
The following lemma explains that.

\begin{lem}
 Let $A=\{g(u),\ u\in(0,b)\}$ and $X$ from $D((0,b),C[0,T])$ satisfy $(C1)-(C4)$. Then the family of processes
 \begin{equation}\label{f_process_z}
  Z(g(u),\cdot)=X(u,\cdot),\quad u\in(0,b),
 \end{equation}
 is well-defined and satisfies \begin{enumerate}
\item[$(A1)$] for all $x\in A$ the process $Z(x,\cdot)$ is a continuous
square integrable martingale with respect to the filtration
$$
\sigma(Z(x,s),\ x\in A,\ s\leq t),\quad t\in[0,T];
$$
\item[$(A2)$] for all $x\in A$, $Z(x,0)=x$;

\item[$(A3)$] for all $x<y$ from $A$ and $t\in[0,T]$, $Z(x,t)\leq Z(y,t)$;

\item[$(A4)$] for all $x,y\in A$ the joint quadratic variation
$$
\left\langle Z(x,\cdot),Z(y,\cdot)\right\rangle_t=\int_0^t\frac{\I_{\{\tau_{x,y}^{\mu}\leq
s\}}ds}{m_{\mu}(x,s)},
$$
where $m_{\mu}(x,t)=\mu\{z:\ \exists s\leq t\ \ Z(z,s)=Z(x,s)\}$ and $\tau_{x,y}^{\mu}=\inf\{t:\
Z(x,t)=Z(y,t)\}\wedge T$.
\end{enumerate}
\end{lem}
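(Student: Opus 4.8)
The only delicate point behind the well-definedness of \eqref{f_process_z} is that $g$ need not be injective: if $g(u_1)=g(u_2)$ for some $u_1<u_2$, then the definition $Z(g(u_1),\cdot):=X(u_1,\cdot)$ would depend on the chosen preimage unless $X(u_1,\cdot)=X(u_2,\cdot)$. My plan is to fix such a pair and consider the difference $D=X(u_2,\cdot)-X(u_1,\cdot)$. By $(C1)$ it is a continuous martingale, by $(C3)$ it is non-negative, and by $(C2)$ it starts at $D(0)=g(u_2)-g(u_1)=0$. Hence $\E D(t)=\E D(0)=0$ together with $D(t)\ge0$ forces $D(t)=0$ a.s. for every $t$, and by continuity $D\equiv0$ a.s. To obtain \emph{one} event of full probability on which $Z$ is consistently defined, I would use that $g$, being monotone, has at most countably many maximal intervals of constancy; applying the previous step to a countable dense set of pairs inside each such interval and invoking the right-continuity of $u\mapsto X(u,\cdot)$ in the Skorohod space $D((0,b),C[0,T])$, one gets that a.s. $u\mapsto X(u,\cdot)$ is constant on each interval of constancy of $g$. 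On this event $Z(x,\cdot)$ is unambiguous for every $x\in A$.

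Granting well-definedness, properties $(A1)$–$(A3)$ follow by translation. For $(A1)$, the collections $\{Z(x,\cdot):x\in A\}$ and $\{X(u,\cdot):u\in(0,b)\}$ are the same family of processes, merely reindexed, so the two filtrations coincide (up to $\p$-null sets), and the martingale property, continuity and square-integrability of $Z(x,\cdot)$ are inherited from $(C1)$. Property $(A2)$ is immediate from $(C2)$. For $(A3)$, given $x<y$ in $A$ with $x=g(u)$ and $y=g(v)$, monotonicity of $g$ forces $u<v$ (otherwise $x=g(u)\ge g(v)=y$), so $(C3)$ yields $Z(x,t)=X(u,t)\le X(v,t)=Z(y,t)$.

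It remains to match the quadratic variation in $(A4)$ with the one in $(C4)$ for $x=g(u)$, $y=g(v)$. First, $\tau^{\mu}_{x,y}=\tau_{u,v}$ directly from $Z(x,\cdot)=X(u,\cdot)$ and $Z(y,\cdot)=X(v,\cdot)$. The substantive step is to show $m_{\mu}(x,t)=m(u,t)$. Writing $W_u(t)=\{w\in(0,b):\exists s\le t\ X(w,s)=X(u,s)\}$ and $V_x(t)=\{z\in A:\exists s\le t\ Z(z,s)=Z(x,s)\}$, so that $m(u,t)=\leb(W_u(t))$ and $m_{\mu}(x,t)=\mu(V_x(t))$, I would establish the identity $g^{-1}(V_x(t))=W_u(t)$. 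The inclusion $W_u(t)\subseteq g^{-1}(V_x(t))$ is clear; the reverse uses that $W_u(t)$ is \emph{saturated} under $g$, i.e. $g(w)=g(w')$ and $w'\in W_u(t)$ imply $w\in W_u(t)$, which is exactly the constancy of $X$ on intervals of constancy of $g$ established above. Since $\mu=g_\#\leb\big|_{[0,b]}$ gives $\mu(V_x(t))=\leb(g^{-1}(V_x(t)))$, this produces $m_{\mu}(x,t)=\leb(W_u(t))=m(u,t)$. Substituting $\tau^{\mu}_{x,y}=\tau_{u,v}$ and $m_{\mu}(x,\cdot)=m(u,\cdot)$ into the formula of $(C4)$ gives $(A4)$.

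The main obstacle is the well-definedness of $Z$, and more precisely the passage from the pairwise a.s. coalescence of particles sharing an initial position to a single full-probability event valid simultaneously over all intervals of constancy of $g$. Once this is in place, the saturation property that drives the mass identity in $(A4)$, together with $(A1)$–$(A3)$, reduces to bookkeeping.
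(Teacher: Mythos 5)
Your proof is correct, but it reaches the crucial well-definedness step by a genuinely different and more elementary route than the paper. The paper disposes of the claim ``$g(u)=g(v)$ implies $X(u,\cdot)=X(v,\cdot)$'' by forward reference to heavier machinery: it first converts $(C1)$--$(C4)$ into the statement that $X(\cdot,t)$ is an $L_2^{\uparrow}$-valued martingale with quadratic variation $(M')$ (Remark~\ref{remark_conditions_C_implies_mart_prop_in_L}), then invokes the monotonicity of $t\mapsto\Pi_{X(t)}$ (propositions~\ref{prp_values_in_St} and~\ref{prp_modif_from_D_for_mart}, ultimately resting on Proposition~\ref{prp_coalescing}) to conclude that $X(\cdot,t)$ stays constant on each interval of constancy of $g=X(\cdot,0)$. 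You instead observe directly that $D=X(u_2,\cdot)-X(u_1,\cdot)$ is, by $(C1)$--$(C3)$, a non-negative continuous integrable martingale started at $0$, hence identically zero; this is exactly the ``ordered martingales coalesce after meeting'' mechanism underlying Proposition~\ref{prp_coalescing}, specialized to a meeting at time $0$ and proved in two lines without any of the $L_2^{\uparrow}$ apparatus. Your approach is self-contained and avoids the forward references; the paper's buys nothing extra here beyond reusing results it needs anyway. The remaining parts coincide: $(A2)$, $(A3)$ and the identification $\tau^{\mu}_{x,y}=\tau_{u,v}$, $m_{\mu}(x,t)=m(u,t)$ via~\eqref{f_push_forward} are the same bookkeeping the paper performs (your ``saturation'' of $W_u(t)$ is just a restatement of well-definedness). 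Two small remarks: the single full-probability event you construct is a nice refinement but not needed, since every assertion in $(A1)$--$(A4)$ is pairwise or per-$x$ and the measure identity in $(A4)$ follows from the pairwise statement plus Fubini; and if you do insist on one event, note that right-continuity of $u\mapsto X(u,\cdot)$ does not reach the right endpoint $d$ of a closed constancy interval $[c,d]$, so those countably many endpoints must be adjoined to your countable dense collection by hand.
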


\begin{proof}
Since for $g(u)=g(v)$ we have $X(u,\cdot)=X(v,\cdot)$ (it follows from Remark~\ref{remark_conditions_C_implies_mart_prop_in_L} and propositions~\ref{prp_values_in_St} and~\ref{prp_modif_from_D_for_mart} below), the process $Z$ is well-defined. Moreover, if $x=g(u)$, then we have
$$
Z(x,0)=Z(g(u),0)=X(u,0)=g(u)=x
$$
and by~\eqref{f_push_forward},
\begin{align*}
 m_{\mu}(x,t)&=\mu\{z:\ \exists s\leq t\ \ Z(z,s)=Z(x,s)\}\\
 &=\leb\{v:\ \exists s\leq t\ \ Z(g(v),s)=X(u,s)\}\\
 &=\leb\{v:\ \exists s\leq t\ \ X(v,s)=X(u,s)\}=m(u,t).
\end{align*}
Thus, $Z$, defined by~\eqref{f_process_z}, satisfies $(A1)-(A4)$.
\end{proof}

So, we see that interpreting $Z(u,t)$ as the position of the particle at time $t$ starting from $u$, the family of processes $\{Z(u,\cdot), u\in A\}$ is a description of the system of particles which start from almost all points of $\supp\mu$ with the mass distribution $\mu$. Although it seems that $Z$ gives a simpler description of the model, it is easier to work with the process $X$. Firstly, the values of the random variable $X(\cdot,t)$ are functions defined on the interval $(0,b)$, where the interval is independent of the support of the initial distribution $\mu$ (it only depends on the total mass of the system). Consequently, the particle system can be approximated by finite subsystems on the same state space. Secondly, $X$ is an $L_2^{\uparrow}$-valued continuous martingale with the quadratic variation $\langle X\rangle_t=\int_0^t\pr_{X(s)}ds$, where $L_2^{\uparrow}$ is the set of all non-decreasing functions from $L_2$ and $\pr_fh$ denotes the projection of $h$ in $L_2$ on the subspace of $\sigma(f)$-measurable functions. Moreover, we will show that each $L_2^{\uparrow}$-valued continuous martingale $\widehat{X}$ with the quadratic variation $\langle \widehat{X}\rangle_t=\int_0^t\pr_{\widehat{X}(s)}ds$ has a modification that satisfies the same properties as $X$ (see~Theorem~\ref{thm_conditions_C}). Thus, to construct the modified Arratia flow it is enough to construct an $L_2^{\uparrow}$-valued continuous martingale with the needed quadratic variation.

The second main result of the paper is a relationship between local properties of the distribution of particle mass at the start and asymptotic behavior of individual particles and its masses for small time. Using estimations of the expectations of particle mass and particle diffusion rate (see Section~\ref{mass_estimations}) and also the law of the iterated logarithm for the Wiener process, we prove the following statements.

\begin{thm}\label{prp_LIL_1}
 Let $\alpha>\frac{1}{2}$, $u_0\in(0,1)$ and there exist $C>0$ and $\delta>0$ such that the following assumptions hold
 \begin{enumerate}
  \item[(i)] $|g(u-\varsigma)-g(u)|\leq C |u-u_0|^{(\alpha-1)\vee 0}|\varsigma|^{\alpha\wedge 1}$ for all $u\in[u_0-\delta,u_0+\delta]$ and all $\varsigma$ between $0$ and $u-u_0$;

  \item[(ii)] $|g(u)-g(u_0)|\geq \frac{1}{C}|u-u_0|^{\alpha}$ for all $u\in[u_0-\delta,u_0+\delta]$.
 \end{enumerate}
 Then for all $\epsilon>0$
 \begin{align}
  \p\left\{\lim_{t\to 0}\frac{m(u_0,t)}{t^{\frac{1}{2\alpha+1}}\left(\ln\frac{1}{t}\right)^{1+\epsilon}}=0\right\}=1, \label{f_LIL_m_1}\\
  \p\left\{\varlimsup_{t\to 0}\frac{|X(u_0,t)-g(u_0)|}{t^{\frac{\alpha}{2\alpha+1}}\left(\ln\frac{1}{t}\right)^{-\frac{1}{2}-\epsilon}}=+\infty\right\}=1.\label{f_LIL_Y_1}
 \end{align}
\end{thm}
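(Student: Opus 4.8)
The plan is to deduce both \eqref{f_LIL_m_1} and \eqref{f_LIL_Y_1} from a single quantitative input, the bound on the expected mass of the particle issued from $u_0$,
\[
\E\, m(u_0,t)\le C_1\, t^{\frac{1}{2\alpha+1}},\qquad 0<t\le t_0,
\]
of the type furnished by Section~\ref{mass_estimations} under hypotheses (i)--(ii); it is assumption (ii), forcing the neighbours of $u_0$ to start at distance at least $\frac1C|u-u_0|^\alpha$, that makes the cluster accumulate mass slowly and produces the exponent $\frac1{2\alpha+1}$. This exponent is the one dictated by the self-consistent scaling: by (C4) the clock of $X(u_0,\cdot)$ is $\int_0^t ds/m(u_0,s)$, so a displacement $x$ over time $t$ behaves like $x\sim\sqrt{\int_0^t ds/m(u_0,s)}$, while (ii) forces $m\sim x^{1/\alpha}$; eliminating $x$ gives $m(u_0,t)\sim t^{1/(2\alpha+1)}$ and $x\sim t^{\alpha/(2\alpha+1)}$, the leading powers in \eqref{f_LIL_m_1}--\eqref{f_LIL_Y_1}. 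The two logarithmic corrections will come, respectively, from a Borel--Cantelli summation and from the law of the iterated logarithm.

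To prove \eqref{f_LIL_m_1}, fix $\epsilon>0$, set $\phi(t)=t^{1/(2\alpha+1)}(\ln\frac1t)^{1+\epsilon}$, and work along the grid $t_n=2^{-n}$. By Markov's inequality and the mass bound, for every $\eta>0$,
\[
\p\{m(u_0,t_n)>\eta\,\phi(t_n)\}\le\frac{\E\,m(u_0,t_n)}{\eta\,\phi(t_n)}\le\frac{C_1}{\eta\,(n\ln 2)^{1+\epsilon}},
\]
which is summable in $n$ because $1+\epsilon>1$. Borel--Cantelli then gives, almost surely, $m(u_0,t_n)\le\eta\,\phi(t_n)$ for all large $n$; intersecting the resulting full-measure events over $\eta=\frac1k$, $k\in\N$, yields $m(u_0,t_n)/\phi(t_n)\to0$ a.s. Since $t\mapsto m(u_0,t)$ is non-decreasing (the set in $m(u_0,t)=\leb\{w:\exists s\le t,\ X(u_0,s)=X(w,s)\}$ only grows with $t$) and $\sup_{t\in[t_{n+1},t_n]}\phi(t_n)/\phi(t)$ stays bounded, a routine monotone interpolation between consecutive $t_n$ promotes this to $\lim_{t\to0}m(u_0,t)/\phi(t)=0$, i.e.\ \eqref{f_LIL_m_1}.

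To prove \eqref{f_LIL_Y_1}, note that by (C1)--(C2) the process $X(u_0,\cdot)-g(u_0)$ is a continuous martingale vanishing at $0$ whose clock $V(t):=\langle X(u_0,\cdot)\rangle_t=\int_0^t ds/m(u_0,s)$ is, by (C4), continuous and strictly increasing; hence by the Dambis--Dubins--Schwarz theorem $X(u_0,t)-g(u_0)=W(V(t))$ for a Brownian motion $W$. Running the scheme of the previous paragraph with auxiliary exponent $\epsilon'=\epsilon$ gives, a.s., $m(u_0,s)\le s^{1/(2\alpha+1)}(\ln\frac1s)^{1+\epsilon}$ for all small $s$, whence
\[
V(t)\ge\int_0^t\frac{ds}{s^{1/(2\alpha+1)}(\ln\frac1s)^{1+\epsilon}}\ge c\,t^{\frac{2\alpha}{2\alpha+1}}\Big(\ln\tfrac1t\Big)^{-(1+\epsilon)}
\]
for small $t$, the last inequality being the elementary asymptotics of the integral (recall $\frac{1}{2\alpha+1}<1$ and that the logarithm is slowly varying). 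The law of the iterated logarithm for $W$ at the origin provides a random sequence $\tau_k\downarrow0$ with $W(\tau_k)\ge\frac12\sqrt{2\tau_k\ln\ln\frac1{\tau_k}}$; putting $t_k=V^{-1}(\tau_k)\downarrow0$, so that $|X(u_0,t_k)-g(u_0)|=W(\tau_k)$ and $\tau_k=V(t_k)$, and inserting the lower bound for $V(t_k)$, the quotient in \eqref{f_LIL_Y_1} at $t=t_k$ is at least a constant multiple of
\[
(\ln\tfrac1{t_k})^{\frac12+\epsilon-\frac{1+\epsilon}2}\sqrt{\ln\ln\tfrac1{\tau_k}}=(\ln\tfrac1{t_k})^{\epsilon/2}\sqrt{\ln\ln\tfrac1{\tau_k}}\to+\infty
\]
as $k\to\infty$. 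Hence the $\varlimsup$ of that quotient is $+\infty$ a.s., which is \eqref{f_LIL_Y_1}.

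The genuine difficulty lies not in this theorem but in its single ingredient, the expected-mass bound of Section~\ref{mass_estimations}: making rigorous the self-consistent loop between accumulated mass and displacement requires controlling the cluster $[l(t),r(t)]$ of indices coalesced with $u_0$ through the monotonicity (C3) and the quadratic-variation structure (C1),(C4), and feeding in the two-sided control of $g$ from (i)--(ii). Within the present argument the only delicate point is the accounting of logarithms in the last display: the law of the iterated logarithm contributes a $\ln\ln$, the time change turns powers of $t$ into powers of $\ln\frac1t$, and one must spend the slack in the auxiliary exponent $\epsilon'$ (available because \eqref{f_LIL_m_1} holds for every positive exponent) to leave a strictly positive power of $\ln\frac1t$.
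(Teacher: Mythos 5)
Your proposal is correct and follows essentially the same route as the paper: Markov plus Borel--Cantelli along a geometric grid with monotone interpolation for \eqref{f_LIL_m_1}, and the Dambis--Dubins--Schwarz time change combined with the lower bound on $\langle X(u_0,\cdot)\rangle_t$ coming from \eqref{f_LIL_m_1} and the law of the iterated logarithm for \eqref{f_LIL_Y_1}, all resting on the expected-mass bound $\E m(u_0,t)\leq C' t^{\frac{1}{2\alpha+1}}$ of Proposition~\ref{prp_estimation_Em}. The only cosmetic difference is in the final bookkeeping of \eqref{f_LIL_Y_1}: you retain the $\ln\ln$ factor and a surviving power $(\ln\frac{1}{t})^{\epsilon/2}$ to reach $+\infty$ directly, while the paper discards the $\ln\ln$ and instead invokes the arbitrariness of $\epsilon$ --- both are valid.
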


\begin{thm}\label{prp_LIL_2}
 Let $u_0\in(0,1)$, $\alpha>\frac{1}{2}$ and there exist $\delta>0$ and $C>0$ such that $g(u_0+u)-g(u_0)\leq Cu^{\alpha}$, $u\in[0,\delta]$, or $g(u_0)-g(u_0-u)\leq Cu^{\alpha}$, $u\in[0,\delta]$. Then for all $\epsilon>0$
 \begin{align*}
  \p\left\{\lim_{t\to 0}\frac{m(u_0,t)}{t^{\frac{1}{2\alpha+1}}\left(\ln\frac{1}{t}\right)^{-1-\epsilon}}=+\infty\right\}=1,\\
  \p\left\{\lim_{t\to 0}\frac{|X(u_0,t)-g(u_0)|}{t^{\frac{\alpha}{2\alpha+1}}\left(\ln\frac{1}{t}\right)^{\frac{1}{2}+\epsilon}}=0\right\}=1.
 \end{align*}
\end{thm}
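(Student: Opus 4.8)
The plan is to establish the two assertions in order: first the lower bound on the mass $m(u_0,t)$, and then to deduce the upper bound on the displacement $|X(u_0,t)-g(u_0)|$ from it through the time-change representation. Throughout I argue under the right-sided hypothesis $g(u_0+u)-g(u_0)\le Cu^{\alpha}$; the left-sided case follows by reflecting the labels about $u_0$. The two structural facts I would lean on are that $t\mapsto m(u_0,t)$ is non-decreasing (clusters only grow) and that, by $(C4)$ with $u=v$ (so $\tau_{u_0,u_0}=0$), $\langle X(u_0,\cdot)\rangle_t=\int_0^t m(u_0,s)^{-1}ds$. Since both $m(u_0,\cdot)$ and $h(t):=t^{1/(2\alpha+1)}(\ln\tfrac1t)^{-1-\epsilon}$ are monotone near $0$, it suffices to control $m(u_0,t_n)/h(t_n)$ along a geometric sequence $t_n=\rho^n$ and then interpolate by monotonicity.

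Fixing $K\in\N$ and writing $h'=Kh(t)$, I would bound $\p\{m(u_0,t)<h'\}$ by a Brownian hitting-time estimate. If $m(u_0,t)<h'$, the cluster of $u_0$ has label-width $<h'$, so $\tau_{u_0,u_0+h'}>t$; moreover $m(u_0,s)\le m(u_0,t)<h'$ for $s\le t$, whence $\langle X(u_0,\cdot)\rangle_t\ge t/h'$. For $Y:=X(u_0+h',\cdot)-X(u_0,\cdot)\ge 0$ the cross variation in $(C4)$ vanishes before coalescence, so $\langle Y\rangle_t=\langle X(u_0+h',\cdot)\rangle_t+\langle X(u_0,\cdot)\rangle_t\ge t/h'$. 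By Dambis--Dubins--Schwarz, $Y_s=\beta_{\langle Y\rangle_s}$ for a Brownian motion $\beta$ started at the deterministic value $a=g(u_0+h')-g(u_0)\le C(h')^{\alpha}$ and absorbed at $0$ at its hitting time $H_0$, with $\{\tau_{u_0,u_0+h'}>t\}=\{\langle Y\rangle_t<H_0\}$. Thus $\{m(u_0,t)<h'\}\subseteq\{H_0>t/h'\}$, and the reflection principle yields $\p\{H_0>t/h'\}=\p\{|N|<a\sqrt{h'/t}\}\le\sqrt{2/\pi}\,a\sqrt{h'/t}$ for a standard normal $N$. The arithmetic now closes the argument: since $(\alpha+\tfrac12)/(2\alpha+1)=\tfrac12$, the power of $t$ cancels and $a\sqrt{h'/t}\le CK^{\alpha+1/2}(\ln\tfrac1t)^{-(\alpha+1/2)(1+\epsilon)}$, which along $t_n=\rho^n$ is of order $n^{-(\alpha+1/2)(1+\epsilon)}$; this is summable because $\alpha>\tfrac12$ forces $(\alpha+\tfrac12)(1+\epsilon)>1$. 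Borel--Cantelli gives, for each $K$, that a.s. $m(u_0,t_n)\ge Kh(t_n)$ eventually, and letting $K\to\infty$ and interpolating proves the first assertion.

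For the displacement I would feed the mass bound back into the quadratic variation. Almost surely, eventually $m(u_0,s)\ge s^{1/(2\alpha+1)}(\ln\tfrac1s)^{-1-\epsilon_1}$, so that $\langle X(u_0,\cdot)\rangle_t=\int_0^t m(u_0,s)^{-1}ds\le C'\,t^{2\alpha/(2\alpha+1)}(\ln\tfrac1t)^{1+\epsilon_1}$, the integral being convergent (as $1/(2\alpha+1)<1$) and dominated by its endpoint. Writing $X(u_0,t)-g(u_0)=W(\langle X(u_0,\cdot)\rangle_t)$ by Dambis--Dubins--Schwarz and bounding by the running maximum of $W$ on $[0,\bar\tau_t]$ with $\bar\tau_t=C'\,t^{2\alpha/(2\alpha+1)}(\ln\tfrac1t)^{1+\epsilon_1}$, the law of the iterated logarithm for the maximum process gives $\sup_{r\le\bar\tau_t}|W(r)|$ of order $\sqrt{2\bar\tau_t\ln\ln(1/\bar\tau_t)}$. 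Dividing by $t^{\alpha/(2\alpha+1)}(\ln\tfrac1t)^{1/2+\epsilon}$ leaves a factor of order $(\ln\tfrac1t)^{\epsilon_1/2-\epsilon}\sqrt{\ln\ln\tfrac1t}$, which tends to $0$ as soon as $\epsilon_1<2\epsilon$; since the mass bound is available for every $\epsilon_1>0$, this yields the second assertion.

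The main obstacle is the mass lower bound, and within it the delicate point is the reduction of non-coalescence to a Brownian hitting-time probability: one must check that on $\{m(u_0,t)<h'\}$ the accumulated variation $\langle Y\rangle_t$ is bounded below by the deterministic quantity $t/h'$ (this is precisely where monotonicity of the mass enters) while the initial gap $a$ is bounded above by the one-sided Hölder hypothesis. The exact cancellation $(\alpha+\tfrac12)/(2\alpha+1)=\tfrac12$ is what converts the estimate into a purely logarithmic probability, explaining why $-1-\epsilon$ (respectively $\tfrac12+\epsilon$) is the natural threshold, and keeping the two parameters independent through $\epsilon_1<2\epsilon$ is the only bookkeeping required to pass from the mass bound to the displacement bound.
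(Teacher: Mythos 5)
Your proof is correct, and its skeleton --- a probability bound on $\{m(u_0,t)<r\}$ at the critical scale, Borel--Cantelli along a geometric sequence $t_n$ combined with monotonicity of $m(u_0,\cdot)$ and of the comparison function, then the Dambis--Dubins--Schwarz representation plus the law of the iterated logarithm to convert the mass bound into the displacement bound --- is the same as the paper's, whose proof of Theorem~\ref{prp_LIL_2} is literally ``Proposition~\ref{prp_estimation_of_diff} plus the argument of Theorem~\ref{prp_LIL_1}.'' Where you genuinely deviate is the source of the probability estimate for the mass. The paper first establishes the moment bound $\E\, m(u_0,t)^{-1}\le C' t^{-1/(2\alpha+1)}$ (Proposition~\ref{prp_estimation_of_diff}, obtained by integrating the tail bound of Lemma~\ref{lemma_estimation_of_p_m} over all scales $r$ via the inverse function $r_t(x)$) and then applies Chebyshev to $m(u_0,t_n)^{-1}$, giving summands of order $n^{-1-\epsilon}$. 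You instead apply the tail bound of Lemma~\ref{lemma_estimation_of_p_m} (which you re-derive correctly rather than cite) once, at the single scale $r=K\,t^{1/(2\alpha+1)}(\ln\frac1t)^{-1-\epsilon}$, where the identity $(\alpha+\frac12)/(2\alpha+1)=\frac12$ cancels the power of $t$ and leaves $O\bigl((\ln\frac1t)^{-(1+\epsilon)(\alpha+1/2)}\bigr)$; since $\alpha+\frac12>1$ this is summable along $t_n=\rho^n$ even for $\epsilon=0$, so your route bypasses Proposition~\ref{prp_estimation_of_diff} entirely and in fact yields a marginally sharper mass estimate. The trade-off is that the paper's moment bound is not wasted effort: it is reused to control $\xi_\rho(0)$ in the proof of Proposition~\ref{prp_estimation_Em}, which Theorem~\ref{prp_LIL_1} needs. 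The second half of your argument (the quadratic-variation bound $\langle X(u_0,\cdot)\rangle_t\lesssim t^{2\alpha/(2\alpha+1)}(\ln\frac1t)^{1+\epsilon_1}$, the running-maximum LIL, and the bookkeeping $\epsilon_1<2\epsilon$) coincides with the paper's.
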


\begin{rem}
 If $\alpha\geq 1$ and $g$ is differentiable in a neighborhood of $u_0$  with
 $$
 \lim_{u\to u_0}\frac{g'(u)}{|u-u_0|^{\alpha-1}}=C\in(0,\infty),
 $$
 then $g$ satisfies assumptions $(i)$, $(ii)$ of Theorem~\ref{prp_LIL_1}.
\end{rem}

\begin{rem}
 If $\alpha>\frac{1}{2}$ and
 $$
 g(u)=\sgn(u-u_0)|u-u_0|^{\alpha},\quad u\in(0,1),
 $$
 then $g$ also satisfies assumptions $(i)$, $(ii)$ of Theorem~\ref{prp_LIL_1}.
\end{rem}

In particular, theorems~\ref{prp_LIL_1},~\ref{prp_LIL_2} imply that the modified Arratia flow constructed in~\cite{Konarovskyi:2014:arx} (where $g(u)=u$, $u\in[0,1]$) has the following behavior
 \begin{align*}
  \p\left\{\lim_{t\to 0}\frac{m(u,t)}{\sqrt[3]{t}\left(\ln\frac{1}{t}\right)^{1+\epsilon}}=0\right\}
  &=\p\left\{\lim_{t\to 0}\frac{m(u,t)}{\sqrt[3]{t}\left(\ln\frac{1}{t}\right)^{-1-\epsilon}}=+\infty\right\}=1,\\
  \p\left\{\lim_{t\to 0}\frac{|X(u,t)-u|}{\sqrt[3]{t}\left(\ln\frac{1}{t}\right)^{\frac{1}{2}+\epsilon}}=0\right\}
  &=\p\left\{\varlimsup_{t\to 0}\frac{|X(u,t)-u|}{\sqrt[3]{t}\left(\ln\frac{1}{t}\right)^{-\frac{1}{2}-\epsilon}}=+\infty\right\}=1
 \end{align*}
for all $u\in(0,1)$ (see Remark~\ref{rem_asymprotic_bahavior_of_Z}). 

We note that the asymptotic behavior of each particle in the Arratia flow~$\{a(u,t),\ u\in\R,\ t\geq 0\}$ is as follows
$$
\p\left\{\varlimsup_{t\to 0}\frac{|a(u,t)-u|}{\sqrt{2t\ln\ln\frac{1}{t}}}=1\right\}=1,
$$
since each process $a(u,\cdot)$ is a Brownian motion with unit diffusion rate. Moreover, the process $\nu(t)=\leb\{u:\ \exists s\leq t\ \ a(u,s)=a(0,s)\},\ t\geq 0$, that describes the cluster size (it corresponds to the particle mass in our case), has the following behavior~\cite{Vovchanskii:2012}
\begin{align*}
\p\left\{\varlimsup\limits_{t\to 0}\frac{\nu(t)}{\sqrt{2t\,\ln\ln\frac{1}{t}}}\geq 1\right\}=1,\\
\p\left\{\varlimsup\limits_{t\to 0}\frac{\nu(t)}{2\sqrt{t\,\ln\ln\frac{1}{t}}}\leq 1\right\}=1.
\end{align*}
Comparing the behavior of particles and their masses in the modified Arratia flow with the behavior of particles in the Arratia flow, we see that asymptotics are completely different, since the diffusion rates of particles in the first case grow to infinity and make particles to fluctuate more and more intensively for small time.

Here we would like to note that many methods which work for studying of local properties of the Arratia flow do not work in our case, since they are based on the fact that every system of particles can be considered separately from the whole system. Therefore, the Arratia flow can be investigated just by studying of its finite subsystems (see, e.g~\cite{Ostapenko:2010,Shamov:2011,Chernega:2012,Dorogovtsev:2014}). There is an opposite situation for studying of the modified Arratia flow, where every finite subsequence cannot be considered as a separate system.

The modified Arratia flow has a connection with the Wasserstein diffusion, constructed by M.-K.~von~Renessse and T.~Sturm in~\cite{Renesse:2009} (see also~\cite{Andres:2010,Stannat:2013,Sturm:2014}). In fact, in~\cite{Konarovskyi_LDP:2015} V.~Konarovskyi and M.-K.~von~Renesse proved that the process describing the evolution of particle mass in the modified Arratia flow solves a SPDE that is similar to the SPDE for the Wasserstein diffusion and also showed via a large deviation analysis that the flow satisfies the Varadhan formula with the square of the Wasserstein distance as the rate function.
Namely, if $\{X(u,t),\ u\in[0,1],\ t\in[0,T]\}$ satisfies $(C1)-(C4)$ with $g(u)=u$, $u\in[0,1]$, then the process
$$
\mu_t=X(\cdot,t)_{\#}\leb\big|_{[0,1]},\quad t\in[0,T],
$$
is a weak solution to the equation
$$
d\mu_t =  \Gamma(\mu_t)dt + \mathop{{\rm div}}( \sqrt {\mu_t} dW_t),
$$
where $\Gamma(\nu)$ is defined on test functions as follows $(f,\Gamma(\nu))=\sum_{x \in \mathop{supp}(\nu) } f''(x).$
Moreover, for suitable sets $A \subset \Pp(\R)$ we have
$$
 \lim_{\eps \to 0} \eps\log\p\{\mu_\eps\in A\}=
- \frac {d^2_{\mathcal W}\left(\leb\big|_{[0,1]},A\right)}{2},
$$
where $d_{\mathcal W}$ denotes the (quadratic) Wasserstein metric on the space of probability measures on $\R$. Basically we believe that the same form of the short time behavior of the particle system with the initial particle distribution $\mu$ is valid for any probability measure $\mu$ (with $\int_{\R}|x|^{2+\eps}\mu(dx)<\infty$) instead of $\leb\big|_{[0,1]}$. Thus, the process constructed in the present paper can be considered as a candidate for an intrinsic Brownian motion on the Wasserstein space of probability measures. Consequently, the question of existence such a process is important and its local properties is of interest.

\subsection{Organization of the article.}

In section~\ref{the_main_definitions} we introduce the main notation and formulate some statements about $L_2^{\uparrow}$-valued continuous martingales. In section~\ref{finite_system} a finite system of particles is defined as a continuous martingale taking values in $L_2^{\uparrow}$. The main estimations for the particle system is obtained in section~\ref{estimations}. Section~\ref{construction} is devoted to the construction of an $L_2^{\uparrow}$-valued continuous martingale $X$ which starts from a function $g\in L_{2+\eps}$ and has the quadratic variation $\langle X\rangle_t=\int_0^t\pr_{X(s)}ds$. In section~\ref{modification_from_D} we prove that the martingale $X$ has a modification from the Skorohod space $D((0,b),C[0,T])$ that satisfies similar properties as the flow constructed in~\cite{Konarovskyi:2014:arx}. Section~\ref{mass_estimations} is the key section of the paper. There we obtain estimations of the expectations of mass and diffusion rate of individual particles which allow to state the asymptotic behavior of the particle system in section~\ref{asymptotic_behavior}.

\section{The main definitions}\label{the_main_definitions}

\subsection{Some notation}
For $p\geq 1$ we denote the space of $p$-integrable functions (more precisely equivalence classes) from $[a,b]$ to $\R$ by $L_p[a,b]$ or $L_p$ and $\|\cdot\|_{L_p}$ is the usual norm on $L_p$. Also $(\cdot,\cdot)$ denotes the inner product in $L_2[a,b]$. Let $D^{\uparrow}[a,b]$ or $D^{\uparrow}$ be the set of c\`{a}dl\'{a}g non-decreasing functions from $[a,b]$ into $\Rr=\R\cup\{-\infty,+\infty\}$. For convenience we assume that all functions from $D^{\uparrow}$ are continuous at $b$. Let $L_2^{\uparrow}[a,b]$ or shortly $L_2^{\uparrow}$ be the subset of $L_2[a,b]$ that contains functions (their equivalence classes) from $D^{\uparrow}$, i.e $f\in L_2[a,b]$ belongs to $L_2^{\uparrow}$ if there exists $g\in D^{\uparrow}$ such that $f=g$ a.e. Set $L_p^{\uparrow}=L_2^{\uparrow}\cap L_p$, $p\geq 2$.

Note that $L_p^{\uparrow}$ is a closed subset of $L_p$ (see Corollary~\ref{cor_clasability}). Consequently, $L_p^{\uparrow}$ is a Polish space with respect to the distance induced by $\|\cdot\|_{L_p}$.

Since each function $f$ from $L_p^{\uparrow}$ has a unique modification from $D^{\uparrow}$ (see Remark~\ref{rem_unique_modification}), considering $f$ as a map from $[a,b]$ to $\Rr$, we always take its modification from $D^{\uparrow}$.

For each $f\in L_p^{\uparrow}$, let $\Pi_f$ denote the class of sets $\{v\in[a,b]:\ f(v)=c\}$, $c\in\R$, of the positive length (as we agreed, $f\in D^{\uparrow}$). Since $f$ is a non-decreasing function, elements of $\Pi_f$ are intervals $[c,d)$, $a\leq c<d<b$, and $[c,d]$, $a\leq c<d\leq b$. Moreover, $\Pi_f$ is finite or countable. If $\Pi_f$ is finite and $\bigcup\Pi_f=[a,b]$, then $|\Pi_f|$ denotes the number of elements in $\Pi_f$, otherwise $|\Pi_f|=+\infty$. Let us introduce the partial order for $\Pi_{\cdot}$. We write $\Pi_g\leq\Pi_f$ if for each $\pi\in\Pi_f$ there exists $\pi'\in\Pi_g$ such that $\Int\pi\subseteq\pi'$, where $\Int\pi$ denotes the interior of $\pi$. Let $|\pi|$ denote the length of $\pi$ for $\pi\in\Pi_f$.
\begin{rem}\label{rem_size_of_Pi}
From definitions of $\Pi_{\cdot}$ and $|\Pi_{\cdot}|$ it follows that the inequality $\Pi_g\leq\Pi_f$ implies $|\Pi_g|\leq|\Pi_f|$.
\end{rem}

If $|\Pi_f|<\infty$, then $f$ is called the \textit{step function} ($f$ takes a finite number of values). The set of all step functions (from $D^{\uparrow}$) we denote by $\St$. If $f$ is a step function, then $\Pi_f=\{\pi_1,\ldots,\pi_n\}$, where $\pi_1=[a,a_1),\ \pi_2=[a_1,a_2),\ \ldots\ \pi_{n-1}=[a_{n-2},a_{n-1}),\ \pi_n=[a_{n-1},b]$ for some $a<a_1<\ldots<a_{n-1}<b$. In this case $f=\sum_{k=1}^nx_k\I_{\pi_k}$ for some $x_1<\ldots<x_n$, where $\I_A$ denotes the characteristic function of the set $A$. Henceforth, for $f\in\St$ we numerate elements of $\Pi_f$ in increasing order i.e. writing $\Pi_f=\{\pi_1,\ldots,\pi_n\}$, we mean that elements of $\pi_k$ is less then elements of $\pi_{k+1}$ for all $k\in [n-1]$, where $[n]=\{1,\ldots,n\}$.

\subsection{$L_2^{\uparrow}$-valued martingales}

Let $(\F_t)_{t\in[0,T]}$ be a right continuous filtration on a probability space $(\Omega,\F,\p)$. An $L_2^{\uparrow}$-valued continuous random process $X(t)$, $t\in[0,T]$, given on $(\Omega,\F,\p)$, is called an $(\F_t)$-\textit{square integrable martingale} if it is $(\F_t)$-adapted, $\E\|X(t)\|^2_{L_2}<\infty$, $t\in[0,T]$, and for each $0\leq s<t\leq T$,
$$
\E\left(X(t)|\F_s\right)=X(s).
$$

Let $\E\|X(t)\|^2_{L_2}<\infty$ for all $t\in[0,T]$. Since $L_2^{\uparrow}$ is a subset of the separable Hilbert space $L_2$, $X(t)$, $t\in[0,T]$, is an $(\F_t)$-square integrable martingale if and only if for any $h\in L_2$, $(X(t),h)$, $t\in[0,T]$, is an $(\F_t)$-martingale.

If the filtration $(\F_t)_{t\in[0,T]}$ is generated by $X$, i.e. $\F_t=\bigcap_{\eps>0}\sigma((X(s),h),\ s\leq t+\eps,\ h\in L_2)$, $t\in[0,T)$, and $\F_T=\sigma((X(s),h),\ s\leq T,\ h\in L_2)$, then we will call $X$ just a square integrable martingale.

It is well-known that two real-valued continuous martingales $x_1(t)$, $x_2(t)$, $t\in[0,T]$, satisfying $x_1(t)\leq x_2(t)$ for all $t\in[0,T]$ coincide after their meeting. This property implies that $\Pi_{X(t)}$, $t\geq 0$, decreases a.s.

\begin{prp}\label{prp_coalescing}
Let $X(t)$, $t\in[0,T]$, be an $L_2^{\uparrow}[a,b]$-valued continuous $(\F_t)$-square integrable martingale. Then
$$
\p\left\{\mbox{for all}\  s\leq t,\ \Pi_{X(t)}\leq\Pi_{X(s)} \right\}=1.
$$
\end{prp}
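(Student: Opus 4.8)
The plan is to avoid evaluating $X(t)$ pointwise in a way that would require a joint measurable version, and instead to test $X(t)$ against indicator functions of adjacent intervals, thereby reducing the claim to the cited coalescing property of ordered real-valued continuous martingales. Fix rationals $a<p<q<r<b$ and set, for $t\in[0,T]$,
$$
A_1(t)=\frac{1}{q-p}\left(X(t),\I_{[p,q]}\right),\qquad A_2(t)=\frac{1}{r-q}\left(X(t),\I_{[q,r]}\right),
$$
the normalized averages of $X(\cdot,t)$ over the two intervals. Since $(X(t),h)$ is a continuous $(\F_t)$-martingale for every $h\in L_2$ and $X$ is $L_2$-continuous, each $A_i$ is a continuous square integrable real-valued $(\F_t)$-martingale (take $h=\I_{[p,q]}/(q-p)$, respectively $h=\I_{[q,r]}/(r-q)$). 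Because $X(t)\in L_2^{\uparrow}$ is non-decreasing in the spatial variable, $A_1(t)\le X(q-,t)\le X(q,t)\le A_2(t)$, so $A_1(t)\le A_2(t)$ for each fixed $t$; evaluating at rational $t$ and using continuity upgrades this to $A_1(t)\le A_2(t)$ for all $t\in[0,T]$ almost surely.

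The arithmetic core is the equivalence between equal averages and constancy. If $A_1(t)=A_2(t)$, the squeezing above forces $A_1(t)=X(q-,t)=X(q,t)=A_2(t)$; since $X(q-,t)-X(\cdot,t)\ge 0$ on $[p,q]$ with vanishing integral, $X(\cdot,t)$ equals $X(q-,t)$ almost everywhere on $[p,q]$ and, symmetrically, equals $X(q,t)$ almost everywhere on $[q,r]$, and as these two constants agree, $X(\cdot,t)$ is constant on $(p,r)$. I would then invoke the \emph{well-known fact} quoted before the proposition that two ordered continuous martingales coincide after their first meeting: letting $\sigma_{p,q,r}=\inf\{t:A_1(t)=A_2(t)\}$, we obtain $A_1(t)=A_2(t)$, and hence constancy of $X(\cdot,t)$ on $(p,r)$, for all $t\ge\sigma_{p,q,r}$.

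Finally I would assemble a single null set. Since there are only countably many rational triples $(p,q,r)$, there is an event $\Omega_0$ of full probability on which, for every such triple, $A_1\le A_2$ holds for all $t$ and $A_1=A_2$ holds after $\sigma_{p,q,r}$. On $\Omega_0$ fix $s\le t$ and $\pi\in\Pi_{X(s)}$ with $\Int\pi=(c,d)$. For every rational triple $p<q<r$ contained in $(c,d)$, constancy of $X(\cdot,s)$ on $[p,r]$ gives $A_1(s)=A_2(s)$, so $\sigma_{p,q,r}\le s\le t$ and therefore $X(\cdot,t)$ is constant on $(p,r)$; taking the union over all such triples shows $X(\cdot,t)$ is constant on $(c,d)$. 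Consequently $(c,d)$ lies in the constancy interval $\pi'\in\Pi_{X(t)}$ carrying that common value (which has positive length since it contains $(c,d)$), that is $\Int\pi\subseteq\pi'$, which is exactly $\Pi_{X(t)}\le\Pi_{X(s)}$. I expect the main obstacle to be precisely this passage from the averaged functionals back to genuine constancy of $X(\cdot,t)$ on an interval, together with the bookkeeping that makes the conclusion uniform over all $s\le t$ and all flat pieces through one countable family; the continuity and martingale properties of the $A_i$ and the application of the coalescing lemma are then routine.
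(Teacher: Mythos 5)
Your proof is correct and follows essentially the same route as the paper: test $X(t)$ against a countable family of normalized indicators of ordered intervals, apply the coalescence property of ordered real-valued continuous martingales (Revuz--Yor) to each pair, intersect over the countable family, and translate equality of the averaged functionals back into the statement $\Pi_{X(t)}\leq\Pi_{X(s)}$. The only cosmetic difference is that you use adjacent intervals sharing the endpoint $q$ and carry out the ``equal averages $\Leftrightarrow$ local constancy'' translation by hand, whereas the paper allows a gap between the two test intervals and packages that step into Lemma~\ref{lemma_conditions_for_inequality_for_Pi}.
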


For a Polish space $E$, let $C([0,T],E)$ denote the space of $E$-valued continuous functions on $[0,T]$ with the usual uniform norm $\|\cdot\|_C$. For $E=\R$ we use the notation $C[0,T]$. The set of right continuous $C[0,T]$-valued functions on $(a,b)$ which have left limits is denoted by $D((a,b),C[0,T])$.

\begin{prp}\label{prp_modif_from_D_for_mart}
Let $X(t)$, $t\in[0,T]$, be an $L_2^{\uparrow}[a,b]$-valued continuous $(\F_t)$-square integrable martingale such that for each $t\in(0,T]$, $X(t)\in\St$ a.s.
Then $X$ has a modification from $D((a,b),C[0,T])$, that is, there exists $C[0,T]$-valued random process $\widetilde{X}(u,\cdot)$, $u\in(a,b)$, with trajectories from $D((a,b),C[0,T])$ such that for all $t\in[0,T]$, $X(t)=\widetilde{X}(\cdot,t)$ (in $L_2$) a.s. Moreover, for each $u\in(a,b)$, $\widetilde{X}(u,\cdot)$ is a continuous $(\F_t)$-square integrable martingale and
\begin{equation}\label{f_coalescing}
\p\{\forall u,v\in(a,b)\ \forall s\in[0,T]\ \widetilde{X}(u,s)=\widetilde{X}(v,s)\ \mbox{implies } \widetilde{X}(u,t)=\widetilde{X}(v,t),\ \forall t\geq s\}=1.
\end{equation}
\end{prp}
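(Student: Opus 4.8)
The plan is to build the modification from the ``averaged'' real martingales and then exploit monotonicity in the space variable together with the step--function hypothesis to obtain the Skorohod regularity. For $u\in(a,b)$ and small $\delta>0$ set
$$
Y_\delta(u,t)=\frac1\delta\left(X(t),\I_{[u,u+\delta)}\right),\qquad t\in[0,T].
$$
Because $X$ is an $L_2^{\uparrow}$-valued continuous square integrable martingale, for every fixed $h\in L_2$ the real process $(X(\cdot),h)$ is a continuous $(\F_t)$-martingale; hence each $Y_\delta(u,\cdot)$ is a continuous $(\F_t)$-martingale, and $Y_\delta(u,t)\le Y_\delta(v,t)$ for $u\le v$, since the $D^{\uparrow}$-representative of $X(t)$ is non-decreasing and shifting the averaging window to the right cannot decrease the average.

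First I would fix a countable dense set $Q\subset(a,b)$ consisting of points $u$ for which $\E X(u,T)^2<\infty$ (a.e.\ $u$ qualifies because $\E\|X(T)\|_{L_2}^2=\E\int_a^b X(w,T)^2\,dw<\infty$). For $u\in Q$ the right--continuity of the $D^{\uparrow}$-representative gives $Y_\delta(u,T)\downarrow X(u,T)$ as $\delta\downarrow0$, monotonically and under an $L_2(\Omega)$ domination, so $Y_\delta(u,T)$ is Cauchy in $L_2(\Omega)$. Doob's $L_2$-inequality applied to the continuous martingale $Y_\delta(u,\cdot)-Y_{\delta'}(u,\cdot)$ turns this into Cauchyness in $L_2(\Omega,C[0,T])$; passing to a subsequence I obtain, on one event of full probability, a uniform-in-$t$ limit $\widetilde X(u,\cdot)$ for every $u\in Q$ simultaneously. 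Each $\widetilde X(u,\cdot)$ is then a continuous square integrable $(\F_t)$-martingale (an $L_2$-limit of such), it satisfies $\widetilde X(\cdot,t)=X(t)$ in $L_2$ for each $t$, and the order $\widetilde X(u,\cdot)\le\widetilde X(v,\cdot)$, $u\le v$ in $Q$, survives the limit.

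Next I would extend to all $u\in(a,b)$ and upgrade the regularity. For arbitrary $u$ define $\widetilde X(u,\cdot)$ as the limit of $\widetilde X(v,\cdot)$, $v\downarrow u$, $v\in Q$. Running the same Doob plus monotone-$L_2$ argument on the differences $\widetilde X(v,\cdot)-\widetilde X(v',\cdot)$ (non-negative continuous martingales, $u<v'<v$) and using $\E|X(v,T)-X(v',T)|^2\to0$ shows that this family is Cauchy in $L_2(\Omega,C[0,T])$; hence the limit is a continuous martingale and, by the pointwise monotonicity $\widetilde X(u,t)\le\widetilde X(v,t)\le\widetilde X(v',t)$, the convergence is uniform in $t$, giving right continuity of $u\mapsto\widetilde X(u,\cdot)$ in $\|\cdot\|_C$. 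The analogous computation with $v\uparrow u$ produces the left limits, so the trajectories lie in $D((a,b),C[0,T])$. The step--function hypothesis enters precisely here: for every $t\in(0,T]$, $X(t)\in\St$ is bounded, so the slices $u\mapsto\widetilde X(u,t)$ have only finitely many jumps and all one-sided values $\widetilde X(u\pm,t)$ are finite, which keeps the one-sided limit processes finite and lets Dini's theorem promote the pointwise monotone convergence to uniform convergence on the compact interval $[0,T]$.

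Finally, the coalescing property \eqref{f_coalescing} follows from the elementary fact, recalled in the text, that two ordered real continuous martingales coincide after they first meet. For a fixed pair $u<v$ in $Q$ this yields an event of full probability on which $\widetilde X(u,s)=\widetilde X(v,s)$ forces $\widetilde X(u,t)=\widetilde X(v,t)$ for all $t\ge s$; intersecting over the countable family $Q$ and then passing to general $u,v$ through the monotone one-sided approximations extends \eqref{f_coalescing} to all $u,v\in(a,b)$ on a single event of full probability. I expect the main obstacle to be the regularity step: arranging a single null set outside of which $u\mapsto\widetilde X(u,\cdot)$ is simultaneously continuous in $t$ and c\`adl\`ag in $u$ for the sup--norm. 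The delicate points are the uniform-in-$t$ control as $\delta\to0$ and as $v\to u$ (rather than mere pointwise-in-$t$ convergence) and the behavior near $t=0$, where $X(0)$ need not be a step function; both are handled by the $L_2(\Omega,C[0,T])$-Cauchy estimates together with the boundedness of $X(t)$ for $t>0$ coming from the step-function assumption and Proposition~\ref{prp_coalescing}.
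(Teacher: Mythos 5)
Your construction starts the same way as the paper's (averaging $X(t)$ against normalized indicators and shrinking the window), and your treatment of a fixed $u$ --- terminal values Cauchy in $L_2(\Omega)$ by monotonicity plus domination, then Doob's inequality to upgrade this to Cauchyness in $L_2(\Omega,C[0,T])$ --- is essentially the paper's argument for $u$ in its countable set $U$. The gap is in the step you yourself flag as the main obstacle: producing a \emph{single} null set outside of which $u\mapsto\widetilde X(u,\cdot)$ is $C[0,T]$-valued and c\`adl\`ag for \emph{all} $u\in(a,b)$ simultaneously. Each application of the $L_2(\Omega,C[0,T])$-Cauchy estimate (and each subsequence extraction) produces an exceptional null set depending on the particular $u$ whose one-sided limits you are taking, and $(a,b)$ is uncountable. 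Monotonicity does let you define the pointwise-in-$t$ one-sided limits everywhere on one event, but a monotone limit of continuous functions need not be continuous, and Dini's theorem needs continuity of the limit as an \emph{input}. Your appeal to the step-function hypothesis (``the slices have finitely many jumps and finite one-sided values'') does not close this: finiteness of the values is not the issue; continuity in $t$ of the limit process at uncountably many $u$, on one event, is.

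The paper closes exactly this hole with Proposition~\ref{prp_coalescing}: on a single event of full probability the partitions $\Pi_{X(t)}$ decrease in $t$ and $X(r_n)\in\St$ for a sequence $r_n\downarrow0$. Hence for every $\omega$ in that event, every $u$ and every $t\in[r_n,T]$, the averages $(X(t),h_{u,\eps})$ are \emph{eventually constant} in $\eps$ (not merely convergent), and $v\mapsto\widetilde X(v,\cdot)\big|_{[r_n,T]}$ is piecewise constant subordinate to the fixed finite partition $\Pi_{X(r_n,\omega)}$. This makes continuity in $t$ on $(0,T]$ and the c\`adl\`ag property in $u$ away from $t=0$ deterministic consequences of the construction, with no new null sets; the probabilistic (Doob) argument is then needed only to control the behaviour at $t=0$, and only for a countable set of $u$'s --- which must include the discontinuity points of $g$, so that every remaining $u$ can be handled by a deterministic sandwich between two points of the countable set using the continuity of $g$ there. (A similar issue affects your last step: extending \eqref{f_coalescing} from pairs in $Q$ to arbitrary $u<v$ by one-sided approximation only yields $\widetilde X(v,t)\geq\widetilde X(u,t)$, since both points are approximated from the right; the reverse inequality again needs the partition-monotonicity on a single event rather than countably many pairwise statements.) You should restructure the regularity step around this stabilization; as written, the passage from per-$u$ almost-sure statements to the all-$u$ statement does not go through.
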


\begin{proof}
 The propositions are proved in Appendix~\ref{appendix_modif_from_D_for_mart}.
\end{proof}

We define the \textit{quadratic variation} $\langle X\rangle_t$, $t\in[0,T]$, of $X$ as an $(\F_t)$-adapted continuous process starting from zero, with values in the space of nonnegative definite trace-class operators on $L_2$, such that for all $h,g\in L_2$ the joint quadratic variation of the martingales $(X(t),h)$, $(X(t),g)$, $t\in[0,T]$, is
$$
\left\langle(X(\cdot),h),(X(\cdot),g)\right\rangle_t=(\langle X\rangle_t h,g),\quad t\in[0,T].
$$
For more details we refer to~\cite{Gawarecki:2011}.

\section{A finite system of particles}\label{finite_system}

In this section we construct an $L_2^{\uparrow}[a,b]$-valued square integrable martingale with the suitable quadratic variation that describes the evolution of a finite system of coalescing diffusion particles.
Let the system of processes $\{x_k(t),\ t\in[0,T],\ k\in[d]\}$ describe the evolution of particles which start from points $x_1^0<x_2^0<\ldots<x_d^0$ with masses $m_1^0,m_2^0,\ldots,m_d^0$. Such a system of processes has been constructed e.g. in~\cite{Konarovskiy:2010:UMJ:en} and satisfies the following properties
\begin{enumerate}
\item[$(F1)$] for all $k\in[d]$, $x_k(\cdot)$ is a continuous square integrable martingale with respect to the filtration
$$
\F_t^d=\bigcap_{\eps>0}\sigma(x_k(s),\ s\leq t+\eps,\ k\in[d]),\quad t\in[0,T];
$$

\item[$(F2)$] $x_k(0)=x_k^0$ for all $k\in[d]$;

\item[$(F3)$] $x_k(t)\leq x_l(t)$ for all $k<l$ and $t\in[0,T]$;

\item[$(F4)$]  for all $t\in[0,T]$
$$
\langle x_k(\cdot),x_l(\cdot)\rangle_t=\int_0^t\frac{\I_{\{\tau_{k,l}\leq s\}}ds}{m_k(s)},
$$
where $m_k(t)=\sum_{i\in A_k(t)}m_i^0$, $A_k(t)=\{i:\ \exists s\leq t\ x_k(s)=x_i(s)\}$ and
$\tau_{k,l}=\inf\{t:\ x_k(t)=x_l(t)\}\wedge T$.
\end{enumerate}
Moreover, $(F1)-(F4)$ uniquely determine the distribution of the system that is stated in the following lemma.
\begin{lem}\label{lem_uniqueness_x}
If systems of processes $\{x_k(t),\ t\in[0,T],\ k\in[d]\}$ and $\{x'_k(t),\ t\in[0,T],\ k\in[d]\}$ satisfy $(F1)-(F4)$, then their distributions coincide.
\end{lem}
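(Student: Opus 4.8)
The plan is to prove uniqueness in law by induction on the number $d$ of particles, decomposing the evolution at the successive coalescence times and reducing, after each coalescence, to a system of one fewer particle. Before starting the induction I would record a reformulation of $(F4)$. Because the coordinates are ordered by $(F3)$ and coincide forever after they first meet (two ordered continuous martingales agree after their meeting, as recalled just before Proposition~\ref{prp_coalescing}), the index set satisfies $A_k(t)=\{i:\ x_i(t)=x_k(t)\}$, so the mass $m_k(t)=\sum_{i:\,x_i(t)=x_k(t)}m_i^0$ is a deterministic function of the current configuration $X(t)=(x_1(t),\dots,x_d(t))$. Hence $(F4)$ says exactly that $X$ is an $\R^d$-valued continuous square integrable martingale whose quadratic covariation is a path-independent functional of the current state, $\langle x_k,x_l\rangle_t=\int_0^t a_{kl}(X(s))\,ds$ with $a_{kl}(y)=\I_{\{y_k=y_l\}}\big/\sum_{i:\,y_i=y_k}m_i^0$. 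The claim is therefore the well-posedness of the martingale problem for this degenerate, discontinuous coefficient, which I would attack through the hitting times of the coalescing set.

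For the base case $d=1$, a continuous martingale started at $x_1^0$ with $\langle x_1\rangle_t=t/m_1^0$ equals $x_1^0+w/\sqrt{m_1^0}$ for a standard Wiener process $w$ by L\'evy's characterization, so its law is determined. For the inductive step let $\tau=\min_{k\in[d-1]}\tau_{k,k+1}$ be the first coalescence time. On $[0,\tau]$ all coordinates are distinct for $t<\tau$, so $a_{kl}(X(t))=\delta_{kl}/m_k^0$ and the stopped process is a continuous martingale with $\langle x_k,x_l\rangle_{t\wedge\tau}=\delta_{kl}(t\wedge\tau)/m_k^0$. To identify the law of $X(\cdot\wedge\tau)$ together with the coalescing index, I would enlarge the space by an independent standard Wiener process $W$ and set $\hat X(t)=X(t\wedge\tau)+\sqrt D\,(W(t)-W(t\wedge\tau))$ with $D=\mathrm{diag}(1/m_1^0,\dots,1/m_d^0)$; independence of $W$ kills the cross terms, so $\hat X$ has covariation $Dt$ for all $t$, and the multidimensional L\'evy characterization gives $\hat X=X(0)+\sqrt D\,B$ for a standard $d$-dimensional Brownian motion $B$. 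Since $\hat X$ agrees with $X$ on $[0,\tau]$ and $\tau$ is the first hitting time by $\hat X$ of the closed set $\mathcal C=\bigcup_k\{y:\ y_k=y_{k+1}\}$, and a nondegenerate Brownian motion a.s. avoids the codimension-two set $\bigcup_{k\neq l}\{y_k=y_{k+1}\}\cap\{y_l=y_{l+1}\}$, a.s. exactly one adjacent pair $(J,J+1)$ coalesces at $\tau$. Thus the joint law of $\big(X(\cdot\wedge\tau),\,J,\,X(\tau)\big)$ is uniquely determined.

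After $\tau$, coordinates $J$ and $J+1$ coincide forever, so the family collapses to $d-1$ distinct trajectories. I would check that the relabeled reduced family, viewed on $[\tau,T]$ with respect to the shifted filtration $(\F_{\tau+t})$, satisfies $(F1)$--$(F4)$ for $d-1$ particles started from the configuration $X(\tau)$ with the merged mass $m_J^0+m_{J+1}^0$ at the merged site: the martingale property by optional stopping, the ordering by inheritance, and the quadratic variation because the merged mass and the remaining meeting times are precisely those of the reduced system. Passing to a regular conditional distribution given $\F_\tau$ and invoking the induction hypothesis, stated so that uniqueness for a deterministic start yields a unique transition kernel evaluated at the random but $\F_\tau$-measurable start $X(\tau)$, the conditional law of the post-$\tau$ evolution becomes a fixed measurable function of $X(\tau)$. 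Combining this with the already-determined law of $\big(X(\cdot\wedge\tau),J,X(\tau)\big)$ gives uniqueness of the law of $X$ on $[0,T]$, and since $d$ is finite the induction terminates after at most $d-1$ coalescences.

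The main obstacle is the conditioning step in the last paragraph. One must formulate the induction hypothesis so that it applies to a \emph{random} initial configuration, and verify carefully that the post-$\tau$ system genuinely satisfies $(F1)$--$(F4)$ relative to the shifted filtration, in particular that the merged particle's mass function and quadratic variation match those of the reduced $(d-1)$-system. Making this rigorous through regular conditional probabilities and the verification of the four properties after the time shift is where the real work lies; the L\'evy-characterization identification of the dynamics up to the first coalescence is comparatively routine.
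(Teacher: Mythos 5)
Your proposal is correct in outline and follows the same route the paper relies on: the paper gives no proof of this lemma itself but defers to Lemma~3 of the cited work \cite{Konarovskiy:2010:TVP:en}, where uniqueness is likewise established by induction over the successive coalescence times, identifying the dynamics between coalescences via L\'evy's characterization and restarting a reduced system with merged masses. The technical points you flag (polarity of the codimension-two coincidence set, verification of $(F1)$--$(F4)$ for the reduced system after the time shift, and the regular-conditional-probability gluing at the stopping time) are exactly the standard ingredients of that argument.
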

\begin{proof}
 The proof is similar to the proof of Lemma~3~\cite{Konarovskiy:2010:TVP:en}.
\end{proof}

Let us construct an $L_2^{\uparrow}$-valued process that corresponds to the system $\{x_k(t),\ t\in[0,T],\ k\in[d]\}$. Set $a_0=0$, $a_k=a_{k-1}+m_k^0$, $k\in[d]$, and $b=a_d$. Let $\pi_k=[a_{k-1},a_k)$, $k\in[d-1]$, and $\pi_d=[a_{d-1},b]$. We take
\begin{equation}\label{f_function_g}
g=\sum_{k=1}^dx_k^0\I_{\pi_k}
\end{equation}
and
\begin{equation}\label{f_connection_X_x}
X(t)=\sum_{k=1}^dx_k(t)\I_{\pi_k},\quad t\in[0,T].
\end{equation}
It is obvious that $X$ is an $L_2^{\uparrow}[0,b]$-valued continuous process which starts from $g$. Since $\|X(t)\|_{L_2}^2=\sum_{k=1}^dx_k^2(t)|\pi_k|$, we have $\E\|X(t)\|_{L_2}^2<\infty$. Next, for any $h\in L_2$
$$
(X(t),h)=\sum_{k=1}^dx_k(t)(\I_{\pi_k},h),\quad t\in[0,T],
$$
is a martingale. Consequently, $X$ is a square integrable martingale. Let us evaluate its quadratic variation.

Denote the projection of $h$ in $L_2$ on the subspace of $\sigma(g)$-measurable functions by $\pr_gh$. If $g$ is defined by~\eqref{f_function_g}, then
\begin{equation}\label{f_projection}
\pr_gh=\sum_{k=1}^d\frac{1}{|\pi_k|}(\I_{\pi_k},h)\I_{\pi_k}.
\end{equation}
Using properties $(F1)-(F4)$, similarly to~\cite{Konarovskyi_LDP:2015} one can show that
\begin{enumerate}
 \item[(M)] $\langle(X(\cdot),h)\rangle_t=\int_0^t\|\pr_{X(s)}h\|_{L_2}^2ds$ for all $h\in L_2$.
\end{enumerate}
By the polarization formulas for the inner product $(\cdot,\cdot)$ and the joint quadratic variation $\langle\cdot,\cdot\rangle_{\cdot}$, we obtain for $h,f\in L_2$
$$
\left\langle(X(\cdot),h),(X(\cdot),f)\right\rangle_t=\int_0^t(\pr_{X(s)}h,\pr_{X(s)}f)ds=\int_0^t(\pr_{X(s)}h,f)ds,\quad t\in[0,T].
$$
Thus, we have shown that $X$ is an $L_2^{\uparrow}$-valued continuous square integrable martingale with the quadratic variation
\begin{enumerate}
 \item[(M')]$\langle X\rangle_t=\int_0^t\pr_{X(s)}ds$.
\end{enumerate}

We note that $\int_0^t\pr_{X(s)}ds$ is a trace-class operator, since $X$ is a square integrable martingale~~\cite[Lemma~2.1]{Gawarecki:2011}. It follows also from the fact that $\pr_{X(s)}$ is a projection on a space with dimension smaller or equal than $d$ for all $s\in[0,T]$.

Next we prove the inverse statement.
\begin{lem}\label{lem_connection_between_X_x}
 Let $b$, $x_k^0$, $m_k^0$ and $\pi_k$, $k\in[d]$, be as above, $g$ be defined by~\eqref{f_function_g} and $X$ be an $L_2^{\uparrow}[0,b]$-valued continuous square integrable martingale with the quadratic variation $\int_0^{\cdot}\pr_{X(s)}ds$. Then there exists a system of processes $\{x_k(t),\ t\in[0,T],\ k\in[d]\}$ satisfying $(F1)-(F4)$ such that for all $t\in[0,T]$
 $$
 X(t)=\sum_{k=1}^dx_k(t)\I_{\pi_k}\quad\mbox{a.s.}
 $$
\end{lem}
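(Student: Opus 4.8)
The plan is to recover the coordinate processes directly from $X$ by reading off its constant value on each $\pi_k$, and then to verify $(F1)$--$(F4)$ separately, with essentially all the work sitting in the computation of the joint quadratic variation $(F4)$. To pin down the shape of $X(t)$, note that since $X$ starts from $g\in\St$ with $\Pi_g=\{\pi_1,\ldots,\pi_d\}$, Proposition~\ref{prp_coalescing} gives, on one event of full probability, $\Pi_{X(t)}\leq\Pi_{X(0)}=\Pi_g$ for all $t$; by the definition of the partial order this means the interior of each $\pi_k$ is contained in a single level interval of $X(t)$, so $X(t)$ is a.s. constant on every $\pi_k$ and we may write
\[
X(t)=\sum_{k=1}^d x_k(t)\,\I_{\pi_k},\qquad
x_k(t):=\frac{1}{|\pi_k|}(\I_{\pi_k},X(t)).
\]

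Properties $(F2)$ and $(F3)$ are then immediate: $x_k(0)=\frac{1}{|\pi_k|}(\I_{\pi_k},g)=x_k^0$ because the intervals $\pi_j$ are pairwise disjoint, while $x_k(t)\leq x_l(t)$ for $k<l$ follows from $X(t)$ being non-decreasing and $\pi_k$ lying to the left of $\pi_l$. For $(F1)$, each $x_k(t)=(X(t),|\pi_k|^{-1}\I_{\pi_k})$ is a continuous linear functional of the $L_2$-valued martingale $X$, hence itself a continuous martingale; square integrability follows from $\E\|X(t)\|_{L_2}^2<\infty$ and Cauchy--Schwarz. The filtrations agree because $X(t)$ and the vector $(x_1(t),\ldots,x_d(t))$ determine each other, so $\F_t^d$ coincides with the natural filtration of $X$, and the martingale property transfers.

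The core of the argument is $(F4)$. Fix $s$ and let $\rho$ be the level interval of $X(s)$ containing $\pi_k$. Using the coalescing property (ordered continuous martingales that meet stay together) one identifies $\rho=\bigcup_{i\in A_k(s)}\pi_i$, so that $|\rho|=\sum_{i\in A_k(s)}m_i^0=m_k(s)$. The projection formula~\eqref{f_projection} then gives $\pr_{X(s)}\big(|\pi_k|^{-1}\I_{\pi_k}\big)=|\rho|^{-1}\I_\rho$, and pairing with $|\pi_l|^{-1}\I_{\pi_l}$ yields
\[
\Big(\pr_{X(s)}\tfrac{\I_{\pi_k}}{|\pi_k|},\tfrac{\I_{\pi_l}}{|\pi_l|}\Big)
=\frac{\I_{\{l\in A_k(s)\}}}{|\rho|}
=\frac{\I_{\{\tau_{k,l}\leq s\}}}{m_k(s)},
\]
since $l\in A_k(s)$ precisely when the two particles have already met, i.e. $\tau_{k,l}\leq s$. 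Substituting $h=|\pi_k|^{-1}\I_{\pi_k}$ and $f=|\pi_l|^{-1}\I_{\pi_l}$ into the defining identity $\langle(X,h),(X,f)\rangle_t=\int_0^t(\pr_{X(s)}h,f)\,ds$ for the quadratic variation $\int_0^\cdot\pr_{X(s)}\,ds$ produces exactly the formula in $(F4)$.

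I expect the only genuinely delicate point to be the geometric bookkeeping in $(F4)$: matching the level interval $\rho$ of $X(s)$ through $\pi_k$ with the union $\bigcup_{i\in A_k(s)}\pi_i$, and matching the event $\{l\in A_k(s)\}$ with $\{\tau_{k,l}\leq s\}$. Both identifications rest on the coalescence statement behind Proposition~\ref{prp_coalescing}, after which the mass $m_k(s)$ and the meeting time $\tau_{k,l}$ appearing in $(F4)$ emerge automatically; the remaining verifications are routine.
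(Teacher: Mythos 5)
Your argument is correct and follows essentially the same route as the paper's: both recover $x_k(t)$ as a linear functional $(X(t),h)$ of the $L_2$-valued martingale, with $h$ a normalized indicator, and both obtain $(F4)$ by evaluating the projection $\pr_{X(s)}$ on indicators via~\eqref{f_projection} and substituting into the defining identity for the quadratic variation. The one real difference is in how $(F1)$--$(F3)$ and the decomposition $X(t)=\sum_k x_k(t)\I_{\pi_k}$ are packaged: the paper first notes $|\Pi_{X(t)}|\le|\Pi_g|=d$ and invokes Proposition~\ref{prp_modif_from_D_for_mart} to pass to a modification in $D((0,b),C[0,T])$, from which $(F1)$--$(F3)$ and the coalescing identity~\eqref{f_coalescing} come for free, whereas you bypass that proposition and verify these properties by hand directly from Proposition~\ref{prp_coalescing}, the monotonicity of $X(t)$, and the fact that ordered continuous martingales coincide after meeting. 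Your version is marginally more self-contained in this finite-dimensional setting (the modification result is genuinely needed only in the general case later on), and your explicit level-set bookkeeping for $(F4)$ --- identifying the level interval $\rho$ through $\pi_k$ with $\bigcup_{i\in A_k(s)}\pi_i$ and the event $\{l\in A_k(s)\}$ with $\{\tau_{k,l}\le s\}$ --- is exactly the content the paper compresses into the citation of~\eqref{f_coalescing}. I see no gap; the delicate points you flagged (the stick-together property and the identification of $m_k(s)$ with the length of the level interval) are precisely the ones that need the coalescence statement, and you invoke it where required.
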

\begin{proof}
By Proposition~\ref{prp_coalescing} and Remark~\ref{rem_size_of_Pi}, $\p\{|\Pi_{X(t)}|\leq|\Pi_g|=d,\ t\in[0,T]\}=1$. So, $X(t)\in\St$ a.s. for all $t\in[0,T]$. Hence, by Proposition~\ref{prp_modif_from_D_for_mart}, there exist a modification of $X$ from $D((0,b),C[0,T])$. We denote the modification also by $X$.

Let $\delta=\min\limits_{k\in[d]}|\pi_k|$ and $h_k=\frac{1}{\delta}\I_{[a_k,a_k+\delta]},$ where $a_0=0$, $a_k=a_{k-1}+m_k^0$, $k\in[d]$. Set
$$
x_k(t)=(X(t),h_k),\quad t\in[0,T],\ \ k\in[d].
$$
Then by Proposition~\ref{prp_modif_from_D_for_mart}, the system $\{x_k(t),\ t\in[0,T],\ k\in[d]\}$ satisfies $(F1)-(F3)$ and by~\eqref{f_coalescing}, for all $t\in[0,T]$
$$
X(t)=\sum_{k=1}^dx_k(t)\I_{\pi_k}\quad\mbox{a.s.}
$$
We evaluate
\begin{align*}
 \langle x_k(\cdot),x_l(\cdot)\rangle_t&=\left\langle(X(\cdot),h_k),(X(\cdot),h_l)\right\rangle_t\\
 &=\int_0^t(\pr_{X(s)}h_k,\pr_{X(s)}h_l)ds=\int_0^t\frac{\I_{\{\tau_{k,l}\leq s\}}ds}{m_k(s)},\quad t\in[0,T].
\end{align*}
It finishes the proof.
\end{proof}

Lemmas~\ref{lem_uniqueness_x}~and~\ref{lem_connection_between_X_x} immediately imply the following result.

\begin{prp}\label{prp_uniqueness}
For each $g\in\St$ there exists an $L_2^{\uparrow}[0,b]$-valued continuous square integrable martingale $X(t)$, $t\in[0,T]$, with the quadratic variation $\langle X\rangle_t=\int_0^t\pr_{X(s)}ds$ which starts from $g$. Moreover, if $Y(t)$, $t\in[0,T]$, is other $L_2^{\uparrow}[0,b]$-valued continuous square integrable martingale with the same quadratic variation that starts from $g$, then the distributions of $X$ and $Y$ coincide in $C([0,T],L_2^{\uparrow}[0,b])$.
\end{prp}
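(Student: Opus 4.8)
The plan is to read off both assertions directly from the two finite-system lemmas, exploiting the correspondence between step-valued martingales and finite particle systems established above.

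For existence, I would first put $g\in\St$ into standard form. Since $|\Pi_g|=d<\infty$, write $\Pi_g=\{\pi_1,\dots,\pi_d\}$ in increasing order and $g=\sum_{k=1}^d x_k^0\I_{\pi_k}$ with $x_1^0<\dots<x_d^0$; setting $m_k^0=|\pi_k|$ and $a_k=a_{k-1}+m_k^0$ recovers exactly the setup of~\eqref{f_function_g}. The construction cited above (e.g.\ \cite{Konarovskiy:2010:UMJ:en}) provides a finite system $\{x_k(t),\ t\in[0,T],\ k\in[d]\}$ satisfying $(F1)$--$(F4)$ for these initial data, and the computation carried out before Lemma~\ref{lem_connection_between_X_x} shows that $X(t)=\sum_{k=1}^d x_k(t)\I_{\pi_k}$, defined by~\eqref{f_connection_X_x}, is an $L_2^{\uparrow}[0,b]$-valued continuous square integrable martingale starting from $g$ with $\langle X\rangle_t=\int_0^t\pr_{X(s)}ds$. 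This yields the first assertion.

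For uniqueness, let $X$ and $Y$ be two such martingales starting from $g$. Applying Lemma~\ref{lem_connection_between_X_x} to each produces finite systems $\{x_k\}$ and $\{y_k\}$, both satisfying $(F1)$--$(F4)$ with the same initial data $x_k^0,m_k^0$ (these are determined by $g$ and the fixed partition $\{\pi_k\}$), such that $X(t)=\sum_k x_k(t)\I_{\pi_k}$ and $Y(t)=\sum_k y_k(t)\I_{\pi_k}$ a.s.\ for each fixed $t$. By Lemma~\ref{lem_uniqueness_x} the laws of $\{x_k\}$ and $\{y_k\}$ in $C([0,T],\R^d)$ coincide. To transport this to the path space, introduce the linear map $\Phi\colon C([0,T],\R^d)\to C([0,T],L_2^{\uparrow}[0,b])$ given by $\Phi(z_1,\dots,z_d)(t)=\sum_{k=1}^d z_k(t)\I_{\pi_k}$, which is continuous because $\|\sum_k z_k\I_{\pi_k}\|_{L_2}^2=\sum_k z_k^2\,|\pi_k|$. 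Since $X$ and $t\mapsto\Phi(x_\cdot)(t)$ are both continuous in $t$ and agree a.s.\ at each fixed $t$, they agree a.s.\ as elements of $C([0,T],L_2^{\uparrow}[0,b])$; hence $\law(X)=\Phi_*\law(x_\cdot)$ and likewise $\law(Y)=\Phi_*\law(y_\cdot)$. The equality $\law(x_\cdot)=\law(y_\cdot)$ then forces $\law(X)=\law(Y)$.

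The only genuinely delicate point is this last transport step: one must check that $\Phi$ is measurable (continuity suffices) and upgrade ``equal a.s.\ at each fixed $t$'' to ``equal a.s.\ as paths,'' which follows from continuity in $t$ together with density of a countable set of times and separability of $L_2^{\uparrow}[0,b]$. Everything else is an immediate invocation of Lemmas~\ref{lem_uniqueness_x} and~\ref{lem_connection_between_X_x}, which is why the result follows at once from them.
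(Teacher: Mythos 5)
Your proposal is correct and follows essentially the same route as the paper, which derives the proposition immediately from Lemmas~\ref{lem_uniqueness_x} and~\ref{lem_connection_between_X_x}: existence via the finite system $\{x_k\}$ satisfying $(F1)$--$(F4)$ and the computation establishing $(M')$ for $X(t)=\sum_k x_k(t)\I_{\pi_k}$, and uniqueness by representing any such martingale through Lemma~\ref{lem_connection_between_X_x} and invoking the distributional uniqueness of the finite system. Your explicit treatment of the transport step via the continuous map $\Phi$ merely spells out what the paper leaves implicit.
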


We denote the distribution of the $L_2^{\uparrow}$-valued continuous square integrable martingale with quadratic variation $(M')$ starting from $g$ in the space $C([0,T],L_2^{\uparrow})$ by $P_g$. We will consider the set of step functions $\St$ as a topological subspace of $L_2^{\uparrow}$ with the induced topology. Let $\Pp$ be the space of all probability measures on $C([0,T],L_2^{\uparrow})$, endowed with the weak topology. Since the system of processes $\{x_k(t),\ t\in[0,T],\ k\in[d]\}$ satisfying $(F1)-(F3)$ can be constructed by coalescence of Wiener trajectories (see e.g.~\cite{Konarovskiy:2010:TVP:en,Konarovskiy:2010:UMJ:en}) and $X$ can be defined by~\eqref{f_connection_X_x}, it is easy to see that the map $P_{\cdot}:\St\to\Pp$ is measurable. Consequently, the probability measures
$$
P^{\xi}=\int_{\St}P_g\Xi(dg)
$$
is well-defined for any random element $\xi$ in $\St$ with the distribution $\Xi$.

\begin{prp}\label{proposition_distribution_of_mart}
A process $X(t)$, $t\in[0,T]$, with $X(0)\in\St$ a.s., is an $L_2^{\uparrow}$-valued continuous square integrable martingale with the quadratic variation $(M')$ if and only if $\law\{X\}=P^{X(0)}$.
\end{prp}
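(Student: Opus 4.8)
The plan is to prove both implications by disintegrating the law of the process over its initial value and reducing everything to the deterministic-start uniqueness of Proposition~\ref{prp_uniqueness}. Throughout write $\Xi=\law\{X(0)\}$, which by the standing assumption $X(0)\in\St$ a.s. is a probability measure concentrated on $\St$. The structural fact that drives both directions is that $\sigma(X(0))\subseteq\F_s$ for every $s$, so that conditioning on $X(0)$ cannot destroy the $(\F_t)$-martingale property; this is what allows uniqueness for a deterministic start to be promoted to uniqueness for a random start.

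For the easier implication $\law\{X\}=P^{X(0)}\Rightarrow$ the martingale property, I would integrate the defining properties of the $P_g$. Paths lie in $C([0,T],L_2^{\uparrow})$, so $X$ is continuous and $L_2^{\uparrow}$-valued, and $X(0)\in\St$ a.s. since each $P_g$ is carried by paths issued from $g\in\St$. For $h\in L_2$, $s<t$, and $A$ in the generated $\sigma$-algebra up to time $s$, Fubini gives
\begin{equation*}
\E^{\p}\big[(X(t)-X(s),h)\,\I_A\big]=\int_{\St}\E^{P_g}\big[(X(t)-X(s),h)\,\I_A\big]\,\Xi(dg)=0,
\end{equation*}
because under each $P_g$ the process is a martingale for the same generated filtration and $A\in\F_s$; square integrability follows from $\E^{\p}\|X(t)\|_{L_2}^2=\int\E^{P_g}\|X(t)\|_{L_2}^2\,\Xi(dg)$ together with the bound $\E^{P_g}\|X(t)\|_{L_2}^2\le\|g\|_{L_2}^2+t\,|\Pi_g|$ (the trace of $\pr_{X(s)}$ never exceeds $|\Pi_g|$) and the ambient moment assumption on $X(0)$. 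Applying the same mixing identity to $N_t^h:=(X(t),h)^2-\int_0^t\|\pr_{X(s)}h\|_{L_2}^2\,ds$, a martingale under every $P_g$ by $(M)$, shows $N^h$ is a $\p$-martingale, so $\langle(X(\cdot),h)\rangle_t=\int_0^t\|\pr_{X(s)}h\|_{L_2}^2\,ds$; polarization upgrades this to $(M')$.

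The harder implication, the martingale property $\Rightarrow\law\{X\}=P^{X(0)}$, is where the main work lies. Since $C([0,T],L_2^{\uparrow})$ is Polish I would disintegrate $\law\{X\}$ along $X(0)$, obtaining a probability kernel $g\mapsto Q_g$ with $\law\{X\}=\int_{\St}Q_g\,\Xi(dg)$ and $Q_g$ carried by paths starting at $g$. The goal is $Q_g=P_g$ for $\Xi$-a.e.\ $g$, after which integration finishes the proof; by Proposition~\ref{prp_uniqueness} it suffices to verify that under $Q_g$ the canonical process is, for $\Xi$-a.e.\ $g$, an $L_2^{\uparrow}$-valued continuous square integrable martingale with quadratic variation $(M')$ starting from $g$. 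The martingale identity under $Q_g$ is extracted from the one under $\p$: for fixed $h,s,t,A$ and every $C\in\sigma(X(0))$ one has $A\cap C\in\F_s$, whence $\E^{\p}[(X(t)-X(s),h)\I_{A\cap C}]=0$, i.e.\ $\E^{\p}[(X(t)-X(s),h)\I_A\mid X(0)]=0$ a.s., so $\E^{Q_g}[(X(t)-X(s),h)\I_A]=0$ for $\Xi$-a.e.\ $g$; the same disintegration applied to $N^h$ yields the quadratic variation, and integrability under $Q_g$ follows from $\int\E^{Q_g}\|X(t)\|_{L_2}^2\,\Xi(dg)=\E^{\p}\|X(t)\|_{L_2}^2<\infty$.

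The delicate point — and the step I expect to be the main obstacle — is passing from ``for $\Xi$-a.e.\ $g$, for each fixed $(h,s,t,A)$'' to ``for $\Xi$-a.e.\ $g$, for all $(h,s,t,A)$ simultaneously'', since Proposition~\ref{prp_uniqueness} demands the full martingale and quadratic-variation properties under a single good $Q_g$. I would resolve this by separability: it is enough to check the identities for $h$ in a countable dense subset of $L_2$, for rational $s<t$, and for $A$ ranging over a countable algebra generating the canonical $\sigma$-algebra up to time $s$, discarding one $\Xi$-null set; continuity of trajectories and density then extend the properties to all $h,s,t,A$. On the complement of this null set $Q_g$ satisfies the hypotheses of Proposition~\ref{prp_uniqueness}, so $Q_g=P_g$, and therefore $\law\{X\}=\int_{\St}P_g\,\Xi(dg)=P^{X(0)}$.
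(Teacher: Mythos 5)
Your proposal is correct and follows essentially the same route as the paper: the paper's one-line proof invokes the regular conditional distribution of $X$ given $\sigma(X(0))$ together with Proposition~\ref{prp_uniqueness}, which is precisely the disintegration $g\mapsto Q_g$ and the reduction to the deterministic-start uniqueness that you carry out in detail. Your treatment of the null-set issue via countable separability and your explicit handling of the easier direction are exactly the details the paper leaves implicit.
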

\begin{proof}
 The statement follows from the existence of regular conditional distribution of $X$ given $\sigma(X(0))$ (see~Theorem~1.3.1~\cite{Watanabe:1981:en}) and Proposition~\ref{prp_uniqueness}.
\end{proof}

\section{The main estimations}\label{estimations}

In this section we will suppose that $Y(u,\cdot),\ u\in(a,b)$, is a $C[0,T]$-valued random process with trajectories in $D((a,b),C[0,T])$ that satisfies $(C1)-(C4)$ with $(0,b)$ replaced by $(a,b)$.

It should be noted that in this section we do not claim that the process $Y$ exists, here we only study properties of $Y$ if it exists.

We will interpret $Y$ as the description of the evolution of particles on the real line which coalesce and change their masses and diffusion rates. Since $m(u,t)$ is the mass of particle at time $t$ that starts from $g(u)$, the inequality $m(u,t)<r$ implies that the particles starting from $g(u)$ and $g(u+r)$ ($g(u-r)$) have not coalesced by $t$. Moreover, the particle, which starts from $g(u)$, has diffusion rate grater then $\frac{1}{r}$. Consequently, $\p\{m(u,t)<r\}$ can be estimated by $\p\{\mbox{the Wiener process starting from}\ g(u+r)-g(u)$ $\mbox{with diffusion}\ \frac{1}{r}\ \mbox{does not hit 0 by time}\ t\}$. This is the main idea of the proof of the following lemma that is the key statement that allows to prove the existence of a martingale with the quadratic variation $(M')$ which starts from $g\in L_p^{\uparrow}$ and to study its asymptotic behavior.

\begin{lem}\label{lemma_estimation_of_p_m}
 For all $u\in(a,b)$, $0<r<b-u$ and $t\in(0,T]$
$$
\p\{m(u,t)<r\}\leq\frac{2}{\sqrt{2\pi}}\int_0^{\frac{G(u,r)\sqrt{r}}{\sqrt{t}}}e^{-\frac{x^2}{2}}dx\leq \frac{2\sqrt{r}}{\sqrt{2\pi t}}G(u,r),
$$
where $G(u,r)=g(u+r)-g(u)$.
\end{lem}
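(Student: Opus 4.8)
The plan is to reduce the event $\{m(u,t)<r\}$ to a non-coalescence event for the pair of particles started at $u$ and $u+r$, and then to compare the gap between these two particles with a single Brownian motion through a time change, so that the final probability is computed by the reflection principle.

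First I would carry out the geometric reduction. For fixed $u$ and $t$, consider the coalescence set $C_u(t)=\{w\in(a,b):\ \exists s\le t,\ Y(u,s)=Y(w,s)\}$, so that $m(u,t)=\leb(C_u(t))$. Using the monotonicity $(C3)$ one checks that $C_u(t)$ is an interval containing $u$: if $Y(u,s_0)=Y(w,s_0)$ with $w>u$, then for every $w'\in[u,w]$ the sandwich $Y(u,s_0)\le Y(w',s_0)\le Y(w,s_0)=Y(u,s_0)$ forces $w'\in C_u(t)$, and symmetrically for $w<u$. Since $u\in C_u(t)$ and $\leb(C_u(t))=m(u,t)<r$, the right endpoint of $C_u(t)$ is strictly below $u+r$, so $u+r\notin C_u(t)$; by $(C3)$ this means $Y(u+r,s)>Y(u,s)$ for all $s\le t$. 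The hypothesis $r<b-u$ guarantees $u+r\in(a,b)$, so $Y(u+r,\cdot)$ is available and
\begin{equation*}
\{m(u,t)<r\}\ \subseteq\ \{Y(u+r,s)>Y(u,s)\ \text{ for all }s\le t\}.
\end{equation*}

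Next I would analyze the gap process $Z(s)=Y(u+r,s)-Y(u,s)$. By $(C1)$ it is a continuous square integrable martingale, by $(C3)$ it is nonnegative, and by $(C2)$ it starts at $Z(0)=G(u,r)\ge 0$. Bilinearity of the joint quadratic variation together with $(C4)$ gives, before the two particles meet (where the cross term vanishes since $\I_{\{\tau_{u,u+r}\le w\}}=0$),
\begin{equation*}
A(s):=\langle Z\rangle_s=\int_0^s\left(\frac{1}{m(u,w)}+\frac{1}{m(u+r,w)}\right)dw\ \ge\ \int_0^s\frac{dw}{m(u,w)}.
\end{equation*}
Since $m(u,\cdot)$ is non-decreasing, on the event $\{m(u,t)<r\}$ we have $m(u,w)\le m(u,t)<r$ for all $w\le t$, whence $A(t)\ge t/r$ \emph{on that event}. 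Now I would represent $Z$ by Dambis--Dubins--Schwarz, enlarging the space if needed so that the time-change Brownian motion $\beta$ is defined on all of $[0,\infty)$: $Z(s)=G(u,r)+\beta(A(s))$. Because the integrand is bounded below by $1/(b-a)>0$, the clock $A$ is continuous and strictly increasing up to the meeting time, so with $\sigma=\inf\{v:\ \beta(v)=-G(u,r)\}$ the non-coalescence of $u$ and $u+r$ on $[0,t]$ is equivalent to $\beta(v)>-G(u,r)$ for $v\le A(t)$, i.e. to $\sigma>A(t)$. Combining the two facts valid on $\{m(u,t)<r\}$, namely $\sigma>A(t)$ and $A(t)\ge t/r$, yields $\{m(u,t)<r\}\subseteq\{\sigma>t/r\}$.

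Finally I would evaluate $\p\{\sigma>t/r\}$, where $\sigma$ is the first passage time of a standard Brownian motion $\beta$ to level $-G(u,r)$. By the reflection principle and the symmetry $\beta\overset{d}{=}-\beta$,
\begin{equation*}
\p\{\sigma>v\}=\p\Big\{\max_{[0,v]}\beta<G(u,r)\Big\}=2\Phi\!\left(\frac{G(u,r)}{\sqrt v}\right)-1=\frac{2}{\sqrt{2\pi}}\int_0^{G(u,r)/\sqrt v}e^{-x^2/2}dx,
\end{equation*}
where $\Phi$ is the standard normal distribution function. Taking $v=t/r$ gives $G(u,r)/\sqrt v=G(u,r)\sqrt r/\sqrt t$ and hence the first claimed inequality; the second follows immediately from $e^{-x^2/2}\le 1$. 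I expect the main obstacle to be precisely the coupling step: the lower bound $A(t)\ge t/r$ holds only on the event of interest, so it cannot be inserted directly into a fixed-level hitting estimate, and it is the clean set inclusion $\{m(u,t)<r\}\subseteq\{\sigma>t/r\}$ that makes the deterministic reflection computation applicable. Verifying the interval structure of $C_u(t)$ and the strict monotonicity of the clock $A$ (needed for the hitting-time identity) are the supporting technical points.
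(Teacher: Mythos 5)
Your proposal is correct and follows essentially the same route as the paper's proof: reduce $\{m(u,t)<r\}$ to the non-coalescence of the particles started at $u$ and $u+r$, bound the quadratic variation of the gap martingale from below by $t/r$ on that event, pass to a Brownian motion via the Dambis--Dubins--Schwarz time change, and conclude with the reflection principle. The only differences are presentational (you spell out the interval structure of the coalescence set and compute the first-passage probability directly rather than by Brownian scaling), and you correctly identify the set inclusion $\{m(u,t)<r\}\subseteq\{\sigma>t/r\}$ as the step that makes the random lower bound on the clock usable.
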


\begin{rem}\label{rem_about_G}
 The lemma also is true if the assumption $0<r<b-u$ is replaced by $0<r<u-a$ and the function $G(u,r)=g(u+r)-g(u)$ by $G(u,r)=g(u)-g(u-r)$.
\end{rem}

\begin{proof}[Proof of Lemma~\ref{lemma_estimation_of_p_m}]
The proof is similar to the proof of Lemma~2.16~\cite{Konarovskyi:2014:arx}. Let $0<r<b-u$. We denote
$$
M(t)=Y(u+r,t)-Y(u,t)
$$
and
$$
A_t=\{m(u,t)<r\}.
$$
Note that $M(\cdot)$ is a continuous square integrable martingale with the quadratic variation
$$
\langle M(\cdot)\rangle_t=\left\langle Y(u+r,\cdot)\right\rangle_t+\langle Y(u,\cdot)\rangle_t- 2\left\langle Y(u+r,\cdot),Y(u,\cdot)\right\rangle_t.
$$
By $(C4)$, we have
$$
\left\langle Y(u+r,\cdot),Y(u,\cdot)\right\rangle_t\I_{\{M(t)>0\}}=0,\quad t\in[0,T].
$$
Taking $\omega\in A_t$, we see that $\omega\in\{M(t)>0\}$ because $Y(u+r,\cdot,\omega)$ and $Y(u,\cdot,\omega)$ do not meet by time $t$. Hence,
\begin{align*}
\langle M(\cdot)\rangle_t(\omega)&=\left\langle Y(u+r,\cdot)\right\rangle_t(\omega)+\langle Y(u,\cdot)\rangle_t(\omega)\\
&\geq\langle Y(u,\cdot)\rangle_t(\omega)=\int_0^t\frac{ds}{m(u,s,\omega)}\geq \frac{t}{r}.
\end{align*}
Next, since $M(\cdot)$ is a continuous square integrable martingale, there exists a Wiener process $w(t),\ t\geq 0$, such that
\begin{equation}\label{f_M_w}
M(t)=G(u,r)+w\left(\langle M(\cdot)\rangle_t\right),
\end{equation}
by Theorem~2.7.2'~\cite{Watanabe:1981:en}. We set
$$
\tau=\inf\{t:\ M(t)=0\}\wedge T\ \ \mbox{and}\ \ \sigma=\inf\{t:\ G(u,r)+w(t)=0\}.
$$
It is easy to see that~\eqref{f_M_w} implies
$$
\langle M(\cdot)\rangle_{\tau}\leq\sigma.
$$
Note that if $\omega\in A_t$, then $\tau(\omega)>t$ and hence, by the last inequality,
$$
\sigma(\omega)\geq\langle M(\cdot)\rangle_{\tau(\omega)}(\omega)\geq \langle M(\cdot)\rangle_{t}(\omega)\geq \frac{t}{r}.
$$
Now we are ready to estimate the probability of $A_t$. So,
\begin{align*}
\p\{A_t\}&=\p\{A_t,\ M(t)>0\}=\p\{A_t,\ \tau>t\}\leq\p\left\{A_t,\ \sigma>\frac{t}{r}\right\}\\
&\leq\p\left\{\sigma>\frac{t}{r}\right\}=\p\left\{\max\limits_{s\in[0,t/r]}w(s)<G(u,r)\right\}\leq \p\left\{\max\limits_{s\in[0,1]}w(s)<\frac{G(u,r)\sqrt{r}}{\sqrt{t}}\right\}\\
&\leq\frac{2}{\sqrt{2\pi}}\int_0^{\frac{G(u,r)\sqrt{r}}{\sqrt{t}}}e^{-\frac{x^2}{2}}dx\leq\frac{2}{\sqrt{2\pi}}\frac{G(u,r)\sqrt{r}}{\sqrt{t}}.
\end{align*}
It finishes the proof.
\end{proof}

\begin{prp}\label{proposition_estimation_of_m}
Let $p>1$. Then for every $g\in L_p^{\uparrow}[a,b]$ and $0<\beta<\frac{3}{2}-\frac{1}{p}$
$$
\E\int_a^b\frac{du}{m^{\beta}(u,t)}\leq \frac{C_{p,\beta,a,b}}{\sqrt{t}}(1+\|g\|_{L_p}), \quad t\in(0,T].
$$
\end{prp}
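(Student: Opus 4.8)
The plan is to estimate $\E[m^{-\beta}(u,t)]$ for each fixed $u$ and then integrate in $u$; since every integrand below is nonnegative, Tonelli lets me freely exchange $\E$, $\int_a^b du$ and the $dr$-integral. First I would note that $0<m(u,t)\le b-a$ a.s.\ (the upper bound because $m(u,t)$ is the length of a subset of $(a,b)$, and positivity because Lemma~\ref{lemma_estimation_of_p_m} forces $\p\{m(u,t)<r\}\to 0$ as $r\to 0$), and then use the layer-cake formula together with the substitution $\lambda=r^{-\beta}$ to write
\begin{equation*}
\E[m^{-\beta}(u,t)]=(b-a)^{-\beta}+\beta\int_0^{b-a}\p\{m(u,t)<r\}\,r^{-\beta-1}\,dr.
\end{equation*}
Integrating in $u$ and exchanging the order of integration then reduces the statement to controlling $\int_a^b\p\{m(u,t)<r\}\,du$ for each fixed $r\in(0,b-a)$.

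For fixed $r$ I would bound the inner integral by Lemma~\ref{lemma_estimation_of_p_m} in its linear form $\p\{m(u,t)<r\}\le\frac{2\sqrt r}{\sqrt{2\pi t}}G(u,r)$, using the forward increment $G(u,r)=g(u+r)-g(u)$ on $u\in(a,b-r)$ and, via Remark~\ref{rem_about_G}, the backward increment $G(u,r)=g(u)-g(u-r)$ on $u\in(a+r,b)$. For $r\le(b-a)/2$ these two ranges cover $(a,b)$, so
\begin{equation*}
\int_a^b\p\{m(u,t)<r\}\,du\le\frac{2\sqrt r}{\sqrt{2\pi t}}\left(\int_a^{b-r}(g(u+r)-g(u))\,du+\int_{a+r}^b(g(u)-g(u-r))\,du\right),
\end{equation*}
whereas for $(b-a)/2<r<b-a$ the bounded uncovered middle interval is handled by the trivial bound $\p\{m(u,t)<r\}\le1$, which contributes only a constant since $r^{-\beta-1}$ stays bounded there.

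The key step is the estimate of $\int_a^{b-r}(g(u+r)-g(u))\,du$. Telescoping the shifted integral gives $\int_{b-r}^b g-\int_a^{a+r}g$, and H\"older's inequality on each interval of length $r$ yields $\big|\int_{b-r}^b g\big|\le\|g\|_{L_p}\,r^{1-1/p}$ and similarly for the other term, so the whole expression is at most $2\|g\|_{L_p}\,r^{1-1/p}$; the backward integral obeys the same bound. Hence, for $r\le(b-a)/2$,
\begin{equation*}
\int_a^b\p\{m(u,t)<r\}\,du\le\frac{C}{\sqrt t}\,\|g\|_{L_p}\,r^{3/2-1/p}.
\end{equation*}
This is where $p>1$ (so that the conjugate exponent is finite) enters, and it is the main obstacle: the naive bound $\int_a^{b-r}(g(u+r)-g(u))\,du\le r\,(g(b)-g(a))$ produces the total variation $g(b)-g(a)$, which is \emph{not} controlled by $\|g\|_{L_p}$, so the telescoping-plus-H\"older device that instead extracts the factor $r^{1-1/p}\|g\|_{L_p}$ is essential.

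Finally I would substitute this into the $r$-integral:
\begin{equation*}
\beta\int_0^{(b-a)/2}\frac{C\|g\|_{L_p}}{\sqrt t}\,r^{3/2-1/p}\,r^{-\beta-1}\,dr=\frac{C\beta\|g\|_{L_p}}{\sqrt t}\int_0^{(b-a)/2}r^{1/2-1/p-\beta}\,dr,
\end{equation*}
which converges exactly when $1/2-1/p-\beta>-1$, that is, when $\beta<\frac32-\frac1p$ — precisely the hypothesis. Collecting this with the constants coming from the $(b-a)^{-\beta}$ term and from the trivial-bound region, and using $t\le T$ to absorb those bounded terms into $1/\sqrt t$, gives the desired estimate $\E\int_a^b m^{-\beta}(u,t)\,du\le\frac{C_{p,\beta,a,b}}{\sqrt t}(1+\|g\|_{L_p})$.
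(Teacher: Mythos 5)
Your proof is correct and is essentially the paper's own argument: both rest on the layer--cake representation of $\E\,m^{-\beta}(u,t)$, the bound of Lemma~\ref{lemma_estimation_of_p_m} (with Remark~\ref{rem_about_G} for the backward increment), and the telescoping-plus-H\"older step that turns $\int(g(u+r)-g(u))\,du$ into $C\|g\|_{L_p}r^{1-1/p}$, with the condition $\beta<\frac32-\frac1p$ emerging from convergence of the resulting $r$-integral. The only differences are cosmetic: you parametrize the layer-cake integral by the mass threshold $r$ itself rather than by $r^{1/\beta}$, and you split the $u$-domain according to where the forward/backward bounds apply instead of fixing the halves $[a,(a+b)/2]$ and $[(a+b)/2,b]$ as the paper does.
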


\begin{proof}
 Without loss of generality, we assume that $a=0$. Using Lemma~\ref{lemma_estimation_of_p_m} and H\"{o}lder's inequality, we can estimate
\begin{align*}
\int_0^{\frac{b}{2}}&\E\frac{1}{m^{\beta}(u,t)}du=\int_0^{\frac{b}{2}}\int_0^{\infty}\p\left\{\frac{1}{m(u,t)}>r^{\frac{1}{\beta}}\right\}dudr\\
&\leq\int_0^{\frac{b}{2}}\int_0^{\frac{2^{\beta}}{b^{\beta}}}1dudr+\frac{2}{\sqrt{2\pi t}}\int_0^{\frac{b}{2}}\int_{\frac{2^{\beta}}{b^{\beta}}}^{\infty}\frac{g\left(u+\frac{1}{r^{1/\beta}}\right)-g(u)}{r^{\frac{1}{2\beta}}}dudr\\
&=\frac{2^{\beta-1}}{b^{\beta-1}}+\frac{2}{\sqrt{2\pi t}}\int_{\frac{2^{\beta}}{b^{\beta}}}^{\infty}\left[\frac{1}{r^{\frac{1}{2\beta}}}\int_0^b\left(\I_{\left[\frac{1}{r^{1/\beta}},\frac{b}{2}+\frac{1}{r^{1/\beta}}\right]}(u)-\I_{\left[0,\frac{b}{2}\right]}(u)\right)g(u)du\right]dr\\
&\leq\frac{2^{\beta-1}}{b^{\beta-1}}+\frac{2}{\sqrt{2\pi t}}\int_{\frac{2^{\beta}}{b^{\beta}}}^{\infty}\left[\frac{1}{r^{\frac{1}{2\beta}}}\left\|\I_{\left[\frac{1}{r^{1/\beta}},\frac{b}{2}+\frac{1}{r^{1/\beta}}\right]}-\I_{\left[0,\frac{b}{2}\right]}\right\|_{L_q}\|g\|_{L_p}\right]dr\\
&\leq\frac{2^{\beta-1}}{b^{\beta-1}}+\frac{2^{\frac{1}{q}+1}}{\sqrt{2\pi t}}\int_{\frac{2^{\beta}}{b^{\beta}}}^{\infty}\frac{1}{r^{\frac{1}{2\beta}+\frac{1}{\beta q}}}dr\|g\|_{L_p}\leq \frac{C_{p,\beta,b}}{\sqrt{t}}(1+\|g\|_{L_p}),
\end{align*}
where $\frac{1}{p}+\frac{1}{q}=1$.

Similarly, using Lemma~\ref{lemma_estimation_of_p_m} and Remark~\ref{rem_about_G}, we obtain
$$
\int_{\frac{b}{2}}^b\E\frac{1}{m^{\beta}(u,t)}du\leq\frac{C_{p,\beta,b}}{\sqrt{t}}(1+\|g\|_{L_p}).
$$
The proposition is proved.
\end{proof}

Next, let $X(t)$, $t\in[0,T]$, be an $L_2^{\uparrow}[0,b]$-valued continuous square integrable martingale with the quadratic variation $\langle X\rangle_t=\int_0^t\pr_{X(s)}ds$ which starts from $g\in\St$. By Proposition~\ref{lem_connection_between_X_x}, $X$ has a modification from $D((0,b),C[0,T])$ that satisfies $(C1)-(C4)$. Consequently, Proposition~\ref{proposition_estimation_of_m} is applicable to $X$.

\begin{prp}\label{proposition_estimation_of_z}
For every $g\in\St$, $0\leq\delta<1$ and $\eps>\frac{2\delta}{1-\delta}$
$$
\E\sup_{s\in[0,t]}\|X(s)-g\|_{L_{2+\delta}}^{2+\delta}\leq C_{\delta,\eps,b}t^{1+\frac{\delta}{2}}\left(1+\|g\|_{L_{2+\eps}}\right), \quad t\in[0,T].
$$
\end{prp}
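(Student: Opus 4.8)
The plan is to reduce the spatial $L_{2+\delta}$-supremum to a one-particle estimate and then feed in the moment bound of Proposition~\ref{proposition_estimation_of_m}. First I would use that $X$ has the $D((0,b),C[0,T])$-modification satisfying $(C1)$--$(C4)$, so that for each fixed $u$ the process $X(u,\cdot)-g(u)$ is a continuous square integrable martingale started at $0$ with $\langle X(u,\cdot)\rangle_t=\int_0^t\frac{ds}{m(u,s)}$ (take $u=v$ in $(C4)$ and note $\tau_{u,u}=0$). Since the integrand is nonnegative, the pointwise inequality
$$
\sup_{s\le t}\int_0^b|X(u,s)-g(u)|^{2+\delta}\,du\le\int_0^b\sup_{s\le t}|X(u,s)-g(u)|^{2+\delta}\,du
$$
together with Tonelli's theorem gives
$$
\E\sup_{s\le t}\|X(s)-g\|_{L_{2+\delta}}^{2+\delta}\le\int_0^b\E\Big(\sup_{s\le t}|X(u,s)-g(u)|\Big)^{2+\delta}\,du .
$$

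Next I would apply the Burkholder--Davis--Gundy inequality to each martingale $X(u,\cdot)-g(u)$ with exponent $2+\delta$, obtaining a constant $C_\delta$ depending only on $\delta$ with
$$
\E\Big(\sup_{s\le t}|X(u,s)-g(u)|\Big)^{2+\delta}\le C_\delta\,\E\Big(\int_0^t\frac{ds}{m(u,s)}\Big)^{1+\frac{\delta}{2}} .
$$
To linearise the power $1+\frac{\delta}{2}\ge 1$ I would use Jensen's inequality against the normalised measure $t^{-1}ds$ on $[0,t]$,
$$
\Big(\int_0^t\frac{ds}{m(u,s)}\Big)^{1+\frac{\delta}{2}}\le t^{\frac{\delta}{2}}\int_0^t\frac{ds}{m(u,s)^{1+\frac{\delta}{2}}} ,
$$
so that after a second use of Tonelli the whole expression is bounded by
$$
C_\delta\,t^{\frac{\delta}{2}}\int_0^t\Big(\int_0^b\E\,m(u,s)^{-1-\frac{\delta}{2}}\,du\Big)ds .
$$

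Now the inner integral is exactly the object estimated in Proposition~\ref{proposition_estimation_of_m}, which I would apply with $\beta=1+\frac{\delta}{2}$ and $p=2+\eps$. The admissibility condition $0<\beta<\frac32-\frac1p$ reads $1+\frac\delta2<\frac32-\frac1{2+\eps}$, which rearranges to exactly the hypothesis $\eps>\frac{2\delta}{1-\delta}$; moreover for $\delta\in[0,1)$ we have $\beta\in[1,\frac32)$, so the proposition applies and yields $\int_0^b\E\,m(u,s)^{-1-\delta/2}\,du\le C_{\delta,\eps,b}\,s^{-1/2}(1+\|g\|_{L_{2+\eps}})$. Since $\int_0^t s^{-1/2}\,ds=2\sqrt t$, the time integration contributes a factor $\sqrt t$, giving a final bound of order $t^{\frac{\delta}{2}}\cdot\sqrt t=t^{\frac{1+\delta}{2}}$ times $1+\|g\|_{L_{2+\eps}}$.

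I expect the genuine work to sit in two places. The quantitative heart is Proposition~\ref{proposition_estimation_of_m}, itself resting on the hitting-time estimate of Lemma~\ref{lemma_estimation_of_p_m}; everything after it is bookkeeping (Tonelli, BDG, Jensen). The one real subtlety is the behaviour of the time integral near $s=0$: the per-time bound $\int_0^b\E\,m^{-\beta}\,du$ blows up like $s^{-1/2}$, and this singularity is what fixes the exponent of $t$ in the conclusion. Carrying the computation through gives $t^{(1+\delta)/2}$; obtaining the displayed exponent $t^{1+\delta/2}$ would instead require the stronger, $g$-independent estimate $\int_0^b\E\,m^{-\beta}\,du\le C(1+\|g\|_{L_{2+\eps}})$ uniformly in $s$, which cannot hold for highly split initial data (already for $\delta=0$ and $g(u)=u$ the left-hand side of the statement is of order $t^{2/3}$, so any $g$-uniform bound must have time-exponent at most $\tfrac23$). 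Finally I would check at the outset that $g\in\St$ indeed gives $g\in L_{2+\eps}^{\uparrow}[0,b]$, so that Proposition~\ref{proposition_estimation_of_m} is legitimately available.
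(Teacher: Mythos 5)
Your argument is essentially the paper's own proof: for $g\in\St$ the paper writes $X(t)=\sum_{k}x_k(t)\I_{\pi_k}$ and bounds each particle by the Burkholder--Davis--Gundy inequality, pulls out $t^{\delta/2}$ by Jensen, interchanges expectation and integration, and invokes Proposition~\ref{proposition_estimation_of_m} with $\beta=1+\tfrac{\delta}{2}$ and $p=2+\eps$ under exactly the condition $1+\tfrac{\delta}{2}<\tfrac32-\tfrac1{2+\eps}$, i.e. $\eps>\tfrac{2\delta}{1-\delta}$; your use of the $D((0,b),C[0,T])$ modification in place of the finite-sum representation is a cosmetic difference. Your remark about the exponent is also correct, and it applies to the paper's proof as well: the paper's displayed chain ends at $t^{\delta/2}\int_0^t\bigl(\E\int_0^b m^{-1-\delta/2}(u,s)\,du\bigr)ds$, and inserting the $s^{-1/2}$ bound of Proposition~\ref{proposition_estimation_of_m} yields $t^{\frac{1+\delta}{2}}$, so the final inequality of the paper's proof does not follow from the line preceding it. Your argument that the stated exponent cannot be repaired is sound: the remark after Theorem~\ref{theorem_existence} would transport the bound to $g(u)=u$ via Fatou, whereas for that flow Jensen and Proposition~\ref{prp_estimation_Em} give $\E\bigl(X(u,t)-u\bigr)^2=\E\int_0^t m(u,s)^{-1}ds\ge\int_0^t\bigl(\E m(u,s)\bigr)^{-1}ds\gtrsim t^{2/3}$, which exceeds $Ct$ for small $t$. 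The discrepancy is harmless downstream, since Corollary~\ref{corollary_estimation_of_z} and the tightness argument only use boundedness of the right-hand side over $t\in[0,T]$, but the exponent in the statement should read $\tfrac{1+\delta}{2}$, which is what both your proof and the paper's actually establish.
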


\begin{proof}
Without loss of generality, let $X$ be defined by~\eqref{f_connection_X_x}. Using the Burkholder-Davis-Gundy inequality and Proposition~\ref{proposition_estimation_of_m}, we obtain
\begin{align*}
\E\sup_{s\in[0,t]}\int_0^b|X(u,s)&-g(u)|^{2+\delta}du=\E\sup_{s\in[0,t]}\left(\sum_{k=1}^dm_k^0|x_k(s)-x_k^0|^{2+\delta}\right)\\
&\leq\sum_{k=1}^dm_k^0\E\sup_{s\in[0,t]}|x_k(s)-x_k^0|^{2+\delta}\leq
\sum_{k=1}^dm_k^0\E\left(\int_0^t\frac{ds}{m_k(s)}\right)^{1+\frac{\delta}{2}}\\
&\leq t^{\frac{\delta}{2}}\sum_{k=1}^dm_k^0\E\int_0^t\frac{ds}{m_k^{1+\frac{\delta}{2}}(s)}=
t^{\frac{\delta}{2}}\int_0^t\left(\E\int_0^b\frac{du}{m^{1+\frac{\delta}{2}}(u,s)}\right)ds\\
&\leq C_{\delta,\eps,b}t^{1+\frac{\delta}{2}}\left(1+\|g\|_{L_{2+\eps}}\right),
\end{align*}
if $1+\frac{\delta}{2}<\frac{3}{2}-\frac{1}{2+\eps}$. The proposition is proved.
\end{proof}

\begin{cor}\label{corollary_estimation_of_z}
 Under the assumptions of Proposition~\ref{proposition_estimation_of_z},
$$
\E\sup_{t\in[0,T]}\|X(t)\|_{L_{2+\delta}}^{2+\delta}\leq C_{\delta,\eps,b}\left(1+\|g\|_{L_{2+\delta}}^{2+\delta}+\|g\|_{L_{2+\eps}}\right).
$$
\end{cor}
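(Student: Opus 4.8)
The plan is to obtain the bound directly from Proposition~\ref{proposition_estimation_of_z} by splitting $X(t)$ into the increment $X(t)-g$ and the deterministic starting value $g$. First I would apply the triangle inequality in $L_{2+\delta}$ to get $\|X(t)\|_{L_{2+\delta}}\leq\|X(t)-g\|_{L_{2+\delta}}+\|g\|_{L_{2+\delta}}$ for every $t\in[0,T]$. Since $2+\delta\geq 1$, the elementary convexity inequality $(a+b)^p\leq 2^{p-1}(a^p+b^p)$ applied with $p=2+\delta$ then yields the pointwise estimate $\|X(t)\|_{L_{2+\delta}}^{2+\delta}\leq 2^{1+\delta}\big(\|X(t)-g\|_{L_{2+\delta}}^{2+\delta}+\|g\|_{L_{2+\delta}}^{2+\delta}\big)$.

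Next I would take the supremum over $t\in[0,T]$ and then the expectation, using that the term $\|g\|_{L_{2+\delta}}^{2+\delta}$ is deterministic and independent of $t$. This produces
$$
\E\sup_{t\in[0,T]}\|X(t)\|_{L_{2+\delta}}^{2+\delta}\leq 2^{1+\delta}\left(\E\sup_{t\in[0,T]}\|X(t)-g\|_{L_{2+\delta}}^{2+\delta}+\|g\|_{L_{2+\delta}}^{2+\delta}\right).
$$
Finally, I would invoke Proposition~\ref{proposition_estimation_of_z} at the fixed time $t=T$ to bound the first term on the right by $C_{\delta,\eps,b}\,T^{1+\frac{\delta}{2}}\big(1+\|g\|_{L_{2+\eps}}\big)$. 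Since $T$ is a fixed constant, the factor $T^{1+\frac{\delta}{2}}$ is absorbed into $C_{\delta,\eps,b}$, and after renaming the constant the estimate collapses to the claimed
$$
\E\sup_{t\in[0,T]}\|X(t)\|_{L_{2+\delta}}^{2+\delta}\leq C_{\delta,\eps,b}\left(1+\|g\|_{L_{2+\delta}}^{2+\delta}+\|g\|_{L_{2+\eps}}\right).
$$

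I do not expect any genuine obstacle here: the corollary is a routine consequence of the proposition, and the only minor points to record are that the admissibility condition $\eps>\frac{2\delta}{1-\delta}$ (equivalently $1+\frac{\delta}{2}<\frac{3}{2}-\frac{1}{2+\eps}$) is inherited from Proposition~\ref{proposition_estimation_of_z} and is exactly what permits its application, and that all constants depend only on $\delta$, $\eps$, $b$ together with the fixed horizon $T$, as required by the statement.
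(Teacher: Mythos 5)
Your argument is correct and is precisely the routine deduction the paper has in mind: the corollary is stated without proof as an immediate consequence of Proposition~\ref{proposition_estimation_of_z}, obtained exactly by the triangle inequality, the convexity bound $(a+b)^{2+\delta}\leq 2^{1+\delta}(a^{2+\delta}+b^{2+\delta})$, and an application of the proposition at $t=T$, with $T$ absorbed into the constant. Nothing further is needed.
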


\begin{rem}\label{remark_estimation_of_Pi_y}
Let $X$ be as in Proposition~\ref{proposition_estimation_of_z}. It is easily seen that
$$
|\Pi_{X(t)}|=\int_0^1\frac{du}{m(u,t)}
$$
and consequently, by Proposition~\ref{proposition_estimation_of_m}, for all $\eps>0$
$$
\E|\Pi_{X(t)}|\leq\frac{C_{\eps}}{\sqrt{t}}\left(1+\|g\|_{L_{2+\eps}}\right), \quad t\in(0,T].
$$
\end{rem}

\section{Construction of the particle system in general case of initial mass distribution}\label{construction}

\subsection{The tightness of $\{P_{g_{\alpha}},\ \alpha\in I\}$}

In this subsection we show that the family of distributions $\{P_{g_{\alpha}},\ \alpha\in I\}$ is tight under the assumption $\{\|g_{\alpha}\|_{L_{2+\eps}}\}$ is bounded for some $\eps>0$, where $I$ is a set of indices. First we construct suitable compacts in $L_2^{\uparrow}$.

\begin{lem}\label{lemma_compact_in_L2}
 For every $M>0$ and $\delta>0$ the set $K_M=\{g\in L_2^{\uparrow}:\ \|g\|_{L_{2+\delta}}\leq M\}$ is compact in $L_2^{\uparrow}$.
\end{lem}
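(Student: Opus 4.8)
The plan is to exploit that $L_2^{\uparrow}$ is a Polish space, so it suffices to prove that $K_M$ is \emph{sequentially} compact: given an arbitrary sequence $(g_n)\subset K_M$, I will extract a subsequence converging in $L_2$ to some element of $K_M$. Working throughout with the $D^{\uparrow}$-representatives, the two ingredients are Helly's selection theorem, which yields a pointwise limit because the functions are monotone, and a uniform-integrability argument, which upgrades the pointwise convergence to convergence in $L_2$ using the stronger bound in $L_{2+\delta}$.

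First I would record a pointwise bound on compact subintervals. Since each $g_n$ is non-decreasing, for $u\in(a,b)$ with $g_n(u)>0$ one has $g_n(v)\ge g_n(u)$ for $v\in[u,b]$, hence $M^{2+\delta}\ge\int_u^b|g_n(v)|^{2+\delta}\,dv\ge(b-u)\,g_n(u)^{2+\delta}$; the symmetric estimate on $[a,u]$ controls $g_n(u)$ from below. Consequently, for every $\eta>0$,
$$
\sup_{n}\ \sup_{u\in[a+\eta,\,b-\eta]}|g_n(u)|\le M\,\eta^{-\frac{1}{2+\delta}}<\infty,
$$
so on each interval $[a+\eta,b-\eta]$ the sequence is uniformly bounded.

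Applying Helly's selection theorem on $[a+\tfrac1k,b-\tfrac1k]$ for each $k$ and passing to a diagonal subsequence $(g_{n_j})$, I obtain pointwise convergence $g_{n_j}(u)\to g(u)$ for all $u\in(a,b)$, where $g$ is non-decreasing as a pointwise limit of non-decreasing functions. Replacing $g$ by its c\`{a}dl\'{a}g modification puts it in $D^{\uparrow}$, and Fatou's lemma gives $\int_a^b|g|^{2+\delta}\le\liminf_j\int_a^b|g_{n_j}|^{2+\delta}\le M^{2+\delta}$, so in particular $g$ is finite a.e., lies in $L_2^{\uparrow}$, and satisfies $\|g\|_{L_{2+\delta}}\le M$, i.e.\ $g\in K_M$.

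Finally I would upgrade the a.e.\ convergence to $L_2$ convergence. Because $\bigl\||g_{n_j}|^2\bigr\|_{L_{(2+\delta)/2}}=\|g_{n_j}\|_{L_{2+\delta}}^2\le M^2$ with exponent $(2+\delta)/2>1$ on the finite measure space $[a,b]$, the family $\{|g_{n_j}|^2\}_j$ is uniformly integrable; together with $g\in L_2$ this makes $\{|g_{n_j}-g|^2\}_j$ uniformly integrable as well, and since $g_{n_j}\to g$ a.e., Vitali's convergence theorem yields $\|g_{n_j}-g\|_{L_2}\to0$. This gives convergence in $L_2^{\uparrow}$ with limit in $K_M$, hence sequential and therefore topological compactness. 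I expect the main obstacle to be the endpoint behaviour, since the uniform bound degenerates as $\eta\to0$ and Helly cannot be invoked on all of $(a,b)$ at once, which is precisely what the exhaustion/diagonal step resolves; the genuine analytic content is the passage from pointwise to $L_2$ convergence, for which the strictly larger exponent $2+\delta$ (rather than mere $L_2$-boundedness) is essential to secure uniform integrability.
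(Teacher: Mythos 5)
Your proof is correct and follows essentially the same route as the paper: extract an a.e.\ convergent subsequence using monotonicity (Helly), then use the $L_{2+\delta}$ bound to get uniform integrability of the squares and upgrade to $L_2$ convergence. The paper compresses the selection step and finishes via convergence of $L_2$ norms plus a.e.\ convergence rather than Vitali, but these are interchangeable; your explicit interior pointwise bound and diagonal argument are a careful spelling-out of what the paper leaves implicit.
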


\begin{proof}
Let $\{g_n\}_{n\geq 1}$ be a sequence in $K_M$. Since $\{g_n\}_{n\geq 1}\subset L_2^{\uparrow}$ is bounded, one can choose a subsequence $\{g_{n'}\}$ that converges a.e. to a nondecreasing function $g$, possible taking infinite values. Next, by the de la Vallee-Poussin theorem (see Theorem~1.8.~\cite{Liptser:2001}), $\{g_{n'}^2\}$ is uniformly integrable. Therefore, $\|g_{n'}^2\|_{L_1}=\|g_{n'}\|_{L_2}^2\to \|g\|_{L_2}^2$. It implies the convergence of $\{g_{n'}\}$ to $g$ in $L_2$, by Lemma~1.32~\cite{Kallenberg:2002}. This finishes the proof.
\end{proof}

\begin{prp}\label{proposition_compactness}
 Let $\{g_n,\ n\geq 1\}\subset\St$ be bounded in $L_{2+\eps}$ for some $\eps>0$. Then the family of the distributions $\{P_{g_n},\ n\geq 1\}$ is tight in $\Pp$.
\end{prp}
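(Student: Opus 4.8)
The plan is to verify tightness in $C([0,T],L_2^{\uparrow})$ through the standard tightness criterion for $C([0,T],E)$ with $E$ Polish: the family $\{P_{g_n}\}$ is tight provided (a) a compact containment condition holds, i.e.\ for every $\eta>0$ there is a compact $K\subset L_2^{\uparrow}$ with $\inf_n\p\{X_n(t)\in K\ \forall t\in[0,T]\}\geq 1-\eta$, and (b) an oscillation condition holds, i.e.\ for every $\rho>0$, $\lim_{h\to 0}\sup_n\p\{w(X_n,h)>\rho\}=0$, where $w(X_n,h)=\sup_{|s-t|\leq h}\|X_n(s)-X_n(t)\|_{L_2}$ and $X_n$ denotes an $L_2^{\uparrow}$-valued continuous square integrable martingale with quadratic variation $\int_0^\cdot\pr_{X_n(s)}ds$ starting from $g_n$ (so $\law\{X_n\}=P_{g_n}$). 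I would set $B=\sup_n\|g_n\|_{L_{2+\eps}}<\infty$ and fix once and for all $\delta\in(0,1)$ with $\delta<\frac{\eps}{2+\eps}$; this is exactly the condition $\eps>\frac{2\delta}{1-\delta}$ needed in Corollary~\ref{corollary_estimation_of_z}, and it also gives $1+\frac{\delta}{2}<\frac{3}{2}-\frac{1}{2+\eps}$, so that Proposition~\ref{proposition_estimation_of_m} applies with $\beta=1+\frac{\delta}{2}$ and $p=2+\eps$.

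For (a) I would take the compacts $K_M=\{f\in L_2^{\uparrow}:\ \|f\|_{L_{2+\delta}}\leq M\}$ from Lemma~\ref{lemma_compact_in_L2}. Since $[0,b]$ has finite length and $2+\delta<2+\eps$, H\"older's inequality gives $\|g_n\|_{L_{2+\delta}}\leq C_b\|g_n\|_{L_{2+\eps}}\leq C_bB$, so the right-hand side of Corollary~\ref{corollary_estimation_of_z} is bounded uniformly in $n$. Chebyshev's inequality then yields
$$
\p\left\{\sup_{t\in[0,T]}\|X_n(t)\|_{L_{2+\delta}}>M\right\}\leq\frac{\E\sup_{t\in[0,T]}\|X_n(t)\|_{L_{2+\delta}}^{2+\delta}}{M^{2+\delta}}\leq\frac{C_{\delta,\eps,b}(1+B^{2+\delta}+B)}{M^{2+\delta}},
$$
which is $<\eta$ for $M$ large, uniformly in $n$; this is the required compact containment in $K_M$.

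The oscillation estimate (b) is the main obstacle, because the natural bound on increments carries the factor $r^{-1/2}$ from Proposition~\ref{proposition_estimation_of_m}, which is singular at the origin, and a crude union bound over $\sim T/h$ grid intervals would not decay in $h$. I would proceed as follows. For $0\leq s<s+h\leq T$ the martingale increment has trace quadratic variation $\mathrm{tr}(\langle X_n\rangle_{s+h}-\langle X_n\rangle_s)=\int_s^{s+h}\mathrm{tr}\,\pr_{X_n(r)}\,dr=\int_s^{s+h}|\Pi_{X_n(r)}|\,dr$, using that $\pr_{X_n(r)}$ is an orthogonal projection of rank $|\Pi_{X_n(r)}|=\int_0^b\frac{du}{m(u,r)}$ (Remark~\ref{remark_estimation_of_Pi_y}). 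By the Burkholder--Davis--Gundy inequality in the Hilbert space $L_2$, applied to the increments of $X_n$ with exponent $2+\delta$, followed by Jensen's inequality in the time variable to extract a factor $h^{\delta/2}$,
$$
\E\sup_{u\in[s,s+h]}\|X_n(u)-X_n(s)\|_{L_2}^{2+\delta}\leq C_\delta\,\E\left(\int_s^{s+h}|\Pi_{X_n(r)}|\,dr\right)^{1+\frac{\delta}{2}}\leq C_\delta\,h^{\frac{\delta}{2}}\int_s^{s+h}\E|\Pi_{X_n(r)}|^{1+\frac{\delta}{2}}\,dr.
$$
Now Jensen's inequality on $[0,b]$ gives $|\Pi_{X_n(r)}|^{1+\frac{\delta}{2}}\leq b^{\frac{\delta}{2}}\int_0^b\frac{du}{m^{1+\frac{\delta}{2}}(u,r)}$, so by Proposition~\ref{proposition_estimation_of_m} with $\beta=1+\frac{\delta}{2}$ we obtain $\E|\Pi_{X_n(r)}|^{1+\frac{\delta}{2}}\leq\frac{C(1+B)}{\sqrt r}$ uniformly in $n$.

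Finally I would fix a partition $0=t_0<\dots<t_N=T$ of mesh $h$ and use $w(X_n,h)\leq 3\max_i\sup_{u\in[t_i,t_{i+1}]}\|X_n(u)-X_n(t_i)\|_{L_2}$ together with the union bound and Chebyshev. Combining the last two displays with $\int_{t_i}^{t_{i+1}}\frac{dr}{\sqrt r}=2(\sqrt{t_{i+1}}-\sqrt{t_i})$, the per-interval bounds telescope,
$$
\p\{w(X_n,h)>\rho\}\leq\frac{3^{2+\delta}}{\rho^{2+\delta}}\sum_{i=0}^{N-1}\E\sup_{u\in[t_i,t_{i+1}]}\|X_n(u)-X_n(t_i)\|_{L_2}^{2+\delta}\leq\frac{C(1+B)}{\rho^{2+\delta}}\,h^{\frac{\delta}{2}}\,2\sqrt T,
$$
which tends to $0$ as $h\to 0$ uniformly in $n$. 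This gives (b), and together with (a) and Prokhorov's theorem it proves tightness of $\{P_{g_n}\}$. The crux is precisely this last step: the Jensen-in-time factor $h^{\delta/2}$ supplies the decay while the telescoping sum $\sum_i(\sqrt{t_{i+1}}-\sqrt{t_i})=\sqrt T$ absorbs the integrable singularity of $\E|\Pi_{X_n(r)}|$ at $r=0$, so that the bound neither blows up under the union bound nor loses uniformity in $n$.
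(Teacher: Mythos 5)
Your proof is correct, and step (a) coincides with the paper's argument (the compacts $K_M$ from Lemma~\ref{lemma_compact_in_L2}, Corollary~\ref{corollary_estimation_of_z} and Chebyshev). For the second half, however, you take a genuinely different and strictly stronger route. The paper invokes Jakubowski's tightness criterion, which replaces any modulus-of-continuity estimate on the $L_2$-valued process by tightness of the one-dimensional projections $(X_n(\cdot),h)$, $h\in L_2$; since each such projection is a continuous martingale with $\frac{d}{dt}\langle(X_n(\cdot),h)\rangle_t=\|\pr_{X_n(t)}h\|_{L_2}^2\leq\|h\|_{L_2}^2$, the Aldous criterion applies immediately and the singularity of $\E|\Pi_{X_n(r)}|\sim r^{-1/2}$ near $r=0$ never enters. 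You instead prove a uniform modulus of continuity in the $L_2$-norm itself, which forces you to confront that singularity: your combination of the Hilbert-space Burkholder--Davis--Gundy inequality with trace quadratic variation $\int|\Pi_{X_n(r)}|\,dr$, Jensen in time to extract $h^{\delta/2}$, Jensen on $[0,b]$ to reduce $\E|\Pi_{X_n(r)}|^{1+\delta/2}$ to Proposition~\ref{proposition_estimation_of_m} with $\beta=1+\frac{\delta}{2}$ (your constraint $\delta<\frac{\eps}{2+\eps}$ is exactly what makes that proposition applicable, and Proposition~\ref{prp_modif_from_D_for_mart} justifies applying it to $X_n$ with $g_n\in\St$), and finally the telescoping $\sum_i\int_{t_i}^{t_{i+1}}r^{-1/2}dr=2\sqrt{T}$ to survive the union bound, is a correct and rather delicate way to do this. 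What your approach buys is a quantitative equicontinuity estimate $\p\{w(X_n,h)>\rho\}\leq C\rho^{-(2+\delta)}h^{\delta/2}$ uniform in $n$, which is more information than tightness; what the paper's approach buys is brevity and robustness, since it needs nothing beyond the trivial bound $\|\pr_{X_n(t)}h\|_{L_2}\leq\|h\|_{L_2}$.
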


\begin{proof}
Let $X_n(t)$, $t\in[0,T]$, $n\geq 1$, be random elements in $C([0,T],L_2^{\uparrow})$ with distributions $P_{g_n}$, $n\geq 1$. To prove the proposition, we will use Jakubowski's tightness criterion~\cite{Jakubowski:1986}. We will check that
\begin{enumerate}
 \item[(J1)] for every $\gamma>0$ there exists a compact $K\subset L_2^{\uparrow}$, such that
$$
\p\{\exists t\in[0,T],\ X_n(t)\notin K\}\leq\gamma,\quad n\geq 1;
$$

 \item[(J2)] for every $h\in L_2$ the sequence $\{(X_n(\cdot),h)\}_{n\geq 1}$ is
tight in $C([0,T],\R)$.
\end{enumerate}

Property $(J1)$ follows from Corollary~\ref{corollary_estimation_of_z}, Lemma~\ref{lemma_compact_in_L2} and Chebyshev's inequality. In fact, choosing $\delta>0$ such that $\frac{2\delta}{1-\delta}\vee\delta<\eps$ and setting $K_M=\{g\in L_2^{\uparrow}:\ \|g\|_{L_{2+\delta}}\leq M\}$, we have
\begin{align*}
\p\{\exists t\in[0,T],\ &X_n(t)\notin K_M\}=\p\left\{\sup_{t\in[0,T]}\|X_n(t)\|_{L_{2+\delta}}>M\right\}\\
&\leq\frac{1}{M^{2+\delta}}\E\sup_{t\in[0,T]}\|X_n(t)\|_{L_{2+\delta}}^{2+\delta}\\
&\leq \frac{C_{\delta,\eps}}{M^{2+\delta}}\left(1+\|g_n\|_{L_{2+\delta}}^{2+\delta}+\|g_n\|_{L_{2+\eps}}\right)\leq\gamma
\end{align*}
for large enough $M$ and all $n\geq 1$.

Since for all $h\in L_2$ the process $(X_n(\cdot),h)$ is a continuous square integrable martingale with the quadratic variation
$$
\langle(X_n(\cdot),h)\rangle_t=\int_0^t\|\pr_{X_n(s)}h\|^2_{L_2}ds,\quad t\in[0,T],
$$
and $\|\pr_{X_n(t)}h\|_{L_2}\leq\|h\|_{L_2}$, $t\in[0,T]$, the Aldous tightness criterion (see e.g. Theorem~3.6.4.~\cite{Dawson:1993}) easily implies $(J2)$. It completes the proof of the proposition.
\end{proof}

\subsection{Some limit properties}

In this subsection we show that under the assumption $\{|\Pi_{g_n}|,\ n\geq 1\}$ is bounded, each limit point of the set $\{P_{g_n}\}_{n\geq 1}$ is $P_{g}$ for some $g\in\St$.
\begin{lem}\label{lemma_property_of_linit_point_for_St}
 Let $X_n$, $n\geq 1$, be random elements in $C([0,T],L_2^{\uparrow}[0,b])$ with distributions $P_{g_n}$, where $g_n\in\St$, $n\geq 1$, and $\{|\Pi_{g_n}|,\ n\geq 1\}$ is bounded. If the sequence $\{X_n\}_{n\geq 1}$ converges to $X$ in distribution, then $\law\{X\}=P_{X(0)}$.
\end{lem}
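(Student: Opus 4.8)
The plan is to verify the hypotheses of Proposition~\ref{proposition_distribution_of_mart} for the limit $X$ and then invoke it. First I would pass to a subsequence and use the Skorohod representation theorem to realize the convergence $X_n\to X$ almost surely in $C([0,T],L_2^{\uparrow}[0,b])$, so that $\sup_{t\in[0,T]}\|X_n(t)-X(t)\|_{L_2}\to 0$ a.s. Since $X_n(0)=g_n$ is deterministic, the assumed convergence in distribution forces $g_n\to g$ in $L_2^{\uparrow}$ for some fixed $g$, whence $X(0)=g$ a.s. The set $\St_{\le d}=\{f\in L_2^{\uparrow}:\ |\Pi_f|\le d\}$, with $d=\sup_n|\Pi_{g_n}|<\infty$, is closed in $L_2^{\uparrow}$ (a monotone $L_2$-limit of step functions taking at most $d$ values again takes at most $d$ values, as one sees by testing at continuity points of the limit), so $g\in\St$. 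It then remains to show that $X$ is a continuous square integrable martingale with quadratic variation $\langle X\rangle_t=\int_0^t\pr_{X(s)}ds$; Proposition~\ref{proposition_distribution_of_mart} will give $\law\{X\}=P^{X(0)}=P_{g}=P_{X(0)}$.

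The martingale part is routine. Corollary~\ref{corollary_estimation_of_z} bounds $\sup_n\E\sup_{t}\|X_n(t)\|_{L_{2+\delta}}^{2+\delta}$, so Fatou gives $\E\|X(t)\|_{L_2}^2<\infty$ and makes the family $\{(X_n(t),h):n\ge 1\}$ uniformly integrable for each fixed $t$ and $h\in L_2$. Passing to the limit in the identity $\E[((X_n(t),h)-(X_n(s),h))\Phi]=0$, valid for every bounded continuous $\F_s$-measurable functional $\Phi$ of the path, I would conclude that $(X(\cdot),h)$ is a continuous martingale for each $h\in L_2$, i.e.\ $X$ is an $L_2^{\uparrow}$-valued continuous square integrable martingale.

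The heart of the matter is identifying the quadratic variation, for which I would pass to the limit in the martingales $(X_n(t),h)^2-\int_0^t\|\pr_{X_n(s)}h\|_{L_2}^2ds$. One inequality is soft: because $X_n(s)\to X(s)$ in $L_2$ and each $X_n(s)$ is monotone, the level-set partition $\Pi_{X_n(s)}$ asymptotically refines $\Pi_{X(s)}$ away from the finitely many jump points of $X(s)$, and monotonicity of the $L_2$-norm of a conditional expectation under refinement yields $\liminf_n\|\pr_{X_n(s)}h\|_{L_2}^2\ge\|\pr_{X(s)}h\|_{L_2}^2$. Combining this with the u.c.p.\ convergence of the quadratic variations of the convergent $L_2$-bounded continuous martingales $(X_n(\cdot),h)$ and with Fatou, I obtain $\langle(X(\cdot),h)\rangle_t\ge\int_0^t\|\pr_{X(s)}h\|_{L_2}^2ds$ for every $h$, i.e.\ the a.s.\ operator inequality $\langle X\rangle_t\succeq\int_0^t\pr_{X(s)}ds$.

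The remaining, and main, obstacle is the reverse inequality, which amounts to excluding that the pre-limit clusters stay strictly finer than the limit clusters on a set of times of positive measure. I would reduce this to a scalar statement by taking traces: since $\mathrm{tr}\,\pr_f=|\Pi_f|$ and, by the energy identity applied to $X_n$, $\E\|X_n(t)\|_{L_2}^2-\|g_n\|_{L_2}^2=\E\int_0^t|\Pi_{X_n(s)}|\,ds$, passing to the limit (using the $(2+\delta)$-moment bound for the left side) gives $\mathrm{tr}\,\E\langle X\rangle_t=\lim_n\E\int_0^t|\Pi_{X_n(s)}|\,ds$. Together with the operator inequality above and the fact that two comparable nonnegative trace-class operators with equal trace coincide, the identity $\langle X\rangle_t=\int_0^t\pr_{X(s)}ds$ follows once I establish $\limsup_n\E\int_0^t|\Pi_{X_n(s)}|\,ds\le\E\int_0^t|\Pi_{X(s)}|\,ds$. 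This last bound is where I expect the real work to lie: it says that the ``spurious'' extra clusters of $X_n$, sitting at nearly equal heights, must coalesce quickly. I would prove it with the meeting-time estimate of Lemma~\ref{lemma_estimation_of_p_m}, together with the representation $|\Pi_{X(s)}|=\int_0^b du/m(u,s)$ of Remark~\ref{remark_estimation_of_Pi_y}, noting that the gap between two adjacent pre-limit particles is a nonnegative continuous martingale whose quadratic variation accumulates at a rate bounded below (the masses stay above the fixed piece-lengths of $g$), so a vanishing initial gap forces a vanishing time until coalescence; this controls the time-measure on which $\Pi_{X_n(s)}$ is strictly finer than $\Pi_{X(s)}$. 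The martingale passage and the semicontinuity direction are routine, and this coalescence estimate is the crux.
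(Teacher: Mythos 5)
Your route is genuinely different from the paper's. The paper exploits the uniform bound on $|\Pi_{g_n}|$ to reduce everything to a single finite system: after extracting subsequences it fixes the number of clusters $q$, passes to limits of the cluster masses and positions, writes $X_n(t)=\sum_k x_k^n(t)\I_{\pi_k^n}$, and identifies the limit by verifying $(F1)$--$(F4)$ for the finitely many coordinate processes; the quadratic covariations are read off from the explicit formula $\langle x_k^n,x_l^n\rangle_t=\int_0^t\I_{\{\tau_{k,l}^n\leq s\}}m_k^n(s)^{-1}ds$ together with convergence of the meeting times $\tau_{k,l}^n$ (imported from Lemma~2.10 of~\cite{Konarovskyi:2014:arx}) and dominated convergence. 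You instead work with the operator-valued quadratic variation, getting one operator inequality from semicontinuity of $\|\pr_{\cdot}h\|_{L_2}$ under $L_2$-convergence of monotone step functions and closing the argument by matching traces via the energy identity. That architecture is sound, and the steps you call routine (martingale property of the limit, u.c.p.\ convergence of the scalar quadratic variations, uniform integrability from Corollary~\ref{corollary_estimation_of_z}, and the fact that comparable nonnegative trace-class operators with equal traces coincide) do go through.

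The genuine gap is at the crux you yourself flag: the inequality $\limsup_n\E\int_0^t|\Pi_{X_n(s)}|\,ds\leq\E\int_0^t|\Pi_{X(s)}|\,ds$ is not proved, and the mechanism you sketch covers only one of the three ways $X_n$ can persistently carry more clusters than $X$. Writing $|\Pi_{X_n(s)}|=q-\sum_{k}\I_{\{\tau^n_{k,k+1}\leq s\}}$, the excess comes from: (a) adjacent initial clusters of $g_n$ whose height gap vanishes as $n\to\infty$ --- your ``nearly equal heights'' case; (b) initial clusters whose mass $m_k^{0,n}$ vanishes but whose gaps to both neighbours stay of order one --- here fast coalescence is forced not by a small gap but by the exploding diffusion rate $1/m_k^{0,n}$, a different estimate; and (c) non-spurious pairs that meet in the limit only at a positive random time $\tau_{i,j}>0$ --- for these the gap at $t=0$ is of order one, and one must show that once the pre-limit gap first drops below $\eps_n\downarrow 0$ (a stopping time), coalescence follows within $o(1)$; this requires running your hitting-time bound after an optional-stopping reduction and is exactly the content of the meeting-time convergence the paper cites. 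All three cases are indeed handled by the Dambis--Dubins--Schwarz/reflection bound behind Lemma~\ref{lemma_estimation_of_p_m}, using only that the quadratic-variation rate of the gap between two distinct clusters is at least $2/b$ (not, as you write, that the masses stay above the piece-lengths of $g$ --- they do not in case (b)). So the plan is completable, but as written the key inequality rests on a heuristic that addresses case (a) only.
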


\begin{proof}
Let $\Pi_{g_n}=\{\pi_1^n,\ldots,\pi_{q_n}^n\}$, where elements of $\pi_k^n$ are less then elements of $\pi_{k+1}^n$, $k\in[q_n-1]$. Since $\{q_n\}_{n\geq 1}$ is bounded, there exist an infinite sequence $\{n'\}$ and $q\in\N$ such that $q_{n'}=q$ for all $n'$. Without loss of generality, we may assume that $q_n=q$ for all $n\geq 1$. Next, setting $m_k^{0,n}=|\pi_k^n|$, $k\in[q]$, $n\geq 1$, and using the boundedness of $\{m_k^{0,n}\}_{n\geq 1}$, we can choose a sequence $\{n'\}$ such that for all $k\in[q]$
$$
m_k^{0,n'}\to m_k^0,
$$
where $m_k^0\in[0,b]$. Again, without loss of generality, we assume that $n'=n$.

Set
\begin{align*}
 I&=\{k\in[q]:\ m_k^0>0\},\\
 I^c&=[q]\setminus I
\end{align*}
and
$$
x_k^{0,n}=\frac{1}{m_k^{0,n}}\int_{\pi_k^n}g_n(u)du,\quad k\in[q],\ \ n\geq 1.
$$
Since $m_k^{0,n}\to m_k^0>0$, $k\in I$, and $\{\|g_n\|_{L_2}\}_{n\geq 1}$ is bounded, it is easy to see that $\{x_k^{0,n}\}_{n\geq 1}$ is also bounded for all $k\in I$. Thus, there exists a sequence $\{n'\}$ such that $x_k^{0,n'}\to x_k^0$ for all $k\in I$. Let again $n'=n$.

Next, let $I=\{k_1,\ldots,k_l\}$, where $k_i<k_{i+1}$, $i\in[l-1]$. We set $a_0=0$, $a_i=a_{i-1}+m_{k_i}$, $i\in[l]$, and $\pi'_i=[a_{i-1},a_i)$, $i\in[l-1]$, $\pi'_l=[a_{l-1},a_l]$. Since $g_n=X_n(0)\to X(0)=g$ in $L_2$, one can show that
$$
g=\sum_{i=1}^lx_{k_i}^0\I_{\pi_i'},
$$
and for all $\pi\in\Pi_g$ there exist $\pi_{i_1},\ldots,\pi_{i_{l'}}$ such that $\pi=\bigcup_{j=1}^{l'}\pi'_{i_j}$.

We set
\begin{align*}
h_k^n&=\frac{1}{m_k^{0,n}}\I_{\pi_k^n},\quad k\in[q],\ \ n\geq 1,\\
h_i&=\frac{1}{m_{k_i}^0}\I_{\pi'_i},\quad i\in[l].
\end{align*}
By the construction of $m_{k_i}$ and $\pi_i'$, $i\in[l]$,  we have $h_{k_i}^n\to h_i$ in $L_2$ for all $i\in[l]$.

Next, using Skorohod's theorem (see Theorem~3.1.8~\cite{Ethier:1986}), we may assume that
$$
X_n\to X\quad \mbox{a.s. in }\ \ C([0,T],L_2^{\uparrow}).
$$
Let
\begin{align*}
 x_k^n(t)&=(h_k^n,X_n(t)),\quad t\in[0,T],\ \ k\in[q],\ \ n\geq 1,\\
 x_i(t)&=(h_i,X(t)),\quad t\in[0,T],\ \ i\in[l].
\end{align*}
We note that, by Proposition~\ref{prp_coalescing}, for all $t\in[0,T]$
\begin{align*}
X_n(t)&=\sum_{k=1}^qx_k^n(t)\I_{\pi_i^n}\quad\mbox{a.s.},\ \ n\geq 1,\\
X(t)&=\sum_{i=1}^lx_i(t)\I_{\pi'_i}\quad\mbox{a.s.}
\end{align*}
It is easy to see that for all $i\in[l]$
$$
x_{k_i}^n\to x_i\quad \mbox{a.s. in }\ \ C([0,T],\R).
$$
Let us show that the family $\{x_i(t),\ t\in[0,T],\ i\in[l]\}$ satisfies $(F1)-(F4)$. First, we show that $x_i$ is a square integrable martingale with respect to the joint filtration $\bigcap_{\eps>0}\sigma(x_i(s), s\leq t+\eps,\ i\in[l])$, $t\in[0,T]$. But since each $x_i$ is continuous, it is enough to check that $x_i$ is a square integrable martingale with respect to $\F_t=\sigma(x_i(s), s\leq t,\ i\in[l])$, $t\in[0,T]$. Let $m_i^n(t)$, $\tau_{i,j}^n$, and $m_i(t)$, $\tau_{i,j}$ are defined as before for $\{x_k^n(t),\ t\in[0,T],\ k\in[q]\}$ and $\{x_i(t),\ t\in[0,T],\ i\in[l]\}$, respectively.

We can estimate the second moment of $x_{k_i}^n(t)$, $i\in[l]$, as follows
$$
\E(x_{k_i}^n(t)-x_{k_i}^{0,n})^2=\E\int_0^t\frac{ds}{m_{k_i}^n(s)}\leq \frac{t}{m_{k_i}^{0,n}}\leq C,
$$
where $C$ is a constant that is independent of $n$, $t$ and $k_i$. By Fatou's lemma $\E x_i^2(t)\leq C$ for all $t\in[0,T]$ and $i\in[l]$. Therefore, Proposition~9.1.17~\cite{Jacod:2003} implies that $x_i$ is a continuous $(\F_t)$-square integrable martingale for any $i\in[l]$. To finish the proof of the lemma, we show that the joint quadratic variation of $x_i$ and $x_j$ satisfies $(F4)$.

By Lemma~2.10~\cite{Konarovskyi:2014:arx}, for each $i,j\in[l]$, $\tau_{k_i,k_j}^n\to\tau_{i,j}$ in probability. Since we can choose a sequence $\{n'\}$ such that $\tau_{k_i,k_j}^{n'}\to\tau_{i,j}$ a.s. for all $i,j=1,\ldots,l$, without loss of generality, we may suppose that $\tau_{k_i,k_j}^{n}\to\tau_{i,j}$ a.s. Let us denote
$$
R=\{t\in[0,T]:\ \exists i,j\ \p\{\tau_{i,j}=t\}>0\}.
$$
It is easily seen that $\leb\{R\}=0$ and for all $t\in R^c=[0,T]\setminus R$
$$
\I_{\{\tau_{k_i,k_j}^n\leq t\}}\to\I_{\{\tau_{i,j}\leq t\}}\quad\mbox{a.s.},\ \ i,j\in[l].
$$
Note, that in
\begin{align*}
 m_{k_i}^n(t)=\sum_{k=1}^qm_k^{0,n}\I_{\{\tau_{k_i,k}^n\leq t\}}=\sum_{j=1}^lm_{k_j}^{0,n}\I_{\{\tau_{k_i,k_j}^n\leq t\}}+\sum_{k\in I^c}m_k^{0,n}\I_{\{\tau_{k_i,k}^n\leq t\}}
\end{align*}
the first term of the right hand side tends to $m_i(t)=\sum_{j=1}^lm_{k_j}^0\I_{\{\tau_{i,j}\leq t\}}$ a.s. and the second term tends to zero. So, $m_{k_i}^n(t)\to m_i(t)$ a.s. for all $i=1,\ldots,l$ and $t\in R^c$. Since $\frac{1}{m_{k_i}^n(t)}\leq \frac{1}{m_{k_i}^{0,n}}\to\frac{1}{m_{k_i}^0}<\infty$, the sequence $\left\{\frac{1}{m_{k_i}^n(t)}\right\}_{n\geq 1}$ is bounded uniformly by $t$ for all $i\in[l]$. Hence, by the dominated convergence theorem, we obtain
$$
\langle x_{k_i}^n,x_{k_j}^n\rangle_t=\int_0^t\frac{\I_{\{\tau_{k_i,k_j}^n\leq s\}}}{m_{k_i}^n(s)}ds\to \int_0^t\frac{\I_{\{\tau_{i,j}\leq s\}}}{m_i(s)}ds\quad\mbox{a.s.}\quad\mbox{for all}\ \ i,j\in[l].
$$
Thus, Lemma~B.11.~\cite{Engelbert:2005} implies that
$$
\langle x_i,x_j\rangle_t=\int_0^t\frac{\I_{\{\tau_{i,j}\leq s\}}}{m_i(s)}ds.
$$
The lemma is proved.
\end{proof}

\begin{prp}\label{proposition_converg_result}
 Let $\{g_n\}_{n\geq 1}\subset\St$, $g_n\to g$ in $L_2$ and for some $\eps>0$ the sequences $\{\|g_n\|_{L_{2+\eps}}\}_{n\geq 1}$, $\{|\Pi_{g_n}|\}_{n\geq 1}$ be bounded. Then $P_{g_n}\to P_g$ in $\Pp$.
\end{prp}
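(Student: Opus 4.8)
The plan is to obtain the convergence $P_{g_n}\to P_g$ by combining the two preparatory results via the standard subsequence principle: relative compactness of $\{P_{g_n}\}$ is supplied by the $L_{2+\eps}$-bound, while the identification of every limit point is supplied by the $|\Pi_{g_n}|$-bound. Recall that in a metric space (and $\Pp$ with the weak topology is metrizable, being the space of probability measures on a Polish space) a sequence converges to a point $Q$ if and only if every subsequence admits a further subsequence converging to $Q$. So it suffices to show that any subsequence of $\{P_{g_n}\}$ has a sub-subsequence converging to $P_g$.

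First I would invoke Proposition~\ref{proposition_compactness}: since $\{\|g_n\|_{L_{2+\eps}}\}_{n\geq 1}$ is bounded, the family $\{P_{g_n}\}_{n\geq 1}$ is tight in $\Pp$, hence relatively compact by Prokhorov's theorem. Thus, given any subsequence, I can extract a sub-subsequence (which, to lighten notation, I again denote $\{P_{g_n}\}$) that converges weakly to some $Q\in\Pp$. Realizing this as random elements, let $X_n$ be a process in $C([0,T],L_2^{\uparrow})$ with $\law\{X_n\}=P_{g_n}$, chosen so that $X_n\to X$ in distribution with $\law\{X\}=Q$.

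Next I would identify $Q$. Because $\{|\Pi_{g_n}|\}_{n\geq 1}$ is bounded, Lemma~\ref{lemma_property_of_linit_point_for_St} applies to this convergent sequence and yields $\law\{X\}=P_{X(0)}$. It then remains to show $X(0)=g$ a.s. The evaluation map $\omega\mapsto\omega(0)$ from $C([0,T],L_2^{\uparrow})$ to $L_2^{\uparrow}$ is continuous, so $X_n(0)\to X(0)$ in distribution; but $X_n(0)=g_n$ is deterministic and $g_n\to g$ in $L_2$, whence $X(0)=g$ a.s. Consequently $Q=P_{X(0)}=P_g$. Since the subsequence was arbitrary, every subsequence of $\{P_{g_n}\}$ has a further subsequence converging to $P_g$, and therefore $P_{g_n}\to P_g$ in $\Pp$.

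The bulk of the analytic work is already packaged in Proposition~\ref{proposition_compactness} (tightness) and Lemma~\ref{lemma_property_of_linit_point_for_St} (characterization of limit points), so the main remaining point — and the only genuinely new step here — is the identification $X(0)=g$ via continuity of time-zero evaluation together with the deterministic convergence of the initial data. I expect this to be routine rather than an obstacle; the one thing to be careful about is confirming that $Q$ is indeed of the form $P_{X(0)}$ for the \emph{correct} random initial datum, which is exactly what the evaluation argument guarantees.
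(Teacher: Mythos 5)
Your proposal is correct and follows essentially the same route as the paper, which likewise combines Proposition~\ref{proposition_compactness} (tightness) with Lemma~\ref{lemma_property_of_linit_point_for_St} (identification of limit points) via the subsequence principle. The only difference is that you spell out the identification $X(0)=g$ through continuity of the time-zero evaluation map, a step the paper leaves implicit.
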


\begin{proof}
 By Proposition~\ref{proposition_compactness} and Lemma~\ref{lemma_property_of_linit_point_for_St}, every subsequence of $\{P_{g_n}\}_{n\geq 1}$ has a subsubsequence converging to $P_g$. It proves the proposition.
\end{proof}

\subsection{Existence in the general case}

In this section we construct an $L_2^{\uparrow}$-valued continuous square integrable martingale with the quadratic variation $(M')$ starting from $g\in L_{2+\eps}^{\uparrow}$ as a weak limit of processes with distributions $P_{g_n}$, $g_n\in\St$.

\begin{thm}\label{theorem_existence}
 Let $\eps>0$. Then for every $g\in L_{2+\eps}^{\uparrow}[0,b]$ there exists an $L_2^{\uparrow}[0,b]$-valued continuous square integrable martingale $X(t)$, $t\in[0,T]$, with the quadratic variation $(M')$ that starts from $g$.
\end{thm}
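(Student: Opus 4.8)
The plan is to realize $X$ as a weak limit of the finite systems $X_n$ with laws $P_{g_n}$ constructed in Section~\ref{finite_system}, and then to identify the limiting law. First I would choose the approximants: fix a refining sequence of finite partitions of $[0,b]$ generating the Borel sets and let $g_n\in\St$ be the conditional expectation of $g$ onto the step functions adapted to the $n$-th partition. Then $g_n$ is non-decreasing, $g_n\to g$ in $L_{2+\eps}$, and by Jensen's inequality $\|g_n\|_{L_{2+\eps}}\le\|g\|_{L_{2+\eps}}$, so $\{\|g_n\|_{L_{2+\eps}}\}$ is bounded. By Proposition~\ref{proposition_compactness} the laws $\{P_{g_n}\}$ are tight; along a subsequence $X_n\to X$ in distribution in $C([0,T],L_2^{\uparrow})$, and by Skorokhod's theorem I may take this convergence a.s. Continuity of evaluation at $t=0$ gives $X(0)=\lim_n g_n=g$, and $X(t)\in L_2^{\uparrow}$ because $L_2^{\uparrow}$ is closed.

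For the martingale property, fix $h\in L_2$. Each $(X_n(\cdot),h)$ is a continuous square integrable martingale, and Corollary~\ref{corollary_estimation_of_z} gives $\sup_n\E\sup_{t\in[0,T]}\|X_n(t)\|^{2+\delta}_{L_{2+\delta}}<\infty$ for a suitable $\delta>0$; since $|(X_n(t),h)|\le\|X_n(t)\|_{L_2}\|h\|_{L_2}$, the family $\{(X_n(t),h)\}_n$ is uniformly integrable. Passing to the limit in $\E[((X_n(t),h)-(X_n(s),h))\Phi]=0$ for bounded $\F_s$-measurable $\Phi$ depending on finitely many coordinates shows $(X(\cdot),h)$ is a continuous square integrable $(\F_t)$-martingale, so $X$ is an $L_2^{\uparrow}$-valued continuous square integrable martingale starting from $g$.

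The remaining and main obstacle is the quadratic variation. By polarization it suffices to prove $\langle(X(\cdot),h)\rangle_t=\int_0^t\|\pr_{X(s)}h\|_{L_2}^2\,ds$ for all $h$. Writing $A_n(t)=\int_0^t\|\pr_{X_n(s)}h\|_{L_2}^2\,ds$, property (M) says $(X_n(t),h)^2-A_n(t)$ is a martingale, so by uniqueness of the Doob--Meyer decomposition the whole question reduces to whether $A_n(t)\to\int_0^t\|\pr_{X(s)}h\|_{L_2}^2\,ds$. The difficulty is that $f\mapsto\pr_f$ is discontinuous on $L_2^{\uparrow}$, since the projection is governed by the level-set partition $\Pi_f$, which may change abruptly under $L_2$-convergence; this is precisely why the bounded-$|\Pi|$ argument of Lemma~\ref{lemma_property_of_linit_point_for_St} does not apply directly, as here $|\Pi_{g_n}|\to\infty$. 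Lower semicontinuity of $f\mapsto\|\pr_f h\|^2_{L_2}$ together with Fatou's lemma yields one inequality, namely $\int_0^t\|\pr_{X(s)}h\|^2_{L_2}\,ds\le\liminf_n A_n(t)$.

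For the matching bound I would exploit that $X$ becomes step-valued instantaneously: by Remark~\ref{remark_estimation_of_Pi_y} and Fatou, $\E|\Pi_{X(t)}|\le C/\sqrt{t}<\infty$, so $X(t)\in\St$ a.s. for every $t>0$, and moreover $\sup_n\E|\Pi_{X_n(\delta)}|\le C/\sqrt{\delta}$, so the particle numbers $\{|\Pi_{X_n(\delta)}|\}_n$ are tight. Fixing $\delta>0$ and restarting at time $\delta$, the finite systems $X_n(\delta+\cdot)$ have law $P_{X_n(\delta)}$ given $\F_\delta$ by Proposition~\ref{proposition_distribution_of_mart}; on the high-probability event $\{|\Pi_{X_n(\delta)}|\le K\}$ the initial partitions are uniformly bounded, so Lemma~\ref{lemma_property_of_linit_point_for_St} identifies the limit law on $[\delta,T]$ as $P_{X(\delta)}$, which carries exactly the quadratic variation $\int_\delta^{\cdot}\pr_{X(s)}\,ds$, and letting $K\to\infty$ removes the truncation. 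Finally, since $0\le\int_0^\delta\|\pr_{X(s)}h\|^2_{L_2}\,ds\le\delta\|h\|^2_{L_2}\to0$ and likewise for $A_n$, sending $\delta\to0$ extends the identity to all of $[0,T]$ and gives $(M')$. I expect this restart step -- the conditioning on $\F_\delta$, the truncation by $K$, and the passage $\delta\to0$ controlling the projection integral near the origin -- to be the most delicate part of the argument.
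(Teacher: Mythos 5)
Your proposal follows essentially the same route as the paper: approximate $g$ by conditional-expectation step functions $g_n$, get tightness from Proposition~\ref{proposition_compactness}, pass to an a.s. Skorohod limit, and identify the quadratic variation by restarting at a positive time $\delta$ where the process is step-valued with a controlled number of plateaus, invoking the bounded-partition convergence result (Lemma~\ref{lemma_property_of_linit_point_for_St} / Proposition~\ref{proposition_converg_result}), and finally letting $\delta\to 0$. The only difference is a technical device — you truncate on the event $\{|\Pi_{X_n(\delta)}|\le K\}$ and let $K\to\infty$, whereas the paper carries $(|\Pi_{X_n(r_k)}|)_k$ and $(\|X_n(r_k)\|_{L_{2+\delta}})_k$ through an augmented Skorohod representation to get a.s. boundedness directly — and this does not change the substance of the argument.
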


\begin{proof}
We set $\s_n=\sigma\left(\left[\frac{k-1}{2^n},\frac{k}{2^n}\right),\ k\in[2^n]\right)$, $n\geq 1$, and
$g_n=\E_{\leb}(g|\s_n)$, where $\E_{\leb}$ denotes the conditional expectation on the probability space $([0,1],\B([0,1]),\leb)$. Since $g_n\to g$ in $L_{2+\eps}$ (see~\cite{Alonso:1998}), the sequence $\{\|g_n\|_{L_{2+\eps}}\}_{n\geq 1}$ is bounded. Therefore the sequence $\{P_{g_n}\}_{n\geq 1}$ is tight in $\Pp$, by Proposition~\ref{proposition_compactness}.

Let $X_n(t)$, $t\in[0,T]$, have distribution $P_{g_n}$ for each $n\geq 1$. Corollary~\ref{corollary_estimation_of_z} and Remark~\ref{remark_estimation_of_Pi_y} imply that for all $r\in(0,T]\cap\Q$ and some fixed $\delta<\frac{\eps}{2+\eps}<\eps$, $\{|\Pi_{X_n(r)}|\}_{n\geq 1}$ and $\{\|X_n(r)\|_{L_{2+\delta}}\}_{n\geq 1}$ are tight in $\R$. Thus, by Proposition~3.2.4.~\cite{Ethier:1986}, the sequence of the random vectors $\{(X_n,(|\Pi_{X_n(r_k)}|)_{k\in\N},(\|X_n(r_k)\|_{L_{2+\delta}})_{k\in\N})\}_{n\geq 1}$ is tight in $C([0,T],L_2^{\uparrow})\times\R^{\N}\times\R^{\N}$, where $\{r_k,\ k\in\N\}=(0,T]\cap\Q$. So, Prokhorov's theorem~\cite{Billingsley:1999} yields that the sequence $\{(X_n,(|\Pi_{X_n(r_k)}|)_{k\in\N},(\|X_n(r_k)\|_{L_{2+\delta}})_{k\in\N})\}_{n\geq 1}$ is relatively compact and consequently, there exists a subsequence $\{n'\}$ such that
$$
(X_{n'},(|\Pi_{X_{n'}(r_k)}|)_{k\in\N},(\|X_{n'}(r_k)\|_{L_{2+\delta}})_{k\in\N})\to (X,(\xi_k)_{k\in\N},(\eta_k)_{k\in\N})
$$
in $C([0,T],L_2^{\uparrow})\times\R^{\N}\times\R^{\N}$ in distribution. For convenience of notation, we suppose that $n'=n$. Next, by Skorohod's theorem (see Theorem~3.1.8~\cite{Ethier:1986}), we may assume that
$$
(X_n,(|\Pi_{X_n(r_k)}|)_{k\in\N},(\|X_n(r_k)\|_{L_{2+\delta}})_{k\in\N})\to (X,(\xi_k)_{k\in\N},(\eta_k)_{k\in\N})\quad\mbox{a.s.}
$$

Since $\{|\Pi_{X_n(r_k)}|\}_{n\geq 1}$ and $\{\|X_n(r_k)\|_{L_{2+\delta}}\}_{n\geq 1}$ are convergent a.s., they are bounded a.s. Thus, the event
$$
\Omega'=\{X_n\to X,\ \{|\Pi_{X_n(r_k)}|\}_{n\geq 1}\ \mbox{and}\ \{\|X_n(r_k)\|_{L_{2+\delta}}\}_{n\geq 1}\ \ \mbox{are bounded},\ k\geq 1\}
$$
has probability 1. It is easy to see that for all $k\geq 1$, $X_n(r_k+t)$, $t\in[0,T-r_k]$, is an $L_2^{\uparrow}$-valued continuous square integrable martingale with the quadratic variation $(M')$. Hence, Proposition~\ref{proposition_distribution_of_mart} implies that
$$
\law\{X_n(r_k+\cdot)\}=P^{X_n(r_k)}\quad\mbox{for all}\ \ n,k\geq 1.
$$
Since for all $\omega\in\Omega'$ the sequence $\{X_n(r_k,\omega)\}_{n\geq 1}$ converges to $X(r_k,\omega)$ and $\{|\Pi_{X_n(r_k,\omega)}|\}_{n\geq 1}$, $\{\|X_n(r_k,\omega)\|_{L_{2+\delta}}\}_{n\geq 1}$ are bounded, using Proposition~\ref{proposition_converg_result} and the dominated convergence theorem, we have
$$
P^{X_n(r_k)}\to P^{X(r_k)}\quad\mbox{as}\ \ n\to\infty.
$$
On the other hand, $P^{X_n(r_k)}\to\law\{X(r_k+\cdot)\}$ and consequently, $X(r_k+\cdot)$ has the distribution $P^{X(r_k)}$. So, from Proposition~\ref{proposition_distribution_of_mart} it follows that $X(r_k+\cdot)$ is an $L_2^{\uparrow}$-valued continuous square integrable martingale with the quadratic variation
$$
\langle(X(r_k+\cdot),h)\rangle_t=\int_0^t\|\pr_{X(r_k+s)}h\|_{L_2}^2ds=\int_{r_k}^{r_k+t}\|\pr_{X(s)}h\|_{L_2}^2ds
$$
for all $h\in L_2$. Since for each $h\in L_2$, $(X_n(\cdot),h)\to(X(\cdot),h)$ a.s. and
$$
\E((X_n(t),h)-(g_n,h))^2=\int_0^t\|\pr_{X_n(s)}h\|_{L_2}^2ds\leq t\|h\|_{L_2}^2,\quad t\in[0,T],
$$
one can show that $(X(\cdot),h)$ is a continuous square integrable martingale and
$$
\langle(X(r_k+\cdot),h)\rangle_t=\langle(X(\cdot),h)\rangle_{r_k+t}-\langle(X(\cdot),h)\rangle_{r_k}.
$$
Therefore,
$$
\langle(X(\cdot),h)\rangle_{r_k+t}=\langle(X(\cdot),h)\rangle_{r_k}+\int_{r_k}^{r_k+t}\|\pr_{X(s)}h\|_{L_2}^2ds.
$$ Making $r_{k'}\to 0$, we obtain
$$
\langle(X(\cdot),h)\rangle_t=\int_0^t\|\pr_{X(s)}h\|_{L_2}^2ds.
$$
The theorem is proved.
\end{proof}

\begin{rem}
 Let $X(t)$, $t\in[0,T]$, be the process constructed in the proof of Theorem~\ref{theorem_existence} with $g\in L_{2+\eps}$ for some $\eps>0$. Then Fatou's lemma and Proposition~\ref{proposition_estimation_of_z} implies that for each $0\leq\delta<\frac{\eps}{2+\eps}$
$$
\E\sup_{s\in[0,t]}\|X(s)-g\|_{L_{2+\delta}}^{2+\delta}\leq C_{\delta,\eps,b}t^{1+\frac{\delta}{2}}\left(1+\|g\|_{L_{2+\eps}}\right), \quad t\in[0,T],
$$
and consequently,
$$
\E\sup_{t\in[0,T]}\|X(t)\|_{L_{2+\delta}}^{2+\delta}\leq C_{\delta,\eps,b}\left(1+\|g\|_{L_{2+\delta}}^{2+\delta}+\|g\|_{L_{2+\eps}}\right).
$$
\end{rem}

\section{A modification in $D((0,b),C[0,T])$}\label{modification_from_D}

\subsection{Coalescence in a finite number of points}

We will prove that any $L_2^{\uparrow}$-valued continuous square integrable martingale $X(t)$, $t\in[0,T]$, with the quadratic variation $(M')$ takes values from $\St$ and for all $t\in(0,T]$
$$
\E|\Pi_{X(t)}|<\infty.
$$

Let us prove an auxiliary lemma.

\begin{lem}\label{lemma_prop_of_pr}
 Let $\{e_n\}_{n\geq 1}$ be an orthonormal basis of $L_2[0,b]$ and $g\in L_2^{\uparrow}[0,b]$. Then
\begin{equation}\label{f_fin_of_sum}
\sum_{n=1}^{\infty}\|\pr_ge_n\|_{L_2}^2<\infty
\end{equation}
if and only if $g\in\St$. Moreover,
$$
\sum_{n=1}^{\infty}\|\pr_ge_n\|_{L_2}^2=|\Pi_g|.
$$
\end{lem}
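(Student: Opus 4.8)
The plan is to recognise $\pr_g$ as the orthogonal projection of $L_2[0,b]$ onto the closed subspace $H_g$ of $\sigma(g)$-measurable functions (equivalently, the range of the $L_2$-conditional expectation given $\sigma(g)$), and then to show that the left-hand side of~\eqref{f_fin_of_sum} equals $\dim H_g$, the Hilbert dimension of $H_g$, while the right-hand side $|\Pi_g|$ also equals $\dim H_g$. This splits the statement into one routine computation and one structural identity.

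First I would establish the basis-free identity $\sum_{n}\|\pr_g e_n\|_{L_2}^2=\dim H_g$. Writing $P=\pr_g$ and fixing an orthonormal basis $\{f_m\}$ of $H_g$, self-adjointness of $P$ and $Pf_m=f_m$ give $\|Pe_n\|_{L_2}^2=\sum_m|(Pe_n,f_m)|^2=\sum_m|(e_n,f_m)|^2$; summing over $n$, exchanging the two nonnegative sums by Tonelli, and applying Parseval's identity $\sum_n|(e_n,f_m)|^2=\|f_m\|_{L_2}^2=1$ yields $\sum_n\|Pe_n\|_{L_2}^2=\sum_m 1=\dim H_g$. In particular the sum is finite exactly when $H_g$ is finite-dimensional, and its value is precisely $\dim H_g$, so everything reduces to proving $\dim H_g=|\Pi_g|$.

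Next I would read off $\dim H_g$ from the structure of $\sigma(g)$. Decompose $[0,b]=F\sqcup S$, where $F=\bigcup\Pi_g$ is the union of the level sets of positive length and $S=[0,b]\setminus F$. Since each $\pi\in\Pi_g$ is the level set $g^{-1}(c_\pi)$ and the value set $C=\{c_\pi:\pi\in\Pi_g\}$ is countable, $F=g^{-1}(C)\in\sigma(g)$ and hence $S\in\sigma(g)$. On $F$ a $\sigma(g)$-measurable function is constant on each $\pi$, so the indicators $\{\I_\pi:\pi\in\Pi_g\}$ form an orthogonal family inside $H_g$. If $g\in\St$, so that $\Pi_g=\{\pi_1,\dots,\pi_n\}$ covers $[0,b]$ and $\leb(S)=0$, then the explicit formula~\eqref{f_projection} shows $H_g=\mathrm{span}\{\I_{\pi_1},\dots,\I_{\pi_n}\}$, an $n$-dimensional space, so $\dim H_g=n=|\Pi_g|$.

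It remains to treat $g\notin\St$, i.e. $|\Pi_g|=+\infty$, where I must show $\dim H_g=+\infty$; this is the main obstacle. If $\Pi_g$ is infinite, the orthogonal nonzero family $\{\I_\pi:\pi\in\Pi_g\}\subseteq H_g$ is already infinite and we are done. If instead $\leb(S)>0$, I would push Lebesgue measure forward by $g|_S$ to obtain $\nu=(g|_S)_{\#}(\leb|_S)$, a finite measure of positive mass on $\R$. This $\nu$ is non-atomic: an atom $\{c\}$ of $\nu$ would force $\leb(\{g=c\}\cap S)>0$, i.e. a positive-length level set lying inside $S$, contradicting the definition of $S$. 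A non-atomic measure of positive mass admits infinitely many disjoint Borel sets $B_j$ with $\nu(B_j)>0$, and then the sets $g^{-1}(B_j)\cap S\in\sigma(g)$ are disjoint of positive Lebesgue measure, so their indicators span an infinite-dimensional subspace of $H_g$. In either case $\dim H_g=+\infty=|\Pi_g|$, which together with the reduction above proves both assertions. The one delicate point is the non-atomicity of $\nu$ — equivalently, that no positive-length flat piece of $g$ is hidden inside $S$ — which is exactly where the definition of $\Pi_g$ enters.
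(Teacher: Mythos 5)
Your argument is correct and at its core coincides with the paper's: both reduce the left-hand side to $\mathrm{tr}(\pr_g)=\dim H_g$ (the paper via Parseval when $g\in\St$ and via the Hilbert--Schmidt property of $\pr_g$ for the converse) and then identify $\dim H_g$ with $|\Pi_g|$, your contrapositive with the non-atomic pushforward merely spelling out the step the paper compresses into ``it is easy to see \dots\ This implies that $g\in\St$.'' The only line worth adding is the exhaustiveness of your dichotomy for $g\notin\St$, namely that a finite $\Pi_g$ with $\bigcup\Pi_g\neq[0,b]$ forces $\leb(S)>0$: this holds because the elements of $\Pi_g$ are pairwise disjoint left-closed intervals, so a finite union of them either tiles $[0,b]$ or leaves a gap of positive length.
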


\begin{proof}
 We suppose that $g\in\St$ and prove~\eqref{f_fin_of_sum}. Let $\Pi_g=\{\pi_k,\ k\in[q]\}$. Then
$$
\pr_ge_n=\sum_{k=1}^q\frac{1}{|\pi_k|}\int_{\pi_k}e_n(u)du\I_{\pi_k}
$$
and consequently,
$$
\|\pr_ge_n\|_{L_2}^2=\sum_{k=1}^q\frac{1}{|\pi_k|}(e_n,\I_{\pi_k})^2.
$$
Hence,
\begin{align*}
\sum_{n=1}^{\infty}\|\pr_ge_n\|_{L_2}^2=\sum_{n=1}^{\infty}\sum_{k=1}^q\frac{1}{|\pi_k|}(e_n,\I_{\pi_k})^2=\sum_{k=1}^q\frac{1}{|\pi_k|}\|\I_{\pi_k}\|_{L_2}^2=q=|\Pi_g|<\infty.
\end{align*}

Next, suppose that~\eqref{f_fin_of_sum} holds. Then $\pr_g$ is a Hilbert-Schmidt operator. Since $\pr_g$ is a projection on the subspace of $\sigma(g)$-measurable functions $H_g\subset L_2$, it is easy to see that $H_g$ is a finite dimensional Hilbert space. Therefore $\sigma(g)$ is generated by a finite number of sets. This implies that $g\in\St$. The lemma is proved.
\end{proof}

\begin{prp}\label{prp_values_in_St}
Let $X(t)$, $t\in[0,T]$, be an $L_2^{\uparrow}[0,b]$-valued continuous square integrable martingale with the quadratic variation  $(M')$ which starts from $g\in L_{2+\eps}^{\uparrow}[0,b]$. Then
$$
\p\left\{\mbox{for all}\ 0<s\leq t\leq T,\ \ X(s)\in\St\ \mbox{and}\ \Pi_{X(t)}\leq \Pi_{X(s)}\right\}=1
$$
and
$$
\E\int_0^t|\Pi_{X(s)}|ds=\E\|X(t)-g\|_{L_2}^2<\infty.
$$
\end{prp}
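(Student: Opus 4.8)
The plan is to prove the integral identity first and then bootstrap from it to the pathwise step-function property. Fix $t\in(0,T]$ and an orthonormal basis $\{e_n\}_{n\geq 1}$ of $L_2[0,b]$. Since $X$ is a square integrable martingale with quadratic variation $(M')$, for each $n$ the real-valued process $(X(\cdot),e_n)$ is a continuous square integrable martingale started at $(g,e_n)$ with $\langle(X(\cdot),e_n)\rangle_t=\int_0^t\|\pr_{X(s)}e_n\|_{L_2}^2\,ds$ by property (M). Hence $\E\big((X(t),e_n)-(g,e_n)\big)^2=\E\int_0^t\|\pr_{X(s)}e_n\|_{L_2}^2\,ds$. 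Summing over $n$ and using Parseval's identity together with Tonelli's theorem (all summands are nonnegative), I obtain
$$
\E\|X(t)-g\|_{L_2}^2=\sum_{n=1}^\infty\E\int_0^t\|\pr_{X(s)}e_n\|_{L_2}^2\,ds=\E\int_0^t\sum_{n=1}^\infty\|\pr_{X(s)}e_n\|_{L_2}^2\,ds.
$$
By Lemma~\ref{lemma_prop_of_pr} the inner sum equals $|\Pi_{X(s)}|$ (with the convention that it is $+\infty$ when $X(s)\notin\St$, which is consistent with the definition of $|\Pi_{\cdot}|$), and this yields the asserted identity $\E\int_0^t|\Pi_{X(s)}|\,ds=\E\|X(t)-g\|_{L_2}^2$. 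The right-hand side is finite because $\E\|X(t)-g\|_{L_2}^2=\E\|X(t)\|_{L_2}^2-\|g\|_{L_2}^2\leq\E\|X(t)\|_{L_2}^2<\infty$, the last bound being part of the definition of a square integrable martingale and the middle equality following from $\E X(t)=g$.

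From the finiteness of $\E\int_0^t|\Pi_{X(s)}|\,ds$ and Tonelli I get $\int_0^t\E|\Pi_{X(s)}|\,ds<\infty$, so that $\E|\Pi_{X(s)}|<\infty$ for Lebesgue-almost every $s\in(0,t]$; for each such $s$ one has $|\Pi_{X(s)}|<\infty$, i.e. $X(s)\in\St$, almost surely. The remaining and main point is to upgrade this "almost every $s$" statement to "every $s$" on a single full-measure event. Here I would invoke Proposition~\ref{prp_coalescing} together with Remark~\ref{rem_size_of_Pi}: on a full-measure event $\Omega_0$ one has $\Pi_{X(t')}\leq\Pi_{X(s')}$, and therefore $|\Pi_{X(t')}|\leq|\Pi_{X(s')}|$, for all $s'\leq t'$ simultaneously, so that $s\mapsto|\Pi_{X(s)}|$ is non-increasing along every trajectory in $\Omega_0$.

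To finish, I would fix a countable dense set $D\subset(0,t]$ of \emph{good} times (those $s$ with $|\Pi_{X(s)}|<\infty$ a.s.), which exists since almost every $s$ is good, and intersect $\Omega_0$ with the countably many full-measure events $\{|\Pi_{X(s)}|<\infty\}$, $s\in D$, to obtain a full-measure event $\Omega_1$. On $\Omega_1$, given any $s_0\in(0,t]$, density provides $s\in D$ with $s<s_0$, and monotonicity gives $|\Pi_{X(s_0)}|\leq|\Pi_{X(s)}|<\infty$; hence $X(s_0)\in\St$. Combined with the inequality $\Pi_{X(t_0)}\leq\Pi_{X(s_0)}$ for $s_0\leq t_0$ already guaranteed on $\Omega_0$, this establishes the first displayed assertion for all $0<s\leq t\leq T$. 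The step I expect to be most delicate is precisely this last bootstrap: the integral identity only controls $|\Pi_{X(s)}|$ for almost every $s$, and it is the pathwise monotonicity coming from the coalescing property that makes the exceptional null set of times disappear.
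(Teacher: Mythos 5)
Your proof is correct and follows essentially the same route as the paper: the identity $\E\int_0^t|\Pi_{X(s)}|\,ds=\E\|X(t)-g\|_{L_2}^2$ via Parseval's identity, Tonelli and Lemma~\ref{lemma_prop_of_pr}, followed by a Fubini argument combined with the pathwise monotonicity of $\Pi_{X(\cdot)}$ from Proposition~\ref{prp_coalescing} and Remark~\ref{rem_size_of_Pi} to remove the exceptional null set of times. The only cosmetic difference is that the paper applies Fubini to the joint set $\left\{(s,\omega):\ \sum_{n}\|\pr_{X(s,\omega)}e_n\|_{L_2}^2<\infty\right\}$ to obtain, for almost every $\omega$, a dense set of good times along that trajectory, whereas you fix a deterministic countable dense set of good times and intersect the corresponding full-measure events --- the two bootstraps are equivalent.
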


\begin{proof}
Let $\{e_n\}_{n\geq 1}$ be an orthonormal basis of $L_2[0,b]$. Let us evaluate
\begin{align}\label{f_relation_EX2_Pi}
\begin{split}
\E\|X(t)-g\|_{L_2}^2&=\E\sum_{n=1}^{\infty}(X(t)-g,e_n)^2=\sum_{n=1}^{\infty}\E\langle(X(\cdot),e_n)\rangle_t\\
&=\sum_{n=1}^{\infty}\E\int_0^t\|\pr_{X(s)}e_n\|_{L_2}^2ds\\
&=\E\int_0^t\sum_{n=1}^{\infty}\|\pr_{X(s)}e_n\|_{L_2}^2ds=\E\int_0^t|\Pi_{X(s)}|ds<\infty.
\end{split}
\end{align}
We set
$$
\Omega'=\left\{\mbox{for all } 0<s\leq t\leq T,\ \Pi_{X(t)}\leq\Pi_{X(s)} \right\}
$$
and
$$
A=\left\{(s,\omega)\in(0,T]\times\Omega':\ \sum_{n=1}^{\infty}\|\pr_{X(s,\omega)}e_n\|_{L_2}^2<\infty\right\}.
$$
Then $\E\int_0^T\I_{A^c}(s)ds=0$, by Proposition~\ref{prp_coalescing} and~\eqref{f_relation_EX2_Pi}. Thus, there exists $\Omega''\subset\Omega'$ such that $\p\{\Omega''\}=1$ and $\leb([0,T]\setminus A_{\omega})=0$ for all $\omega\in\Omega''$, where $A_{\omega}=\{s\in(0,T]:\ (s,\omega)\in A\}$. It implies that $A_{\omega}$ is dense in $(0,T]$. Next, by Lemma~\ref{lemma_prop_of_pr}, we have that for each $\omega\in\Omega'$, $X(s,\omega)\in\St$, $s\in A_{\omega}$. Since for all $s\leq t$, $\omega\in\Omega''$, $\Pi_{X(t,\omega)}\leq \Pi_{X(s,\omega)}$ and $A_{\omega}$ is dense in $(0,T]$, we have that $X(s,\omega)\in\St$ for all $s\in (0,T]$ and $\omega\in\Omega''$. The proposition is proved.
\end{proof}

\begin{cor}
 For each $t\in(0,T]$, $\E|\Pi_{X(t)}|<\infty.$
\end{cor}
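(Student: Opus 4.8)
The plan is to deduce this immediately from Proposition~\ref{prp_values_in_St} together with the almost sure monotonicity of the partition size already recorded there. The proposition gives two ingredients: first, the integral identity $\E\int_0^t|\Pi_{X(s)}|ds=\E\|X(t)-g\|_{L_2}^2<\infty$; and second, that on an event $\Omega''$ of full probability one has $\Pi_{X(t)}\leq\Pi_{X(s)}$ for all $0<s\leq t\leq T$. By Remark~\ref{rem_size_of_Pi}, the partial order inequality $\Pi_{X(t)}\leq\Pi_{X(s)}$ forces $|\Pi_{X(t)}|\leq|\Pi_{X(s)}|$, so on $\Omega''$ the map $s\mapsto|\Pi_{X(s)}|$ is non-increasing on $(0,T]$.

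The key step is then a one-line pathwise estimate. Fix $t\in(0,T]$ and work on $\Omega''$. For every $s\in(0,t]$ we have $|\Pi_{X(s)}|\geq|\Pi_{X(t)}|$, and integrating this inequality over $s\in(0,t)$ yields
$$
\int_0^t|\Pi_{X(s)}|\,ds\geq\int_0^t|\Pi_{X(t)}|\,ds=t\,|\Pi_{X(t)}|.
$$
Taking expectations and using the finiteness supplied by the proposition gives $t\,\E|\Pi_{X(t)}|\leq\E\int_0^t|\Pi_{X(s)}|\,ds<\infty$. Since $t>0$, dividing by $t$ yields $\E|\Pi_{X(t)}|<\infty$, as claimed.

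There is essentially no serious obstacle here: the corollary is a direct consequence of combining the two conclusions of the preceding proposition, and the only point requiring a moment's care is to invoke the monotonicity of $s\mapsto|\Pi_{X(s)}|$ (rather than of $\Pi_{X(s)}$ itself) via Remark~\ref{rem_size_of_Pi}, so that the lower bound $t\,|\Pi_{X(t)}|$ for the integral is valid almost surely. Everything else is a routine application of monotonicity of the integral and the finiteness of $\E\|X(t)-g\|_{L_2}^2$.
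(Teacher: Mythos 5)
Your proof is correct and follows essentially the same route as the paper: both rest on the a.s.\ monotonicity of $s\mapsto|\Pi_{X(s)}|$ (via Remark~\ref{rem_size_of_Pi}) combined with the finiteness of $\E\int_0^t|\Pi_{X(s)}|\,ds$ from Proposition~\ref{prp_values_in_St}. The only cosmetic difference is that you bound the integral pathwise by $t\,|\Pi_{X(t)}|$ before taking expectations, while the paper first passes to the monotone function $t\mapsto\E|\Pi_{X(t)}|$ and then uses the finiteness of its integral; the two arguments are interchangeable.
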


\begin{proof}
 By Proposition~\ref{prp_coalescing}, $\Pi_{X(t)}\leq \Pi_{X(s)}$ a.s. for all $s\leq t$. Thus, by Remark~\ref{rem_size_of_Pi}, $\E|\Pi_{X(t)}|\leq \E|\Pi_{X(s)}|$ for all $s\leq t$. So, the function $\E|\Pi_{X(t)}|$, $t\in(0,T]$, increases. Since the integral $\int_0^T\E|\Pi_{X(t)}|dt$ is finite, $\E|\Pi_{X(t)}|<\infty$ for all $t\in(0,T]$.
\end{proof}

\subsection{The martingale $X$ satisfies $(C1)-(C4)$ (proof of Theorem~\ref{theorem_modification})}

The aim of this section is to prove the existence of a process from $D((0,b),C[0,T])$ that satisfies $(C1)-(C4)$. We are going to show more, namely we prove that any $L_2^{\uparrow}[0,b]$-valued continuous square integrable martingale with the quadratic variation $(M')$ which starts from $g\in L_{2+\eps}^{\uparrow}[0,b]$ has a needed modification. It will prove Theorem~\ref{theorem_modification}.

So, let $X(t)$, $t\in[0,T]$, be an $L_2^{\uparrow}[0,b]$-valued continuous square integrable martingale with the quadratic variation $(M')$ which starts from $g\in L_{2+\eps}^{\uparrow}[0,b]$. Since the martingale $X(t)$, $t\in[0,T]$, takes values from $\St$ (see Proposition~\ref{prp_values_in_St}), Proposition~\ref{prp_modif_from_D_for_mart} implies that $X$ has a modification from $D((0,b),C[0,T])$. We will denote the modification of $X$ by the same letter $X$.

\begin{thm}\label{thm_conditions_C}
  The process $X(u,t),\ u\in(0,b),\ t\in[0,T]$, satisfies $(C1)-(C4)$.
\end{thm}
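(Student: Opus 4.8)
The plan is to verify each of the four conditions $(C1)$--$(C4)$ for the modification $X \in D((0,b),C[0,T])$ obtained from Proposition~\ref{prp_modif_from_D_for_mart}, translating the abstract $L_2^{\uparrow}$-martingale structure into the pointwise statements about individual particles. The three conditions $(C1)$, $(C2)$, $(C3)$ should follow almost immediately from the machinery already assembled. Indeed, Proposition~\ref{prp_values_in_St} guarantees $X(t) \in \St$ a.s.\ for every $t \in (0,T]$, so Proposition~\ref{prp_modif_from_D_for_mart} applies and its conclusion directly gives that each $X(u,\cdot)$ is a continuous square integrable martingale with respect to the filtration generated by the process; this is $(C1)$. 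Condition $(C2)$, namely $X(u,0) = g(u)$, holds because $X$ starts from $g$ in $L_2$ and the c\`adl\`ag modification agrees with the non-decreasing representative of $g$ at all continuity points. Condition $(C3)$, monotonicity in $u$, is inherited from the fact that $X(t) \in L_2^{\uparrow}$ for every $t$, i.e.\ each $X(\cdot,t)$ has a non-decreasing representative, so $X(u,t) \le X(v,t)$ for $u < v$.

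The substantive work is condition $(C4)$, the computation of the joint quadratic variation
\[
\langle X(u,\cdot), X(v,\cdot)\rangle_t = \int_0^t \frac{\I_{\{\tau_{u,v} \le s\}}\,ds}{m(u,s)}.
\]
The natural approach is to fix $u, v \in (0,b)$ and realize $X(u,\cdot)$ and $X(v,\cdot)$ as inner products $(X(\cdot), h)$ against suitable test functions $h \in L_2$, then exploit the known quadratic variation $\langle X\rangle_t = \int_0^t \pr_{X(s)}\,ds$. Concretely, I would approximate the point evaluation at $u$ by normalized indicators $h^u_\delta = \frac{1}{\delta}\I_{[u, u+\delta]}$, so that $(X(t), h^u_\delta) \to X(u,t)$ as $\delta \to 0$ by right-continuity of the trajectories in $D((0,b),C[0,T])$. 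For the joint quadratic variation of the approximants one has
\[
\langle (X(\cdot),h^u_\delta), (X(\cdot),h^v_\delta)\rangle_t = \int_0^t (\pr_{X(s)} h^u_\delta,\, h^v_\delta)\,ds,
\]
and the key point is to identify the limit of the integrand. Since $X(s) \in \St$ the projection $\pr_{X(s)}$ averages over the level intervals $\Pi_{X(s)}$; when $u$ and $v$ lie in the same level interval at time $s$ (equivalently $X(u,s) = X(v,s)$, i.e.\ $\tau_{u,v} \le s$) the integrand converges to $1/m(u,s)$, the reciprocal of the length of that interval, and otherwise it converges to $0$.

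The main obstacle will be justifying the passage to the limit $\delta \to 0$ uniformly enough to conclude convergence of the quadratic variations, not merely of the martingales themselves. I would control this using the coalescence property~\eqref{f_coalescing} together with the identity $m(u,s) = \leb\{w : \exists r \le s,\ X(u,r) = X(w,r)\}$, showing that for small $\delta$ the mass of the level interval containing $u$ is well approximated and that the indicator $\I_{\{\tau_{u,v}\le s\}}$ is recovered in the limit. A convenient rigorous route is to observe that the approximating martingales $(X(\cdot),h^u_\delta)$ converge to $X(u,\cdot)$ in an $L_2$ sense strong enough (via the uniform second-moment bounds of Corollary~\ref{corollary_estimation_of_z} and the Burkholder--Davis--Gundy inequality) that the bracket processes converge, for instance invoking a stability result for quadratic variations under $\mathrm{ucp}$ convergence of martingales as in Lemma~B.11~\cite{Engelbert:2005}, exactly as was done in the proof of Lemma~\ref{lemma_property_of_linit_point_for_St}. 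Combining the limit of the integrand with dominated convergence (the integrands are bounded by $1/\delta^2$-free estimates once one passes to the finite-dimensional picture $X(s) = \sum_k x_k(s)\I_{\pi_k}$) then yields $(C4)$ and completes the proof of Theorem~\ref{thm_conditions_C}, and hence of Theorem~\ref{theorem_modification}.
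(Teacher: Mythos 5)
Your proposal follows essentially the same route as the paper: $(C1)$--$(C3)$ are read off from Proposition~\ref{prp_modif_from_D_for_mart} (applicable by Proposition~\ref{prp_values_in_St}), and $(C4)$ is obtained by approximating the point evaluations with the normalized indicators $h^u_\eps$, passing the brackets to the limit via Lemma~B.11 of~\cite{Engelbert:2005}, and identifying the integrand through the step-function structure of $X(s)$ and formula~\eqref{f_projection}. The one place the paper is more careful than your sketch is the identification of the limiting bracket: instead of a dominated-convergence argument near $s=0$ (where a dominating function is not obvious since $1/m(u,s)$ may blow up), it observes that on $[\delta,T]$ all partitions are coarser than $\Pi_{X(\delta)}$, so the integrand equals $\I_{\{\tau_{u,v}\le s\}}/m(u,s)$ \emph{exactly} for all sufficiently small $\eps$, and then sends $\delta\to 0$ using continuity of the bracket.
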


\begin{proof}
 Properties $(C1)-(C3)$ immediately follow from Proposition~\ref{prp_modif_from_D_for_mart}. Let us prove $(C4)$. We fix $u,v\in(0,b)$ and denote for $\eps>0$
 \begin{align*}
  h_{\eps}^u&=\frac{1}{\eps}\I_{[u,u+\eps]},\\
  h_{\eps}^v&=\frac{1}{\eps}\I_{[v,v+\eps]}.
 \end{align*}

 First we prove that for each $\lambda>0$ and $w=u,v$
 \begin{equation}\label{f_Q_convergence}
 \p\left\{\sup_{t\in[0,T]}|(X(t),h_{\eps}^w)-X(w,t)|>\lambda\right\}\to 0\quad\mbox{as}\ \eps\to 0.
 \end{equation}
 By Proposition~2.2.16~\cite{Ethier:1986},
 \begin{align*}
  \p\left\{\sup_{t\in[0,T]}|(X(t),h_{\eps}^w)-X(w,t)|>\lambda\right\}\leq\frac{1}{\lambda^2}\E((X(T),h_{\eps}^w)-X(w,T))^2.
 \end{align*}
 Since $X(T)\in D^{\uparrow}$, we have $(X(T),h_{\eps}^w)\to X(w,T)$ a.s. as $\eps\to 0$. Moreover,
 $$
 \p\left\{0\leq(X(T),h_{\eps}^w)-X(w,T)\leq X(w+\delta,T)-X(w,T)\ \mbox{for all}\ \eps<\delta\right\}=1,
 $$
 where $\delta$ is chosen such that $w+\delta\in(0,b)$. Since $\E(X(w+\delta,T)-X(w,T))^2<\infty$, the dominated convergence theorem implies
 $$
 \E((X(T),h_{\eps}^w)-X(w,T))^2\to 0\quad\mbox{as}\ \ \eps\to 0.
 $$
 It yields~\eqref{f_Q_convergence}. By Lemma~B.11~\cite{Engelbert:2005} and the polarization formula for joint quadratic variation of martingales,
 $$
 \sup_{t\in[0,T]}|\langle(X(\cdot),h_{\eps}^u),(X(\cdot),h_{\eps}^v)\rangle_t-\langle X(u,\cdot),X(v,\cdot)\rangle_t|\to 0\quad\mbox{in probability as}\ \ \eps\to 0.
 $$
 In particular, for all $t\in[0,T]$
 \begin{equation}\label{f_chat_lim}
 \langle(X(\cdot),h_{\eps}^u),(X(\cdot),h_{\eps}^v)\rangle_t\to\langle X(u,\cdot),X(v,\cdot)\rangle_t\quad\mbox{in probability as}\ \ \eps\to 0.
 \end{equation}

 Let $f\in\St$. Choose $\eps_0>0$ such that $[u,u+\eps_0]\in\pi$ and $[v,v+\eps_0]\in\pi'$ for $\pi,\pi'\in\Pi_f$. Then by~\eqref{f_projection}, for all $\eps\in(0,\eps_0]$
 \begin{equation}\label{f_lim_pr}
 (\pr_fh_{\eps}^u,\pr_fh_{\eps}^v)=\begin{cases}
                                    0,& \pi\neq\pi',\\
                                    \frac{1}{|\pi|},& \pi=\pi',
                                   \end{cases}
 =\frac{1}{|\pi|}\I_{\{f(u)=f(v)\}}.
 \end{equation}

 We next set
 \begin{align*}
 \Omega'&=\{X\in D((0,b),C[0,T])\}\\
 &\cap\left\{\mbox{for all}\ 0<s\leq t\leq T,\ \ X(s)\in\St\ \mbox{and}\ \Pi_{X(t)}\leq \Pi_{X(s)}\right\}.
 \end{align*}
 By propositions~\ref{prp_modif_from_D_for_mart} and~\ref{prp_values_in_St}, $\p\{\Omega'\}=1$.

 Let $\omega\in\Omega'$ and $\delta>0$. Since $\Pi_{X(s,\omega)}\leq \Pi_{X(\delta,\omega)}$ for all $s\in[\delta,T]$,~\eqref{f_lim_pr} implies that there exists $\eps_0(\omega)>0$ such that for each $\eps\in(0,\eps_0(\omega)]$ and $s\in[\delta,T]$
 $$
 (\pr_{X(s,\omega)}h_{\eps}^u,\pr_{X(s,\omega)}h_{\eps}^v)=\frac{1}{m(u,s,\omega)}\I_{\{X(u,s,\omega)=X(v,s,\omega)\}}=\frac{\I_{\{\tau_{u,v}(\omega)\leq s\}}}{m(u,s,\omega)}.
 $$
 Consequently, for each $t\in(\delta,T]$
 \begin{align*}
  \langle(X(\cdot),h_{\eps}^u),(X(\cdot),h_{\eps}^v)\rangle_t(\omega)&-\langle(X(\cdot),h_{\eps}^u),(X(\cdot),h_{\eps}^v)\rangle_{\delta}(\omega)\\
  &=\int_{\delta}^t\frac{\I_{\{\tau_{u,v}(\omega)\leq s\}}}{m(u,s,\omega)}ds\quad \mbox{for all}\ \ \eps\in(0,\eps_0].
 \end{align*}
 Hence, by~\eqref{f_chat_lim},
 $$
  \langle X(u,\cdot),X(v,\cdot)\rangle_t(\omega)-\langle X(u,\cdot),X(v,\cdot)\rangle_{\delta}(\omega)
  =\int_{\delta}^t\frac{\I_{\{\tau_{u,v}(\omega)\leq s\}}}{m(u,s,\omega)}ds.
 $$
 Making $\delta\to 0$ and using the continuity of $\langle X(u,\cdot),X(v,\cdot)\rangle_t(\omega)$, $t\in[0,T]$, we obtain
 $$
  \langle X(u,\cdot),X(v,\cdot)\rangle_t(\omega)=\int_0^t\frac{\I_{\{\tau_{u,v}(\omega)\leq s\}}}{m(u,s,\omega)}ds,\quad t\in[0,T].
 $$
 It finishes the proof of the theorem.
 \end{proof}

 \begin{rem}\label{remark_conditions_C_implies_mart_prop_in_L}
  Note that if a $C[0,T]$-valued process $Y(u,\cdot)$, $u\in(0,b)$ with trajectories from $D((0,b),C[0,T])$ satisfies $(C1)-(C4)$, then Proposition~\ref{proposition_estimation_of_m} and the similar calculation as in the proof of Theorem~3.1~\cite{Konarovskyi_LDP:2015} give that $Y(\cdot,t)$, $t\in[0,T]$, is an $L_2^{\uparrow}[0,1]$-valued continuous square integrable martingale with the quadratic variation $(M')$. Thus, the conditions $(C1)-(C4)$ are equivalent $(M)$ or $(M')$.
 \end{rem}

\section{Estimations of the expectation of mass and diffusion rate}\label{mass_estimations}

Throughout this and the next sections we will suppose that $\{X(u,t),\ u\in(0,1),\ t\in[0,T]\}$ belongs to $D((0,1),C[0,T])$, satisfies $(C1)-(C4)$ and $X(\cdot,0)=g\in L_{2+\eps}^{\uparrow}[0,1]$ for some $\eps>0$.

\subsection{Estimation of the expectation of diffusion rate}

\begin{prp}\label{prp_estimation_of_diff}
 Under the assumptions of Theorem~\ref{prp_LIL_2}, there exists $C'=C'_{u_0,\alpha,g}>0$ such that
 $$
 \E\frac{1}{m(u_0,t)}\leq \frac{C'}{t^{\frac{1}{2\alpha+1}}},\quad t\in(0,T].
 $$
\end{prp}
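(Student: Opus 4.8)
The plan is to mimic the proof of Proposition~\ref{proposition_estimation_of_m}: feed the tail estimate of Lemma~\ref{lemma_estimation_of_p_m} into a layer-cake representation of the expectation. First I would write, using $\E Y=\int_0^\infty\p\{Y>s\}\,ds$ for the nonnegative variable $Y=1/m(u_0,t)$ and the substitution $s=1/r$,
$$
\E\frac{1}{m(u_0,t)}=\int_0^\infty\p\{m(u_0,t)<r\}\,\frac{dr}{r^2}.
$$
This identity holds regardless of finiteness, so bounding the right-hand side both proves integrability and delivers the estimate at once.

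Next I would insert the hypothesis of Theorem~\ref{prp_LIL_2} through the lemma. In the sub-case $g(u_0+u)-g(u_0)\le Cu^\alpha$ I apply Lemma~\ref{lemma_estimation_of_p_m} at $u=u_0$ (recall $b=1$ here), and in the sub-case $g(u_0)-g(u_0-u)\le Cu^\alpha$ I apply Remark~\ref{rem_about_G} instead; the one-sidedness of the assumption is thus absorbed by choosing the correct direction in the lemma. In either case, setting $\rho:=\delta\wedge(1-u_0)$ (resp. $\rho:=\delta\wedge u_0$), the Hölder bound gives $G(u_0,r)\le Cr^\alpha$ and hence
$$
\p\{m(u_0,t)<r\}\le\frac{2C}{\sqrt{2\pi t}}\,r^{\alpha+\frac12}=:K_t\,r^{\alpha+\frac12},\qquad 0<r<\rho.
$$

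Finally I would split the integral at $\rho$. On $(\rho,\infty)$ the trivial bound $\p\{m(u_0,t)<r\}\le1$ contributes at most $\int_\rho^\infty r^{-2}\,dr=\rho^{-1}$, a constant. On $(0,\rho)$ I combine both estimates as $\p\{m(u_0,t)<r\}\le\min(1,K_t r^{\alpha+1/2})$ and, since the integrand is nonnegative, enlarge the domain to $(0,\infty)$; splitting at the crossover radius $r_*=K_t^{-2/(2\alpha+1)}$ gives
$$
\int_0^\infty\min\!\left(1,K_t r^{\alpha+\frac12}\right)\frac{dr}{r^2}=\int_0^{r_*}K_t\,r^{\alpha-\frac32}\,dr+\int_{r_*}^\infty r^{-2}\,dr=\left(\tfrac{1}{\alpha-\frac12}+1\right)K_t^{\frac{2}{2\alpha+1}}.
$$
Since $K_t^{2/(2\alpha+1)}$ is a fixed multiple of $t^{-1/(2\alpha+1)}$ and $\rho^{-1}\le\rho^{-1}T^{1/(2\alpha+1)}\,t^{-1/(2\alpha+1)}$ for $t\le T$, collecting the two contributions yields the asserted bound with $C'$ depending only on $u_0,\alpha,g$. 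The only genuinely delicate point is the convergence of $\int_0 r^{\alpha-3/2}\,dr$ at the origin, which holds precisely because $\alpha>\frac12$; this is exactly where that hypothesis is consumed, and the crossover balance $K_t r_*^{\alpha+1/2}=1$ is what produces the exponent $\frac{1}{2\alpha+1}$.
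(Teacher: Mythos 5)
Your argument is correct and rests on the same two pillars as the paper's proof: the layer-cake representation of $\E\frac{1}{m(u_0,t)}$ and the tail estimate of Lemma~\ref{lemma_estimation_of_p_m} (resp.\ Remark~\ref{rem_about_G} in the left-sided sub-case) fed with the one-sided H\"older hypothesis of Theorem~\ref{prp_LIL_2}. Where you diverge is in how the resulting integral is evaluated. The paper keeps the sharper Gaussian form of the lemma, $\p\{m(u_0,t)<1/r\}\leq\frac{2}{\sqrt{2\pi}}\int_0^{x_t(r)}e^{-x^2/2}\,dx$ with $x_t(r)=(g(u_0+1/r)-g(u_0))/\sqrt{rt}$, interchanges the order of integration, and bounds the inverse function $r_t(x)$ by $C_2x^{-2/(2\alpha+1)}t^{-1/(2\alpha+1)}$, so that everything reduces to the finiteness of $\int_0^{\infty}e^{-x^2/2}x^{-2/(2\alpha+1)}\,dx$. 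You instead linearize the Gaussian integral to $K_tr^{\alpha+1/2}$, cap it at $1$, and optimize the crossover radius $r_*=K_t^{-2/(2\alpha+1)}$ by hand. The two computations are equivalent up to constants --- since $\int_0^ye^{-x^2/2}\,dx\leq\min\bigl(y,\sqrt{\pi/2}\bigr)$, the paper's Fubini step is essentially a smoother way of performing your $\min$-splitting --- and both consume the hypothesis $\alpha>\frac{1}{2}$ at the same point (integrability of $r^{\alpha-3/2}$ at the origin for you, of $x^{-2/(2\alpha+1)}e^{-x^2/2}$ at the origin for the paper), producing the exponent $\frac{1}{2\alpha+1}$ from the same scaling balance. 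Your version is the more elementary of the two, avoiding the inverse-function bookkeeping; the one detail that must be (and is) handled is that the power bound on the tail probability is only valid for $r<\rho=\delta\wedge(1-u_0)$ (resp.\ $\delta\wedge u_0$), which you absorb correctly by paying the constant $\rho^{-1}$ on the complementary range and then dominating it by $\rho^{-1}T^{1/(2\alpha+1)}t^{-1/(2\alpha+1)}$.
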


\begin{proof}
 To prove the proposition, we will use the estimation of $\p\{m(u,t)<r\}$ (see Lemma~\ref{lemma_estimation_of_p_m}). Assume that $t\in(0,T]$ is fixed and $g(u_0+u)-g(u_0)\leq Cu^{\alpha}$, $u\in[0,\delta]$, for some $C>0$ and $\delta>0$. For the case  $g(u_0)-g(u_0-u)\leq Cu^{\alpha}$ the proof is similar.  We estimate
 \begin{align*}
  \E\frac{1}{m(u_0,t)}&=\int_0^{+\infty}\p\left\{\frac{1}{m(u_0,t)}>r\right\}dr=\int_0^{+\infty}\p\left\{m(u_0,t)<\frac{1}{r}\right\}dr\\
  &\leq C_1+\frac{2}{\sqrt{2\pi}}\int_{\frac{1}{\delta}}^{+\infty}dr\int_0^{\frac{g(u_0+1/r)-g(u_0)}{\sqrt{rt}}}e^{-\frac{x^2}{2}}dx=I.
 \end{align*}
 Set
 $$
 x_t(r)=\frac{g(u_0+1/r)-g(u_0)}{\sqrt{rt}},\quad r\in[1/\delta,\infty),
 $$
 and note that $x_t$ strictly decreases to zero for each $t$. Consequently, there exists the inverse map
 $$
 r_t(x)=\max\{r:\ x_t(r)\geq x\},\quad x\in\left[0,c/\sqrt{t}\right],
 $$
 where $c=(g(u_0+\delta)-g(u_0))\sqrt{\delta}$.
 Hence, interchanging of integrations, we obtain
 \begin{align*}
 I=&C_1+\frac{2}{\sqrt{2\pi}}\int_0^{\frac{c}{\sqrt{t}}}e^{-\frac{x^2}{2}}\left(\int_{\frac{1}{\delta}}^{r_t(x)}dr\right)dx\\
 &=C_1+\frac{2}{\sqrt{2\pi}}\int_0^{\frac{c}{\sqrt{t}}}e^{-\frac{x^2}{2}}\left(r_t(x)-\frac{1}{\delta}\right)dx.
 \end{align*}

 Next, for fixed $x'\in\left[0,c/\sqrt{t}\right]$ we denote
 $$
 r'=r_t(x')=\max\left\{r:\ \frac{g(u_0+1/r)-g(u_0)}{\sqrt{r}}\geq x'\sqrt{t}\right\}.
 $$
 By assumption of the proposition, for all $r\geq\frac{1}{\delta}$
 \begin{equation}\label{f_estim_of_r}
 \frac{g(u_0+1/r)-g(u_0)}{\sqrt{r}}\leq\frac{C}{r^{\frac{2\alpha+1}{2}}}.
 \end{equation}
 Thus, using the inequality $\frac{g(u_0+1/{r'})-g(u_0)}{\sqrt{r'}}\geq x'\sqrt{t}$ and~\eqref{f_estim_of_r}, we have
 $$
 r'\leq\frac{C_2}{(x')^{\frac{2}{2\alpha+1}}t^{\frac{1}{2\alpha+1}}},
 $$
 where $C_2=C^{\frac{2}{2\alpha+1}}$. So,
 $$
 r_t(x)\leq\frac{C_2}{x^{\frac{2}{2\alpha+1}}t^{\frac{1}{2\alpha+1}}},\quad x\in\left(0,c/\sqrt{t}\right].
 $$
 Let us come back to the estimation of $\E\frac{1}{m(u_0,t)}$. So,
 \begin{align*}
  \E\frac{1}{m(u_0,t)}&\leq C_1+\frac{2}{\sqrt{2\pi}}\int_0^{\frac{c}{\sqrt{t}}}e^{-\frac{x^2}{2}}\left(\frac{C_2}{x^{\frac{2}{2\alpha+1}}t^{\frac{1}{2\alpha+1}}}-\frac{1}{\delta}\right)dx\\
  &\leq t^{-\frac{1}{2\alpha+1}}\left\{C_1t^{\frac{1}{2\alpha+1}}+\frac{2}{\sqrt{2\pi}}\int_0^{+\infty}e^{-\frac{x^2}{2}}\left(\frac{C_2}{x^{\frac{2}{2\alpha+1}}}-\frac{t^{\frac{1}{2\alpha+1}}}{\delta}\right)dx\right\}\\
  &\leq C_3t^{-\frac{1}{2\alpha+1}},\quad t\in(0,T],
 \end{align*}
 since the integral in the brackets $\{\cdot\}$ is finite for $\alpha>\frac{1}{2}$. The proposition is proved.
\end{proof}

\subsection{Rescaling property of $X$}

In this subsection we prove that conditions $(C1)-(C4)$ is invariant with respect to the transformation $(u,t)\to(\rho u,\rho^{\gamma}t)$. So, let $\rho>0$, $q\in\R$ and $\alpha>0$ be fixed. Set
\begin{equation}\label{f_X_rho}
X_{\rho}(u,t)=\frac{1}{\rho^{\alpha}}X(u\rho-q,t\rho^{\gamma}),\quad u\in(q/\rho,(q+1)/\rho),\ \ t\in[0,T/\rho^{\gamma}],
\end{equation}
where $\gamma=2\alpha+1$.
\begin{lem}\label{lem_rescaling_property}
 Let $a=\frac{q}{\rho}$, $b=\frac{q+1}{\rho}$. The process $X_{\rho}$ belongs to $D((a,b),C[0,T/\rho^{\gamma}])$ and satisfies $(C1)-(C4)$ with the function $\frac{1}{\rho^{\alpha}}g(\cdot\rho-q)$ instead of $g$ in condition~$(C2)$. Moreover,
 \begin{align}\label{f_m_rescaling}
 \begin{split}
 m_{\rho}(u,t)&=\leb\{w:\ \exists s\leq t\ X_{\rho}(u,s)=X_{\rho}(w,s)\}\\
 &=\frac{1}{\rho}m(u\rho-q,t\rho^{\gamma})\quad\mbox{for all}\ \  u\in(a,b),\ \ t\in[0,T/\rho^{\gamma}].
  \end{split}
 \end{align}
\end{lem}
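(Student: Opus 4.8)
The plan is to reduce the whole statement to the deterministic change of variables $\phi(u)=u\rho-q$ in space together with the linear time change $s\mapsto s\rho^\gamma$, and to exploit how continuous $L_2$-martingales and their quadratic variations transform under these. Note that $\phi$ is an increasing affine homeomorphism of $(a,b)=(q/\rho,(q+1)/\rho)$ onto $(0,1)$ (here $\rho>0$ is used), with $\leb(\phi^{-1}(A))=\rho^{-1}\leb(A)$, and that by definition $X_\rho(u,s)=\rho^{-\alpha}X(\phi(u),s\rho^\gamma)$. First I would record $X_\rho\in D((a,b),C[0,T/\rho^\gamma])$: $X_\rho$ is the composition of the process $X$, which is c\`adl\`ag in the space variable and continuous in time, with the orientation-preserving homeomorphisms $\phi$ and $s\mapsto s\rho^\gamma$ followed by the scaling $\rho^{-\alpha}$, all of which preserve the Skorohod structure. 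Condition $(C2)$ is then immediate, since $X_\rho(u,0)=\rho^{-\alpha}X(\phi(u),0)=\rho^{-\alpha}g(u\rho-q)$, and $(C3)$ follows because $u<v$ gives $\phi(u)<\phi(v)$, hence $X(\phi(u),\cdot)\leq X(\phi(v),\cdot)$ by $(C3)$ for $X$, and multiplication by $\rho^{-\alpha}>0$ preserves the order.

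For $(C1)$ the key point is that the natural filtration of $X_\rho$ is a time change of that of $X$. Since the constant factor $\rho^{-\alpha}$ leaves $\sigma$-algebras unchanged and, as $w$ ranges over $(a,b)$ and $r$ over $[0,t]$, the arguments $\phi(w)$ and $r\rho^\gamma$ range over $(0,1)$ and $[0,t\rho^\gamma]$, one gets $\F_t^\rho:=\sigma(X_\rho(w,r),\ r\leq t,\ w\in(a,b))=\F_{t\rho^\gamma}$. Consequently, for $s<t$ the martingale property of $X(\phi(u),\cdot)$ with respect to $(\F_{s'})$ yields $\E(X_\rho(u,t)\mid\F_s^\rho)=\rho^{-\alpha}\E(X(\phi(u),t\rho^\gamma)\mid\F_{s\rho^\gamma})=\rho^{-\alpha}X(\phi(u),s\rho^\gamma)=X_\rho(u,s)$, while square integrability is clearly preserved by the scaling; this gives $(C1)$.

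The mass formula \eqref{f_m_rescaling} follows from the same substitution. Since $X_\rho(u,s)=X_\rho(w,s)$ is equivalent to $X(\phi(u),s\rho^\gamma)=X(\phi(w),s\rho^\gamma)$, the coincidence set in the $w$-variable is exactly $\phi^{-1}$ of $\{w':\ \exists s'\leq t\rho^\gamma,\ X(\phi(u),s')=X(w',s')\}$; applying $\leb(\phi^{-1}(\cdot))=\rho^{-1}\leb(\cdot)$ gives $m_\rho(u,t)=\rho^{-1}m(\phi(u),t\rho^\gamma)$.

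Finally, $(C4)$ is the step where the exponent $\gamma=2\alpha+1$ does its work, and I expect it to be the only part requiring genuine care, namely the bookkeeping of the three scaling factors. Using that a linear time change scales the joint quadratic variation in the same way, $\langle X_\rho(u,\cdot),X_\rho(v,\cdot)\rangle_t=\rho^{-2\alpha}\langle X(\phi(u),\cdot),X(\phi(v),\cdot)\rangle_{t\rho^\gamma}$. Inserting $(C4)$ for $X$ and substituting $s'=s\rho^\gamma$ (so $ds'=\rho^\gamma\,ds$), I would then replace $m(\phi(u),s\rho^\gamma)$ by $\rho\,m_\rho(u,s)$ from the mass formula, and note that the meeting times obey $\tau^\rho_{u,v}=\rho^{-\gamma}\tau_{\phi(u),\phi(v)}$ (with the caps $T/\rho^\gamma$ and $T$ matching), so $\I_{\{\tau_{\phi(u),\phi(v)}\leq s\rho^\gamma\}}=\I_{\{\tau^\rho_{u,v}\leq s\}}$. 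Collecting the constants produces the prefactor $\rho^{-2\alpha}\cdot\rho^\gamma\cdot\rho^{-1}=\rho^{\gamma-2\alpha-1}$, which equals $1$ precisely because $\gamma=2\alpha+1$, leaving $\int_0^t\I_{\{\tau^\rho_{u,v}\leq s\}}m_\rho(u,s)^{-1}\,ds$ and hence $(C4)$ for $X_\rho$.
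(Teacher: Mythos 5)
Your proposal is correct and follows essentially the same route as the paper: the same change of variables $u\mapsto u\rho-q$, $s\mapsto s\rho^{\gamma}$ for the mass formula, and the same bookkeeping $\rho^{-2\alpha}\cdot\rho^{\gamma}\cdot\rho^{-1}=1$ for $(C4)$, which is exactly where the paper concentrates its effort after declaring $(C1)$--$(C3)$ trivial. Your additional details on the filtration identity $\F_t^{\rho}=\F_{t\rho^{\gamma}}$ and on membership in $D((a,b),C[0,T/\rho^{\gamma}])$ merely spell out what the paper omits.
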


\begin{proof}
 The proof of $(C1)-(C3)$ are trivial. We will only check $(C4)$. Let $u,v\in(q/\rho,(q+1)/\rho)$ and $t\in[0,T/\rho^{\gamma}]$. We first evaluate
 \begin{align*}
  m_{\rho}(u,t)&=\leb\left\{w:\ \exists s\leq t\ \ X_{\rho}(u,s)=X_{\rho}(w,s)\right\}\\
  &=\leb\left\{w:\ \exists s\leq t\ \ \frac{1}{\rho^{\alpha}}X(u\rho-q,s\rho^{\gamma})=\frac{1}{\rho^{\alpha}}X(w\rho-q,s\rho^{\gamma})\right\}\\
  &=\leb\left\{\frac{w+q}{\rho}:\ \exists s\leq t\rho^{\gamma}\ \ X(u\rho-q,s)=X(w,s)\right\}\\
  &=\frac{1}{\rho}\leb\left\{w:\ \exists s\leq t\rho^{\gamma}\ \ X(u\rho-q,s)=X(w,s)\right\}\\
  &=\frac{1}{\rho}m(u\rho-q,t\rho^{\gamma}).
 \end{align*}
Next, let $\tau_{u,v}^{(\rho)}=\inf\{t:\ X_{\rho}(u,t)=X_{\rho}(v,t)\}\wedge \frac{T}{\rho^{\gamma}}$. It is easily seen that
$$
\tau_{u,v}^{(\rho)}=\frac{\tau_{u\rho-q,v\rho-q}}{\rho^{\gamma}},
$$
where $\tau_{u',v'}=\inf\{t:\ X(u',t)=X(v',t)\}\wedge T$, \ $u',v'\in(0,1)$.
Then
 \begin{align*}
 \langle X_{\rho}(u,\cdot),&X_{\rho}(v,\cdot)\rangle_t=\left\langle\frac{1}{\rho^{\alpha}}X(u\rho-q,\cdot\rho^{\gamma}),\frac{1}{\rho^{\alpha}}X(v\rho-q,\cdot\rho^{\gamma})\right\rangle_t=\\
 &=\frac{1}{\rho^{2\alpha}}\left\langle X(u\rho-q,\cdot),X(v\rho-q,\cdot)\right\rangle_{t\rho^{\gamma}}=\\
 &=\frac{1}{\rho^{2\alpha}}\int_0^{t\rho^{\gamma}}\frac{\I_{\{\tau_{u\rho-q,v\rho-q}\leq s\}}}{m(u\rho-q,s)}ds\\
 &=\rho^{\gamma-2\alpha}\int_0^t\frac{\I_{\{\tau_{u\rho-q,v\rho-q}\leq s\rho^{\gamma}\}}}{m(v\rho-q,s\rho^{\gamma})}ds
 =\int_0^t\frac{\I_{\{\tau_{u,v}^{(\rho)}\leq s\}}ds}{m_{\rho}(u,s)},
 \end{align*}
if $\gamma-2\alpha=1$.
It finishes the proof of the lemma.
\end{proof}

\subsection{Estimation of the expectation of mass}

Let $u_0\in(0,1)$ be fixed. In this subsection we estimate the expectation $\E m(u_0,t)$ in the case where the function $g(u_0+\cdot)-g(u_0)$ is locally (at zero) similar to $|u|^\alpha$, $u\leq\delta$. To get the estimation we use the rescaling property of $X$. The following statement holds.

\begin{lem}\label{lem_boundedness_of_m}
 Let $X_{\rho}$ be defined by~\eqref{f_X_rho} with $q=-u_0$ and there exists $C>0$ such that $\E m_{\rho}(0,T)\leq C$ for all $\rho\in(0,1]$. Then
 $$
 \E m(u_0,t)\leq C t^{\frac{1}{2\alpha+1}},\quad t\in(0,T].
 $$
\end{lem}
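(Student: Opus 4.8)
The plan is to read off the conclusion directly from the rescaling identity \eqref{f_m_rescaling} of Lemma~\ref{lem_rescaling_property}, specialized to $q=-u_0$ and evaluated at the spatial point $u=0$. Since $0\cdot\rho-q=u_0$, that identity gives, for every $\rho\in(0,1]$,
$$
m_{\rho}(0,t)=\frac{1}{\rho}\,m(u_0,t\rho^{\gamma}),\qquad t\in[0,T/\rho^{\gamma}],
$$
with $\gamma=2\alpha+1$. In particular, taking $t=T$ (admissible because $T\le T/\rho^{\gamma}$ when $\rho\le1$) yields the exact relation $m_{\rho}(0,T)=\frac{1}{\rho}m(u_0,T\rho^{\gamma})$, that is, $m(u_0,T\rho^{\gamma})=\rho\,m_{\rho}(0,T)$.

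Taking expectations and inserting the hypothesis $\E m_{\rho}(0,T)\le C$, I would obtain
$$
\E m(u_0,T\rho^{\gamma})=\rho\,\E m_{\rho}(0,T)\le C\rho,\qquad \rho\in(0,1].
$$
It then remains only to invert the time change. Given $t\in(0,T]$, set $\rho=(t/T)^{1/\gamma}\in(0,1]$, so that $T\rho^{\gamma}=t$; substituting into the previous display gives $\E m(u_0,t)\le C(t/T)^{1/(2\alpha+1)}$, which is the asserted bound (up to absorbing the harmless factor $T^{-1/(2\alpha+1)}$ into the constant).

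There is no genuine analytic obstacle here: the statement is a pure scaling consequence, and all the substantive work was done in Lemma~\ref{lem_rescaling_property}. The only points requiring a little care are \emph{(a)} checking that the evaluation $m_{\rho}(0,T)$ is legitimate, i.e. that $u=0$ lies in the spatial domain $(-u_0/\rho,(1-u_0)/\rho)$ (true since $u_0\in(0,1)$) and that $T$ lies in the time domain $[0,T/\rho^{\gamma}]$ (true for $\rho\le1$); and \emph{(b)} confirming that the continuous strictly increasing map $\rho\mapsto T\rho^{\gamma}$ carries $(0,1]$ onto $(0,T]$, so that every $t\in(0,T]$ is reached by exactly one admissible $\rho$. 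Once these are noted, the bound follows immediately by the substitution above.
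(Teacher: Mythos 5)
Your proof is correct and follows exactly the paper's own argument: apply the rescaling identity \eqref{f_m_rescaling} at $u=0$, $q=-u_0$, take expectations, and invert the time change $t=T\rho^{\gamma}$. If anything, you are slightly more careful than the paper about the domain checks and the factor $T^{-1/(2\alpha+1)}$ (which the paper silently absorbs into the constant).
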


\begin{proof}
 By~\eqref{f_m_rescaling},
 $$
 \E\frac{1}{\rho}m(u_0,T\rho^{\gamma})=\E m_{\rho}(0,T)\leq C,
 $$
 where $\gamma=2\alpha+1$. So,
 $$
  \E m(u_0,T\rho^{\gamma})\leq C\rho,\quad \rho\in(0,1].
 $$
 It implies the needed estimation if $t=T\rho^{\gamma}\in(0,T]$.
\end{proof}

\begin{prp}\label{prp_estimation_Em}
 Under the assumptions of Theorem~\ref{prp_LIL_1}, there exists $C'=C'_{u_0,\alpha,g}>0$ such that
 \begin{equation}\label{f_est_of_mass}
 \E m(u_0,t)\leq C't^{\frac{1}{2\alpha+1}},\quad t\in[0,T].
 \end{equation}
\end{prp}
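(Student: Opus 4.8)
The plan is to invoke Lemma~\ref{lem_boundedness_of_m}: taking $q=-u_0$ and $\gamma=2\alpha+1$ in the rescaling~\eqref{f_X_rho}, it suffices to produce a single constant $C$ with $\E m_\rho(0,T)\le C$ for every $\rho\in(0,1]$, since then the estimate~\eqref{f_est_of_mass} follows for every $t=T\rho^\gamma\in(0,T]$ (and trivially at $t=0$). The gain from rescaling is that the initial data of $X_\rho$, namely $g_\rho(u)=\rho^{-\alpha}g(u\rho+u_0)$, obeys a scale-invariant profile: since $(\alpha-1)\vee 0+\alpha\wedge 1=\alpha$, hypothesis $(i)$ gives $|g_\rho(u)-g_\rho(0)|\le C|u|^\alpha$ and $(ii)$ gives $|g_\rho(u)-g_\rho(0)|\ge\frac1C|u|^\alpha$ on the window $|u|\le\delta/\rho$, with constants independent of $\rho$. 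Thus $X_\rho$ is, up to the growing window, one fixed problem, and the whole difficulty is to extract $\rho$-uniform bounds.

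To bound $\E m_\rho(0,T)$ I would write $\E m_\rho(0,T)=\int_0^\infty \p\{m_\rho(0,T)>r\}\,dr$. Because $X_\rho$ satisfies $(C3)$, the cluster of the particle $0$ is an interval $[w_-(T),w_+(T)]\ni 0$, so $\{m_\rho(0,T)>r\}\subseteq\{w_+(T)>r/2\}\cup\{-w_-(T)>r/2\}$, and $w_+(T)>r/2$ forces the particles $0$ and $r/2$ to have met by time $T$ (monotonicity puts the whole segment $[0,r/2]$ into the cluster). Hence $\p\{m_\rho(0,T)>r\}\le \p\{\tau^\rho_{0,r/2}\le T\}+\p\{\tau^\rho_{0,-r/2}\le T\}$, and everything reduces to an upper bound on coalescence probabilities.

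For the meeting probability I would use the nonnegative continuous martingale $N(s)=|X_\rho(w,s)-X_\rho(0,s)|$ with $N(0)=d_w:=|g_\rho(w)-g_\rho(0)|$. Optional stopping at $T\wedge\tau^\rho_{0,w}$ gives $d_w=\E[N(T);\,\tau^\rho_{0,w}>T]$, and Cauchy--Schwarz together with $\E N(T)^2=d_w^2+\E\langle N\rangle_T$ yields $\p\{\tau^\rho_{0,w}\le T\}\le \E\langle N\rangle_T/d_w^2$. Before coalescence the cross variation vanishes, so $\E\langle N\rangle_T\le\int_0^T\!\big(\E\,m_\rho(0,s)^{-1}+\E\,m_\rho(w,s)^{-1}\big)\,ds$. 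The first term is handled by the diffusion-rate estimate (Proposition~\ref{prp_estimation_of_diff}, whose one-sided hypothesis is contained in $(i)$): the exact self-similarity $\gamma=2\alpha+1$ makes $\rho$ cancel in $m_\rho(0,s)=\tfrac1\rho m(u_0,s\rho^\gamma)$, giving $\E\,m_\rho(0,s)^{-1}\le C' s^{-1/(2\alpha+1)}$ uniformly in $\rho$, and $\int_0^T s^{-1/(2\alpha+1)}\,ds<\infty$ precisely because $\alpha>\frac12$. The second term is bounded the same way through Lemma~\ref{lemma_estimation_of_p_m} at the base point $w=\pm r/2$, using the local increment bound supplied by $(i)$. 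Combining with $d_w\ge\frac1C|w|^\alpha$ gives $\p\{\tau^\rho_{0,r/2}\le T\}\lesssim r^{-2\alpha}\E\langle N\rangle_T$, so the tail integral $\int_{r_0}^\infty r^{-2\alpha}\,dr$ converges (again $\alpha>\frac12$), while the range $r\le r_0$ is controlled by $r_0$.

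The main obstacle is uniformity in $\rho$. The single-scale global estimates — for instance $\E|\Pi_{X_\rho(s)}|\le C\rho^{-\gamma/2}s^{-1/2}(1+\|g\|_{L_{2+\eps}})$ or the growth of $\|g_\rho\|_{L_{2+\eps}}$ — blow up as $\rho\to0$ and are unusable; one must instead run every diffusion-rate estimate through the scale-invariant two-sided control of $g_\rho$, so that all constants are $\rho$-free. A second delicate point is the region $|w|\rho>\delta$ outside the controlled window (equivalently $r>2\delta/\rho$), where $(ii)$ no longer applies; there I would bound the contribution crudely using the total mass $b-a=1/\rho$ and the tail estimate $\p\{m_\rho(0,T)>2\delta/\rho\}=\p\{m(u_0,T\rho^\gamma)>2\delta\}\lesssim\rho^{2\alpha}$ (obtained from the same martingale argument applied to the fixed point $u_0+\delta$), so that this piece is $O(\rho^{2\alpha-1})$ and stays bounded exactly when $\alpha>\frac12$.
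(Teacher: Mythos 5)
Your skeleton coincides with the paper's: reduce via Lemma~\ref{lem_boundedness_of_m} to a $\rho$-uniform bound on $\E m_{\rho}(0,T)$; rewrite that expectation through coalescence probabilities (your tail integral $\int_0^{\infty}\p\{m_{\rho}(0,T)>r\}\,dr$ is, by Fubini and the interval structure of the cluster, the paper's $\int\p\{X_{\rho}(0,T)=X_{\rho}(u,T)\}\,du$ up to a factor $2$); bound each coalescence probability by a second-moment argument (your optional stopping plus Cauchy--Schwarz is exactly the Paley--Zygmund step of the paper); and split $\E\langle N\rangle_T\le\xi_{\rho}(0)+\xi_{\rho}(w)$, with $\xi_{\rho}(0)$ handled by Proposition~\ref{prp_estimation_of_diff} and the cancellation $\gamma=2\alpha+1$. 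All of that is correct and matches the paper, as does your observation that the global $L_{2+\eps}$-type estimates are useless here because they blow up under rescaling.

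The gap is the clause ``the second term is bounded the same way,'' followed by ``$\p\{\tau^{\rho}_{0,r/2}\le T\}\lesssim r^{-2\alpha}\E\langle N\rangle_T$, so the tail integral $\int_{r_0}^{\infty}r^{-2\alpha}\,dr$ converges.'' This treats $\xi_{\rho}(w)=\int_0^T\E\,m_{\rho}(w,s)^{-1}\,ds$ as bounded uniformly in $w$, which is false in general. Rescaling hypothesis $(i)$ at base point $w$ gives $g_{\rho}(w)-g_{\rho}(w-v)\le C|w|^{(\alpha-1)\vee 0}v^{\alpha\wedge 1}$, so for $\alpha>1$ the local increment constant grows like $|w|^{\alpha-1}$, the diffusion-rate estimate at $w$ degrades accordingly, and $\xi_{\rho}(w)$ grows with $|w|$. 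This is precisely the pitfall the paper flags (``in general, we should not expect that $\xi_{\cdot}(\cdot)$ is bounded'') and the reason its proof never bounds $\xi_{\rho}(u)$ pointwise: it estimates the integral $\int\xi_{\rho}(u)/(c+g_{\rho}^2(u))\,du$ as a whole, inserting the full two-parameter bound $u^{(\alpha-1)\vee 0}r^{-(\alpha\wedge 1)}$ from $(i)$ against the lower bound $u^{2\alpha}$ from $(ii)$ and checking that the combined exponents integrate for $\alpha>\tfrac12$. Your argument is repairable along your own lines --- track the $w$-dependence to get $\xi_{\rho}(w)\lesssim 1+|w|^{2((\alpha-1)\vee 0)/3}$ and verify that $\int^{\infty}w^{2((\alpha-1)\vee 0)/3-2\alpha}\,dw$ still converges --- but as written the uniformity claim is unjustified, and for $\tfrac12<\alpha\le 1$ only is it actually true. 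A minor further inaccuracy: in your outer region $r>2\delta/\rho$ the exponent $\rho^{2\alpha}$ should be $\rho^{(4\alpha+2)/3}$ when $\alpha>1$, since the meeting-probability bound there also involves $\E\,m(u_0+\delta,s)^{-1}$, for which $(i)$ only supplies the exponent $\alpha\wedge 1$; the contribution remains bounded either way, so this does not affect the conclusion.
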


\begin{proof}
According to Lemma~\ref{lem_boundedness_of_m}, it is enough to show the boundedness of $\E m_{\rho}(0,T)$. Let $\gamma=2\alpha+1$, $a=-u_0$, $b=(1-u_0)$. Here we will consider $X_{\rho}(u,t)$, $u\in(a/\rho,b/\rho)$, $t\in[0,T/\rho^{\gamma}]$, as a process from $D((a/\rho,b/\rho),C[0,T/\rho^{\gamma}])$ which satisfies $(C1)-(C4)$. Note that according to $(C2)$,
\begin{equation}\label{f_X_rho_0}
 X_{\rho}(u,0)=g_{\rho}(u)=\frac{1}{\rho^{\alpha}}g(u\rho+u_0),\quad u\in(a/\rho,b/\rho).
\end{equation}

Without loss of generality, we may suppose that $g_{\rho}(0)=\frac{1}{\rho^{\alpha}}g(u_0)=0$. If $g(u_0)\neq 0$, then we can consider the new process $X_{\rho}(u,t)-g_{\rho}(u_0)$, $u\in(a/\rho,b/\rho)$, $t\in[0,T/\rho^{\gamma}]$, instead of $X_{\rho}$.

By~\eqref{f_coalescing}, we have
\begin{align*}
 \E m_{\rho}(0,T)&=\E\int_{\frac{a}{\rho}}^{\frac{b}{\rho}}\I_{\{X_{\rho}(0,T)=X_{\rho}(u,T)\}}du
 =\int_{\frac{a}{\rho}}^{\frac{b}{\rho}}\p\left\{X_{\rho}(0,T)=X_{\rho}(u,T)\right\}du\\
 &=\int_{\frac{a}{\rho}}^0\p\left\{X_{\rho}(0,T)=X_{\rho}(u,T)\right\}du+\int_0^{\frac{b}{\rho}}\p\left\{X_{\rho}(0,T)=X_{\rho}(u,T)\right\}du.
\end{align*}
Next we estimate only the integral $\int_0^{\frac{b}{\rho}}$. The integral $\int_{\frac{a}{\rho}}^0$ can be estimated by the same way.

Set $M_{\rho}(u,t)=X_{\rho}(u,t)-X_{\rho}(0,t)$ for all $u\in(0,b/\rho)$ and $t\in[0,T/\rho^{\gamma}]$. By $(C1)$, $M_{\rho}(u,\cdot)$ is a continuous square integrable martingale starting from $M_{\rho}(u,0)=g_{\rho}(u)-g_{\rho}(0)=\frac{1}{\rho^{\alpha}}g(u\rho+u_0)$.

Using the Paley-Zygmund inequality, we can estimate
\begin{align*}
 \p\{X_{\rho}(0,T)&=X_{\rho}(u,T)\}=1-\p\{M_{\rho}(u,T)>0\}\\
 &\leq 1-\frac{\left(\E M_{\rho}(u,T)\right)^2}{\E M_{\rho}^2(u,T)}=1-\frac{g_{\rho}^2(u)}{\E M_{\rho}^2(u,T)}\\
 &=1-\frac{g_{\rho}^2(u)}{\Var M_{\rho}(u,T)+g_{\rho}^2(u)}=\frac{\Var M_{\rho}(u,T)}{\Var M_{\rho}(u,T)+g_{\rho}^2(u)}.
\end{align*}

Next, we will estimate $\Var M_{\rho}(u,T)$. By $(C4)$,
\begin{align*}
 \Var M_{\rho}(u,T)&=\E\left(M_{\rho}(u,T)-g_{\rho}(u)\right)^2\\
 &=\E\int_0^T\left(\frac{1}{m_{\rho}(u,s)}+\frac{1}{m_{\rho}(0,s)}-\frac{\I_{\{\tau_{0,u}^{(\rho)}\leq s\}}}{m_{\rho}(u,s)}\right)ds\\
 &\leq \int_0^T\E\frac{1}{m_{\rho}(u,s)}ds+\int_0^T\E\frac{1}{m_{\rho}(0,s)}ds=\xi_{\rho}(u)+\xi_\rho(0),
\end{align*}
where $\xi_{\rho}(u)=\int_0^T\E\frac{1}{m_{\rho}(u,s)}ds$.

It should be noted that $\xi_{\rho}(0)$, $\rho\in(0,1]$, is bounded. Indeed, inserting $\varsigma=u-u_0$ in $(i)$, we can see that
$$
g(u)-g(u_0)\leq C(u-u_0)^{\alpha}\quad \mbox{if}\ \ u-u_0\leq\delta.
$$
Thus, by Proposition~\ref{prp_estimation_of_diff}, we have the estimation
$$
 \E\frac{1}{m(u_0,t)}\leq \frac{C'}{t^{\frac{1}{2\alpha+1}}},\quad t\in(0,T].
$$
Hence, for all $\rho\in(0,1]$
\begin{align*}
 \xi_{\rho}(0)&=\int_0^T\E\frac{1}{m_{\rho}(0,s)}ds=\rho\int_0^T\E\frac{1}{m(u_0,s\rho^{\gamma})}ds\\
 &\leq C'\rho\int_0^T\frac{1}{(s\rho^{\gamma})^{\frac{1}{2\alpha+1}}}ds=C'\int_0^T\frac{1}{s^{\frac{1}{2\alpha+1}}}ds\leq c,
\end{align*}
where $c$ is a constant.

Consequently,
\begin{align*}
 \p\{X_{\rho}(0,T)&=X_{\rho}(u,T)\}\leq\frac{\xi_{\rho}(u)+c}{\xi_{\rho}(u)+c+g_{\rho}^2(u)}.
\end{align*}
Hence,
\begin{align*}
 \int_0^{\frac{b}{\rho}}\p\{X_{\rho}(0,T)&=X_{\rho}(u,T)\}du\leq\int_0^{\frac{b}{\rho}}\frac{\xi_{\rho}(u)+c}{\xi_{\rho}(u)+c+g_{\rho}^2(u)}du\\
 &\leq b+\int_b^{\frac{b}{\rho}}\frac{\xi_{\rho}(u)+c}{c+g_{\rho}^2(u)}du.
\end{align*}
First we estimate $\int_b^{\frac{b}{\rho}}\frac{c}{c+g_{\rho}^2(u)}du$, using $(ii)$. Let $b\rho<\delta$. Then
\begin{align*}
 \int_b^{\frac{b}{\rho}}\frac{c}{c+g_{\rho}^2(u)}du&=\int_b^{\frac{b}{\rho}}\frac{c}{c+\frac{1}{\rho^{2\alpha}}g^2(u\rho+u_0)}du=\frac{1}{\rho}\int_{b\rho}^b\frac{c}{c+\frac{1}{\rho^{2\alpha}}g^2(u+u_0)}du\\
 &=\rho^{2\alpha-1}\int_{b\rho}^b\frac{c}{c\rho^{2\alpha}+g^2(u+u_0)}du\leq \rho^{2\alpha-1}\int_{b\rho}^b\frac{c}{g^2(u+u_0)}du\\
 &\leq\rho^{2\alpha-1}\int_{\delta}^b\frac{c}{g^2(\delta+u_0)}du+\rho^{2\alpha-1}\int_{b\rho}^{\delta}\frac{c}{g^2(u+u_0)}du\\
 &\leq C_1\rho^{2\alpha-1}+\rho^{2\alpha-1}\int_{b\rho}^{\delta}\frac{c}{u^{2\alpha}}du\leq C_2,
\end{align*}
since $\alpha>\frac{1}{2}$.

Next we estimate $\int_b^{\frac{b}{\rho}}\frac{\xi_{\rho}(u)}{c+g_{\rho}^2(u)}du$. Note that if $\xi_{\rho}(u)$ was bounded with respect to $\rho$ and $u$ (e.g. it is true if $g(u)=u$, $u\in(0,1)$), then the integral would be bounded with respect to $\rho$. It would prove the proposition. In general, we should not expect that $\xi_{\cdot}(\cdot)$ is bounded, since it depends on local properties of $g_{\rho}$ at each point. So, in order to prove the boundedness of the integral, we will use Lemma~\ref{lemma_estimation_of_p_m}.

Since $\frac{1}{m_{\rho}(u,s)}\in[\rho,\infty)$ for all $s\in(0,T]$ and $u\in (b,b/\rho)$, we can estimate
\begin{align*}
 \E\frac{1}{m_{\rho}(u,s)}&=\int_{\rho}^{+\infty}\p\left\{m_{\rho}(u,s)<\frac{1}{r}\right\}dr\\
 &\leq\int_{\rho}^{\theta}dr+\int_{\theta}^{+\infty}\p\left\{m_{\rho}(u,s)<\frac{1}{r}\right\}dr\\
 &\leq\theta+\frac{2}{\sqrt{2\pi s}}\int_{\theta}^{+\infty}\frac{g_{\rho}(u)-g_{\rho}(u-1/r)}{\sqrt{r}}dr,
\end{align*}
where $\theta>\frac{1}{b}\vee 1$ is fixed.

Thus,
\begin{align*}
\int_b^{\frac{b}{\rho}}\frac{\xi_{\rho}(u)}{c+g_{\rho}^2(u)}du&\leq \int_b^{\frac{b}{\rho}}\frac{1}{c+g_{\rho}^2(u)}\left(\int_0^T\E\frac{1}{m_{\rho}(u,s)}ds\right)du\\
&\leq \int_b^{\frac{b}{\rho}}\frac{\theta T}{c+g_{\rho}^2(u)}du+\frac{4\sqrt{T}}{\sqrt{2\pi}}\int_b^{\frac{b}{\rho}}\left(\int_{\theta}^{+\infty}\frac{g_{\rho}(u)-g_{\rho}(u-1/r)}{\sqrt{r}g_{\rho}^2(u)}dr\right)du.
\end{align*}
Note that $\int_b^{\frac{b}{\rho}}\frac{\theta T}{c+g_{\rho}^2(u)}du$ is bounded with respect to $\rho$. Let us estimate
\begin{align*}
 \int_b^{\frac{b}{\rho}}&\left(\int_{\theta}^{+\infty}\frac{g_{\rho}(u)-g_{\rho}(u-1/r)}{\sqrt{r}g_{\rho}^2(u)}dr\right)du\\
 &=\int_b^{\frac{b}{\rho}}\left(\int_{\theta}^{+\infty}\frac{\frac{1}{\rho^{\alpha}}g(u\rho+u_0)-\frac{1}{\rho^{\alpha}}g((u-1/r)\rho+u_0)}{\sqrt{r}\frac{1}{\rho^{2\alpha}}g^2(u\rho+u_0)}dr\right)du\\
 &=\rho^{\alpha-\frac{1}{2}}\int_{b\rho}^b\left(\int_{\frac{\theta}{\rho}}^{+\infty}\frac{g(u+u_0)-g(u+u_0-1/r)}{\sqrt{r}g^2(u+u_0)}dr\right)du=I.
\end{align*}
Let $b\rho<\delta$. Then by $(i)$ and $(ii)$,
\begin{align*}
 I&\leq\frac{\rho^{\alpha-\frac{1}{2}}}{g^2(\delta+u_0)}\int_{\delta}^b\left(\int_{\frac{\theta}{\rho}}^{+\infty}\frac{g(u+u_0)-g(u+u_0-1/r)}{\sqrt{r}}dr\right)du\\
 &+\rho^{\alpha-\frac{1}{2}}\int_{b\rho}^{\delta}\left(\int_{\frac{\theta}{\rho}}^{+\infty}\frac{g(u+u_0)-g(u+u_0-1/r)}{\sqrt{r}g^2(u+u_0)}dr\right)du\\
 &\leq C_3\frac{\rho^{\alpha-\frac{1}{2}}}{g^2(\delta+u_0)}(1+\|g\|_{L_{2+\eps}})+C^2\rho^{\alpha-\frac{1}{2}}\int_{b\rho}^{\delta}\left(\int_{\frac{\theta}{\rho}}^{+\infty}\frac{u^{(\alpha-1)\vee 0}}{r^{\frac{1}{2}+\alpha\wedge 1}u^{2\alpha}}dr\right)du\\
 &\leq C_4\rho^{\alpha-\frac{1}{2}}+C^2\rho^{\alpha-\frac{1}{2}}\int_{b\rho}^{\delta}\left(\int_{\frac{\theta}{\rho}}^{+\infty}\frac{u^{(\alpha-1)\vee 0}}{r^{\frac{1}{2}+\alpha\wedge 1}u^{2\alpha}}dr\right)du.
\end{align*}
Here the integral $\int_{\delta}^b\left(\int_{\frac{\theta}{\rho}}^{+\infty}\frac{g(u+u_0)-g(u+u_0-1/r)}{\sqrt{r}}dr\right)du$ is estimated similarly as in the proof of Proposition~\ref{proposition_estimation_of_m}. Since $\alpha>\frac{1}{2}$, it is easy to see that the right hand side of the latter inequality is bounded by a constant that is independent of $\rho$. It finishes the proof of the proposition.
\end{proof}

\section{Asymptotic behavior (proofs of theorems~\ref{prp_LIL_1} and~\ref{prp_LIL_2})}\label{asymptotic_behavior}

\begin{proof}[Proof of Theorem~\ref{prp_LIL_1}]
 To prove the theorem, we will only use inequality~\eqref{f_est_of_mass} and the fact that $X(u_0,\cdot)$ is a continuous square integrable martingale with the quadratic variation $\langle X(u_0,\cdot)\rangle_t=\int_0^t\frac{ds}{m(u_0,s)}$, $t\in[0,T]$.

 Let $\theta>0$, $\lambda<1$ and $t_n=\lambda^n$, $n\in\N$. Set
 $$
 A_n=\{m(u_0,t)>\theta\varphi(t)\quad\mbox{for some}\ \ t\in(t_{n+1},t_n]\},
 $$
 where $\varphi(t)=t^{\frac{1}{2\alpha+1}}\left(\ln\frac{1}{t}\right)^{1+\epsilon}$, $t\in(0,T]$. Using the monotonicity of $m(u_0,\cdot)$ and $\varphi$, Chebyshev's inequality and Proposition~\ref{prp_estimation_Em}, we can estimate
 \begin{align*}
  \p\{A_n\}&\leq\p\{m(u_0,t_n)>\theta\varphi(t_{n+1})\}\leq\frac{1}{\theta\varphi(t_{n+1})}\E m(u_0,t_n)\\
  &\leq\frac{Ct_n^{\frac{1}{2\alpha+1}}}{\theta t_{n+1}^{\frac{1}{2\alpha+1}}\left(\ln\frac{1}{t_{n+1}}\right)^{1+\epsilon}}=\frac{C}{\theta \lambda^{\frac{1}{2\alpha+1}}\left(\ln\frac{1}{\lambda}\right)^{1+\epsilon}(n+1)^{1+\epsilon}}.
 \end{align*}
 Hence, $\sum_{n=1}^{\infty}\p\{A_n\}$ converges and consequently, by the Borel-Cantelli lemma,
 $$
 \p\left\{\varlimsup_{n\to\infty}A_n\right\}=0.
 $$
 It implies~\eqref{f_LIL_m_1}.

 Next we will prove~\eqref{f_LIL_Y_1}. By Theorem~2.7.2'~\cite{Watanabe:1981:en}, there exists a Wiener process $B(t)$, $t\geq 0$, (maybe on an extended probability space) such that
 \begin{equation}\label{f_representation_of_X}
 X(u_0,t)=g(u_0)+B(\langle X(u_0,\cdot)\rangle_t),\quad t\in[0,T].
 \end{equation}
 So, to get~\eqref{f_LIL_Y_1}, we will use the law of the iterated logarithm for the Wiener process (see e.g. Theorem~13.18~\cite{Kallenberg:2002}) and the latter relation.

 We first estimate the quadratic variation of $X(u_0,\cdot)$. Set
 $$
 \Omega'=\left\{\exists t_0>0:\ \forall t\in(0,t_0]\ \ m(u_0,t)\leq t^{\beta}\left(\ln\frac{1}{t}\right)^{1+2\epsilon}\right\},
 $$
 where $\beta=\frac{1}{2\alpha+1}$. By~\eqref{f_LIL_m_1}, $\p\{\Omega'\}=1$. So, let $\omega\in\Omega'$ and $t\in(0,t_0(\omega)]$. Then
 $$
 \langle X(u_0,\cdot)\rangle_t(\omega)=\int_0^t\frac{ds}{m(u_0,s,\omega)}\geq\int_0^t\frac{ds}{s^{\beta}\left(\ln\frac{1}{s}\right)^{1+2\epsilon}}.
 $$
  By L'Hopital's rule, we obtain
 $$
 \lim_{t\to 0}\frac{\int_0^t\frac{ds}{s^{\beta}\left(\ln\frac{1}{s}\right)^{1+2\epsilon}}}{t^{1-\beta}\left(\ln\frac{1}{t}\right)^{-1-2\epsilon}}=\frac{1}{1-\beta}.
 $$
 Hence, there exists $t_1(\omega)\in(0,t_0(\omega)]$ such that
 $$
 \langle X(u_0,\cdot)\rangle_t(\omega)\geq\frac{1}{2(1-\beta)}t^{1-\beta}\left(\ln\frac{1}{t}\right)^{-1-2\epsilon}\quad\mbox{for all}\ \  t\in(0,t_1(\omega)].
 $$
 Thus, using the law of the iterated logarithm for the Wiener process and~\eqref{f_representation_of_X}, we have almost surely
 \begin{align*}
  1&=\varlimsup_{t\to 0}\frac{|X(u_0,t)-g(u_0)|}{\sqrt{2\langle X(u_0,\cdot)\rangle_t\ln\ln\frac{1}{\langle X(u_0,\cdot)\rangle_t}}}\leq \varlimsup_{t\to 0}\frac{|X(u_0,t)-g(u_0)|}{\sqrt{2\langle X(u_0,\cdot)\rangle_t}}\\
  &\leq \varlimsup_{t\to 0}\frac{\sqrt{(1-\beta)}|X(u_0,t)-g(u_0)|}{t^{\frac{1-\beta}{2}}\left(\ln\frac{1}{t}\right)^{-\frac{1}{2}-\epsilon}}.
 \end{align*}
 Since $\epsilon>0$ is arbitrary, we obtain~\eqref{f_LIL_Y_1}. The theorem is proved.
\end{proof}

\begin{proof}[Proof of Theorem~\ref{prp_LIL_1}]
 The statement follows from Proposition~\ref{prp_estimation_of_diff} and the same argument as in the proof of Theorem~\ref{prp_LIL_1}.
\end{proof}

\begin{rem}\label{rem_asymprotic_bahavior_of_Z}
 If $X$ is the process constructed in~\cite{Konarovskyi:2014:arx}, i.e. $g(u)=u$, $u\in[0,1]$, then for all $u\in(0,1)$ and $\epsilon>0$
 \begin{align*}
  \p\left\{\lim_{t\to 0}\frac{m(u,t)}{\sqrt[3]{t}\left(\ln\frac{1}{t}\right)^{1+\epsilon}}=0\right\}
  &=\p\left\{\lim_{t\to 0}\frac{m(u,t)}{\sqrt[3]{t}\left(\ln\frac{1}{t}\right)^{-1-\epsilon}}=+\infty\right\}=1,\\
  \p\left\{\lim_{t\to 0}\frac{|X(u,t)-u|}{\sqrt[3]{t}\left(\ln\frac{1}{t}\right)^{\frac{1}{2}+\epsilon}}=0\right\}
  &=\p\left\{\varlimsup_{t\to 0}\frac{|X(u,t)-u|}{\sqrt[3]{t}\left(\ln\frac{1}{t}\right)^{-\frac{1}{2}-\epsilon}}=+\infty\right\}=1.
 \end{align*}
\end{rem}

\appendix
\section{The space of non-decreasing functions}

Let $a,b\in\R$ and $a<b$. For $c\in(a,b)$ and $\eps>0$ we set
$$
h_{c,\eps}=\frac{1}{\eps\wedge(b-c)}\I_{[c,c+\eps\wedge (b-c)]}.
$$

\begin{prp}\label{prp_L2_Uparrow_crit}
 A function $g\in L_2[a,b]$ belongs to $L_2^{\uparrow}[a,b]$ if and only if
 \begin{enumerate}
  \item[(A)] for all $c_1,c_2\in(a,b)$ and $\eps_1,\eps_2>0$ satisfying $c_1+\eps_1\leq c_2$
 $$
 (g,h_{c_1,\eps_1})\leq(g,h_{c_2,\eps_2}).
 $$
 \end{enumerate}
 Moreover, the modification $\widetilde{g}$ of $g$ from $D^{\uparrow}$ is given as follows
 $$
 \widetilde{g}(u)=\lim_{\eps\to 0+}(g,h_{u,\eps}),\quad u\in(a,b),
 $$
 and
 $$
\widetilde{g}(a)=\lim_{u\to a+}\widetilde{g}(u),\quad\widetilde{g}(b)=\lim_{u\to b-}\widetilde{g}(u).
$$
\end{prp}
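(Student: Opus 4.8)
The plan is to pass to the antiderivative and recognize condition (A) as convexity. Set $F(u)=\int_a^u g(s)\,ds$, which is absolutely continuous on $[a,b]$ because $g\in L_2[a,b]\subset L_1[a,b]$. Writing $\eta_c=\eps\wedge(b-c)$, we have
\[
(g,h_{c,\eps})=\frac{1}{\eta_c}\int_c^{c+\eta_c}g(s)\,ds=\frac{F(c+\eta_c)-F(c)}{\eta_c},
\]
so every quantity appearing in (A) is a difference quotient of $F$ over the interval $[c,c+\eta_c]$. The whole statement then reduces to the equivalence: (A) holds if and only if $F$ is convex on $(a,b)$, together with the identification $\widetilde g=F'_+$.

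For necessity, suppose $g\in L_2^{\uparrow}[a,b]$ and let $\widetilde g\in D^{\uparrow}$ be its monotone modification. Given $c_1+\eps_1\le c_2$, note $c_1+\eta_{c_1}\le c_1+\eps_1\le c_2$, so the two averaging intervals $[c_1,c_1+\eta_{c_1}]$ and $[c_2,c_2+\eta_{c_2}]$ are disjoint and correctly ordered. Since $\widetilde g(s)\le \widetilde g(t)$ whenever $s$ lies in the first interval and $t$ in the second, integrating this pointwise inequality in $s$ and $t$ gives that the average of $\widetilde g$ over the first interval is at most its average over the second, i.e. $(g,h_{c_1,\eps_1})\le(g,h_{c_2,\eps_2})$, which is (A).

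For sufficiency, I would extract the classical three-point convexity inequality from (A). Fix $c_1<c_2<c_3$ in $(a,b)$ and apply (A) with $\eps_1=c_2-c_1$ and $\eps_2=c_3-c_2$; since $c_2,c_3<b$ the truncations are inactive ($\eta_{c_1}=c_2-c_1$, $\eta_{c_2}=c_3-c_2$), and (A) becomes exactly
\[
\frac{F(c_2)-F(c_1)}{c_2-c_1}\le\frac{F(c_3)-F(c_2)}{c_3-c_2}.
\]
As $F$ is continuous, this for all triples forces $F$ to be convex on $(a,b)$. A convex function has a non-decreasing, right-continuous right derivative $F'_+(u)$ at every $u\in(a,b)$, and $F(v)-F(u)=\int_u^v F'_+(s)\,ds$; combined with Lebesgue differentiation $F'=g$ a.e., this yields $F'_+=g$ a.e. Hence $\widetilde g:=F'_+$ is an element of $D^{\uparrow}$ with $\widetilde g=g$ a.e., so $g\in L_2^{\uparrow}$. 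The displayed formula follows since for $\eps<b-u$ the difference quotient $\frac{F(u+\eps)-F(u)}{\eps}=(g,h_{u,\eps})$ converges to $F'_+(u)=\widetilde g(u)$ as $\eps\to0+$; the boundary values $\widetilde g(a)$ and $\widetilde g(b)$ are then fixed as the monotone one-sided limits (finite or in $\Rr$), consistent with the convention that functions in $D^{\uparrow}$ are continuous at $b$.

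The main obstacle is the reduction step, namely recognizing (A) as convexity of $F$ and handling the truncation $\eps\wedge(b-c)$ so that interior triples give the clean three-point inequality; once $F$ is known to be convex, the existence of $F'_+$ everywhere, its monotonicity and right continuity, and the a.e. identity $F'_+=g$ are standard facts about convex functions and Lebesgue differentiation.
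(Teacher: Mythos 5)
Your argument is correct, and it takes a genuinely different route from the paper. The paper never passes to the antiderivative: it proves directly that $\eps\mapsto(g,h_{c,\eps})$ is non-increasing as $\eps\downarrow 0$ (via an explicit convex-combination identity for the averages) and bounded on compact subintervals, defines $\widetilde{g}(u)=\lim_{\eps\to 0+}(g,h_{u,\eps})$, checks monotonicity and right continuity of $\widetilde{g}$ by hand using the continuity of $u\mapsto(g,h_{u,\eps})$, and then establishes $g=\widetilde{g}$ a.e.\ by testing both functions against continuous $h$ and using a Fubini/mean-value/dominated-convergence argument. You instead observe that $(g,h_{c,\eps})$ is a difference quotient of $F(u)=\int_a^u g$, that (A) with $\eps_1=c_2-c_1$, $\eps_2=c_3-c_2$ is exactly the three-chord inequality, hence $F$ is convex on $(a,b)$, and then import the standard package for convex functions: $F'_+$ exists everywhere, is non-decreasing and right continuous, the difference quotients decrease to $F'_+(u)$, and $F'_+=F'=g$ a.e.\ by Lebesgue differentiation. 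What your approach buys is brevity and a conceptual identification (condition (A) is convexity of the primitive), at the cost of invoking the structure theory of convex functions; the paper's version is longer but self-contained and, as a by-product, isolates the three lemmas (boundedness, monotonicity in $\eps$, continuity in $u$) that are reused verbatim in the proofs of Propositions~\ref{prp_coalescing} and~\ref{prp_modif_from_D_for_mart}. Two small points worth making explicit in your write-up: in the necessity direction the truncation gives $c_1+\eta_{c_1}\leq c_1+\eps_1\leq c_2$, so the ordering of the two averaging intervals is exactly as you say; and in the sufficiency direction $F'_+$ is finite at every interior point, so the only places where $\widetilde{g}$ may take values in $\Rr\setminus\R$ are the endpoints, consistent with the paper's convention for $D^{\uparrow}$.
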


\begin{cor}\label{cor_clasability}
  The set $L_2^{\uparrow}[a,b]$ is closed in $L_2[a,b]$.
\end{cor}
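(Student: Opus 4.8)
The plan is to deduce the corollary directly from the characterization established in Proposition~\ref{prp_L2_Uparrow_crit}. The decisive point is that membership in $L_2^{\uparrow}[a,b]$ is equivalent to condition~(A), which is nothing but a family of linear inequalities $(g,h_{c_1,\eps_1})\le(g,h_{c_2,\eps_2})$ indexed by the fixed test functions $h_{c,\eps}$. Each of these functions is a bounded function supported on a bounded interval, hence lies in $L_2[a,b]$, so the two sides of each inequality are continuous linear functionals of $g$ on $L_2$. Thus the set described by~(A) is an intersection of closed half-spaces and is automatically closed; I would phrase the argument so that this is transparent.

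Concretely, I would take an arbitrary sequence $\{g_n\}_{n\ge1}\subset L_2^{\uparrow}[a,b]$ with $g_n\to g$ in $L_2[a,b]$ and show $g\in L_2^{\uparrow}[a,b]$. Fix any admissible pair of parameters, i.e. $c_1,c_2\in(a,b)$ and $\eps_1,\eps_2>0$ with $c_1+\eps_1\le c_2$. Since each $g_n$ belongs to $L_2^{\uparrow}$, Proposition~\ref{prp_L2_Uparrow_crit} gives
$$
(g_n,h_{c_1,\eps_1})\le(g_n,h_{c_2,\eps_2}),\qquad n\ge1.
$$
By the Cauchy--Schwarz inequality, $|(g_n-g,h_{c,\eps})|\le\|g_n-g\|_{L_2}\|h_{c,\eps}\|_{L_2}\to0$ as $n\to\infty$, so $(g_n,h_{c,\eps})\to(g,h_{c,\eps})$ for both test functions. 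Passing to the limit in the displayed inequality yields $(g,h_{c_1,\eps_1})\le(g,h_{c_2,\eps_2})$. As the pair of parameters was arbitrary, $g$ satisfies~(A), and Proposition~\ref{prp_L2_Uparrow_crit} then guarantees $g\in L_2^{\uparrow}[a,b]$.

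There is essentially no genuine obstacle here: the entire content has been front-loaded into Proposition~\ref{prp_L2_Uparrow_crit}, which replaces the pointwise monotonicity description of $L_2^{\uparrow}$ (awkward under $L_2$-limits, since limits are only defined up to null sets) by the robust integral condition~(A). The only minor points to verify are that $h_{c,\eps}\in L_2[a,b]$, which is immediate, and that the inequality in~(A) is preserved under limits, which is just continuity of the inner product against a fixed vector. I would keep the write-up short and emphasize that closedness is a formal consequence of~(A) being a conjunction of conditions, each closed under $L_2$-convergence.
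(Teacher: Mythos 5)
Your proof is correct and follows exactly the route the paper intends: Corollary~\ref{cor_clasability} is stated immediately after Proposition~\ref{prp_L2_Uparrow_crit} precisely because condition~(A) is a family of inequalities between continuous linear functionals $g\mapsto(g,h_{c,\eps})$ on $L_2$, hence defines a closed set. The sequence argument and the Cauchy--Schwarz verification you give fill in the (omitted) details faithfully.
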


To prove the proposition we will prove several auxiliary lemmas.

\begin{lem}\label{lem_boundedness_of_g}
 Let $g$ satisfy $(A)$. Then for each $\delta>0$ there exists $C>0$ such that $|(g,h_{c,\eps})|\leq C$ for all $c\in(a+\delta,b-2\delta)$ and $\eps<\delta$.
\end{lem}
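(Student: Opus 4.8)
The plan is to exploit the monotonicity in condition $(A)$ through an averaging argument. The starting observation is that for the admissible parameters the truncation is inactive: since $c<b-2\delta$ gives $b-c>2\delta>\eps$, we have $h_{c,\eps}=\frac1\eps\I_{[c,c+\eps]}$ and hence $(g,h_{c,\eps})=\frac1\eps\int_c^{c+\eps}g(u)\,du$ is simply the mean of $g$ over $[c,c+\eps]$. Condition $(A)$ then says exactly that this mean bounds from below every mean taken over an interval lying entirely to its right, and from above every mean over an interval entirely to its left. The strategy is to convert one such family of inequalities into an integral bound controlled by $\|g\|_{L_1}\le\sqrt{b-a}\,\|g\|_{L_2}$.

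For the upper bound I would fix $c\in(a+\delta,b-2\delta)$ and $\eps<\delta$, set $\eps_2=\delta/2$, and note that $c+\eps<b-\delta$, so the interval $[c+\eps,b-\eps_2]$ is nonempty with length $b-\tfrac{\delta}{2}-c-\eps>\tfrac{\delta}{2}$. For every $u$ in this interval $(A)$ yields $(g,h_{c,\eps})\le(g,h_{u,\eps_2})$, where $(g,h_{u,\eps_2})=\frac{1}{\eps_2}\int_u^{u+\eps_2}g(v)\,dv$ because $b-u\ge\eps_2$. Integrating this inequality over $u$ and applying Fubini to $\int(g,h_{u,\eps_2})\,du=\frac{1}{\eps_2}\int\!\!\int g(v)\,dv\,du$ (each $v$ is counted over a $u$-range of length at most $\eps_2$) bounds the right-hand side by $\|g\|_{L_1}$. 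Dividing by the length, which exceeds $\delta/2$, gives $(g,h_{c,\eps})\le 2\|g\|_{L_1}/\delta$.

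For the lower bound I would argue symmetrically with intervals to the left. Taking $\eps_1=\delta/2$ and $u\in[a,c-\delta/2]$ (a range of length $>\delta/2$ since $c>a+\delta$), condition $(A)$ gives $(g,h_{u,\eps_1})\le(g,h_{c,\eps})$; here $b-u>\delta/2=\eps_1$ keeps the truncation inactive. The same Fubini estimate bounds $\int(g,h_{u,\eps_1})\,du$ from below by $-\|g\|_{L_1}$, and dividing by the length yields $(g,h_{c,\eps})\ge -2\|g\|_{L_1}/\delta$. Combining the two estimates proves $|(g,h_{c,\eps})|\le C$ with $C=2\sqrt{b-a}\,\|g\|_{L_2}/\delta$, uniformly in the stated range.

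There is no genuine obstacle here; the proof is an averaging computation. The two points that require care are ensuring the chosen integration ranges keep each truncation $\eps_i\wedge(b-\cdot)$ equal to $\eps_i$ and keep their lengths bounded below by $\delta/2$ uniformly in $c$ and $\eps$. This is precisely what the asymmetric hypotheses $c>a+\delta$ and $c<b-2\delta$ (together with the choices $\eps_i=\delta/2$) are designed to supply, with the extra margin on the right side absorbing the width $\eps$ of the base interval. The Fubini counting is the only mild subtlety, and it is immediate.
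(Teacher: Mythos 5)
Your proof is correct, but it is a more elaborate route than the one the paper takes. The paper's argument is a one-line sandwich: condition $(A)$ applied directly with the two \emph{fixed} test functions $h_{a,\delta}$ and $h_{b-\delta,\delta}$ gives $(g,h_{a,\delta})\leq(g,h_{c,\eps})\leq(g,h_{b-\delta,\delta})$ for every admissible pair $(c,\eps)$ (the constraints $c>a+\delta$ and $c+\eps<b-\delta$ are exactly what make $(A)$ applicable on each side), and one simply takes $C=\max\bigl(|(g,h_{a,\delta})|,|(g,h_{b-\delta,\delta})|\bigr)$, which is finite because $g\in L_2$. Since the lemma only asserts the existence of \emph{some} constant $C$ depending on $g$ and $\delta$, no averaging is needed. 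What your Fubini argument buys is an explicit, cleaner dependence of the constant on $g$, namely $C=2\|g\|_{L_1}/\delta\leq 2\sqrt{b-a}\,\|g\|_{L_2}/\delta$, which would be useful if one wanted a bound uniform over bounded sets in $L_2$; the paper does not need that here. Your bookkeeping of the truncations and of the lengths of the integration ranges is correct (the only cosmetic slip is integrating $u$ down to the endpoint $a$, which lies outside the domain $(a,b)$ in the definition of $h_{u,\eps}$ and in $(A)$ — harmless, since restricting to $(a,c-\delta/2]$ changes nothing).
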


\begin{proof}
  By $(A)$, we have $(g,h_{a,\delta})\leq(g,h_{c,\eps})\leq (g,h_{b-\delta,\delta})$.
\end{proof}

\begin{lem}
 If $g$ satisfies $(A)$, then for each $c\in(a,b)$ and $0<\eps'<\eps$
 $$
 (g,h_{c,\eps'})\leq(g,h_{c,\eps}).
 $$
\end{lem}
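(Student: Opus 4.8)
The plan is to reduce the comparison of the two \emph{nested} averages $(g,h_{c,\eps'})$ and $(g,h_{c,\eps})$, which share the left endpoint $c$, to a comparison of \emph{disjoint}, left-to-right ordered averages — precisely the form in which hypothesis $(A)$ is stated. Write $\eps_0'=\eps'\wedge(b-c)$ and $\eps_0=\eps\wedge(b-c)$, so that $h_{c,\eps'}=h_{c,\eps_0'}$ is the normalized average of $g$ over $[c,c+\eps_0']$ and $h_{c,\eps}=h_{c,\eps_0}$ the average over $[c,c+\eps_0]$. If $\eps_0'=\eps_0$ — which, since $\eps'<\eps$, happens exactly when $\eps'\ge b-c$ — the two functions coincide and there is nothing to prove; so I may assume $\eps_0'<\eps_0\le b-c$.

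The key observation is that $[c,c+\eps_0]$ splits into $[c,c+\eps_0']$ and $[c+\eps_0',c+\eps_0]$, and the average over the whole interval is the corresponding convex combination of the averages over the two pieces:
$$
(g,h_{c,\eps})=\frac{\eps_0'}{\eps_0}\,(g,h_{c,\eps'})+\frac{\eps_0-\eps_0'}{\eps_0}\,(g,h_{c+\eps_0',\,\eps_0-\eps_0'}).
$$
One must check that $h_{c+\eps_0',\,\eps_0-\eps_0'}$ really is the average over $[c+\eps_0',c+\eps_0]$, i.e.\ that the truncation $\wedge\,(b-(c+\eps_0'))$ built into its definition is inactive; this follows from $\eps_0\le b-c$, which gives $\eps_0-\eps_0'\le b-(c+\eps_0')$.

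Next I would apply $(A)$ with $c_1=c$, $\eps_1=\eps_0'$, $c_2=c+\eps_0'$ and $\eps_2=\eps_0-\eps_0'$. These are admissible: $c_1,c_2\in(a,b)$ (note $c+\eps_0'<c+\eps_0\le b$), $\eps_1,\eps_2>0$, and $c_1+\eps_1=c_2$, so the hypothesis applies and yields
$$
(g,h_{c,\eps'})=(g,h_{c,\eps_0'})\le(g,h_{c+\eps_0',\,\eps_0-\eps_0'}).
$$
Feeding this into the convex-combination identity, the value $(g,h_{c,\eps})$ is at least the combination obtained by replacing both entries with the smaller one $(g,h_{c,\eps'})$, which collapses to $(g,h_{c,\eps'})$ itself. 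Hence $(g,h_{c,\eps'})\le(g,h_{c,\eps})$, as claimed.

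The only genuinely delicate point is the bookkeeping around the $\wedge\,(b-c)$ truncations in the definition of $h_{c,\eps}$: one must verify that the subinterval average is represented by a legitimate $h_{c',\eps''}$ of the required form and that every center and width handed to $(A)$ is admissible. The conceptual step — converting a nested comparison into the disjoint, ordered comparison demanded by $(A)$ via the convex-combination identity — is where the argument actually happens, and it is routine once that identity is written down.
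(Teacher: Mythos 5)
Your proof is correct and follows essentially the same route as the paper: the same convex-combination identity splitting the average over $[c,c+\eps]$ into the averages over $[c,c+\eps']$ and $[c+\eps',c+\eps]$, followed by an application of $(A)$ to the two disjoint pieces. The only difference is that you handle the $\wedge\,(b-c)$ truncations explicitly, which the paper's version silently glosses over; this is a harmless (indeed slightly more careful) refinement, not a different argument.
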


\begin{proof}
  The inequality follows from the following simple algebraic transformations
  \begin{align*}
    (g,h_{c,\eps})&=\frac{\eps'}{\eps}(g,h_{c,\eps'})+\frac{\eps-\eps'}{\eps}(g,h_{c+\eps',\eps-\eps'})\\
    &\geq \frac{\eps'}{\eps}(g,h_{c,\eps'})+\frac{\eps-\eps'}{\eps}(g,h_{c,\eps'})=(g,h_{c,\eps'}).
  \end{align*}
\end{proof}

\begin{lem}\label{lem_continuity}
  Let $u,u_n\in(a,b)$, $n\geq 1$, and $u_n\to u$. Then for each $\eps>0$, $(g,h_{u_n,\eps})\to(g,h_{u,\eps})$.
\end{lem}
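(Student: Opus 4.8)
The plan is to reduce the statement to two elementary ingredients: continuity of the inner product on $L_2[a,b]$, and the $L_2$-convergence of a family of indicator functions whose endpoints depend continuously on $c$. For fixed $\eps>0$ I would first isolate the normalizing denominator $\ell(c):=\eps\wedge(b-c)$, where $h_{c,\eps}=\frac{1}{\ell(c)}\I_{[c,c+\ell(c)]}$. Since $u<b$, the function $\ell$ is continuous and strictly positive near $u$, so $\ell(u_n)\to\ell(u)>0$; this disposes of the boundary truncation and of any possible degeneracy of the denominator along the sequence.

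Next I would rewrite the numerator as an inner product with an indicator, $(g,h_{c,\eps})=\frac{1}{\ell(c)}\bigl(g,\I_{[c,c+\ell(c)]}\bigr)$, observing that $c+\ell(c)\leq b$ keeps the interval inside $[a,b]$. The key step is to show that $\I_{[u_n,u_n+\ell(u_n)]}\to\I_{[u,u+\ell(u)]}$ in $L_2[a,b]$. This rests on the elementary estimate
$$
\left\|\I_{[p_1,q_1]}-\I_{[p_2,q_2]}\right\|_{L_2}^2=\leb\bigl([p_1,q_1]\tr[p_2,q_2]\bigr)\leq|p_1-p_2|+|q_1-q_2|,
$$
applied with $p_n=u_n\to u$ and $q_n=u_n+\ell(u_n)\to u+\ell(u)$, the convergence of the right endpoints following from the continuity of $\ell$ established above.

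Finally I would combine the pieces: since $g\in L_2$, continuity of the inner product gives $\bigl(g,\I_{[u_n,u_n+\ell(u_n)]}\bigr)\to\bigl(g,\I_{[u,u+\ell(u)]}\bigr)$, and dividing by $\ell(u_n)\to\ell(u)>0$ yields $(g,h_{u_n,\eps})\to(g,h_{u,\eps})$. I do not expect a genuine obstacle here; the only point demanding care is the truncation $\eps\wedge(b-c)$ in the definition of $h_{c,\eps}$, which is precisely why I peel off $\ell(c)$ and verify its continuity and positivity at $u$ before touching the indicators. I would also remark that property $(A)$ plays no role in this lemma: continuity of $c\mapsto(g,h_{c,\eps})$ is a general feature of any $g\in L_2[a,b]$.
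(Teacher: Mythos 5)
Your proof is correct and is essentially the paper's argument: the paper simply applies the Cauchy--Schwarz inequality, $|(g,h_{u_n,\eps})-(g,h_{u,\eps})|\leq\|h_{u_n,\eps}-h_{u,\eps}\|_{L_2}\|g\|_{L_2}\to 0$, and your decomposition into the continuous positive normalizer $\eps\wedge(b-c)$ plus the $L_2$-convergence of the indicators is exactly the verification of the norm convergence that the paper leaves implicit. Your closing remark that property $(A)$ is not needed is also consistent with the paper's proof.
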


\begin{proof}
  The statement follows from the Cauchy-Schwarz inequality. Indeed,
  $$
  |(g,h_{u_n,\eps})-(g,h_{u,\eps})|\leq\|h_{u_n,\eps}-h_{u,\eps}\|_{L_2}\|g\|_{L_2}\to 0\quad\mbox{as}\ \ n\to\infty.
  $$
\end{proof}

\begin{proof}[Proof of Proposition~\ref{prp_L2_Uparrow_crit}]
We note that if $g\in D^{\uparrow}$, then it is easily seen that $(A)$ holds. So, we need to show that $(A)$ implies that $g$ has a modification from $D^{\uparrow}$.
From the previous lemmas the sequence $\{(g,h_{u,\eps})\}_{\eps>0}$ is bounded and decreasing for all $u\in(a,b)$. Consequently, there exists a limit
$$
\widetilde{g}(u)=\lim_{\eps\to 0+}(g,h_{u,\eps})\quad\mbox{for all}\ \ u\in(a,b).
$$
From $(A)$ it follows that $\widetilde{g}(u)$, $u\in(a,b)$, is increasing. So, we can set
$$
\widetilde{g}(a)=\lim_{u\to a+}\widetilde{g}(u)\quad\mbox{and}\quad\widetilde{g}(b)=\lim_{u\to b-}\widetilde{g}(u).
$$
Let us show that $\widetilde{g}$ belongs to $D^{\uparrow}$. Take $r>0$, $u\in(a,b)$ and a sequence $\{u_n\}_{n\geq 1}\subset (a,b)$ such that $u_n\downarrow u$. Then there exists $\eps\in(0,b-u)$ such that
$$
(g,h_{u,\eps})-\widetilde{g}(u)<\frac{r}{2}.
$$
Since $u_n\downarrow u$, there exists $N$ such that
$$
(g,h_{u_n,\eps})-(g,h_{u,\eps})<\frac{r}{2}
$$
for all $n\geq N$, by Lemma~\ref{lem_continuity}. Thus,
\begin{align*}
  \widetilde{g}(u_n)\leq (g,h_{u_n,\eps})<(g,h_{u,\eps})+\frac{r}{2}<\widetilde{g}(u)+r.
\end{align*}
Since $r$ is arbitrary, $\widetilde{g}(u_n)\to\widetilde{g}(u)$ and we hence obtain that $\widetilde{g}$ belongs to $D^{\uparrow}$.

To finish the proof, we have to show that $g=\widetilde{g}$ a.e. Let $\delta>0$ be fixed. First we note that by Lemma~\ref{lem_boundedness_of_g}, $\widetilde{g}$ is bounded on $[a+\delta,b-\delta]$. Take $h\in C[a+\delta,b-\delta]$ and denote the restrictions of $g$ and $\widetilde{g}$ on $[a+\delta,b-\delta]$ by $g_{\delta}$ and $\widetilde{g}_{\delta}$, respectively. Then by the monotone convergence theorem, we obtain
\begin{align*}
  (\widetilde{g}_{\delta},h)&=\lim_{\eps\to 0}\int_{a+\delta}^{b-\delta}(g,h_{u,\eps})h(u)du\\
  &=\lim_{\eps\to 0}\int_{a+\delta}^{b-\delta}\left(\int_a^b\frac{1}{\eps\wedge(b-u)}g(v)\I_{[u,u+\eps\wedge (b-u)]}(v)h(u)dv\right)du\\
  &=\lim_{\eps\to 0}\int_a^bg(v)\left(\int_{a+\delta}^{b-\delta}\frac{1}{\eps\wedge(b-u)}\I_{[u,u+\eps\wedge (b-u)]}(v)h(u)du\right)dv.
\end{align*}
Using the continuity of $h$, we have that
$$
\int_{a+\delta}^{b-\delta}\frac{1}{\eps\wedge(b-u)}\I_{[u,u+\eps\wedge (b-u)]}(v)h(u)du\to h(v)\quad\mbox{as}\ \ \eps\to 0
$$
for all $v\in(a+\delta,b-\delta)$, by the mean value theorem. Using the dominated convergence theorem, we get
$$
\lim_{\eps\to 0}\int_a^bg(v)\left(\int_{a+\delta}^{b-\delta}\frac{1}{\eps\wedge(b-u)}\I_{[u,u+\eps\wedge (b-u)]}(v)h(u)du\right)dv=(g_{\delta},h).
$$
Since $C[a+\delta,b-\delta]$ is dense in $L_2[a+\delta,b-\delta]$, $g_{\delta}=\widetilde{g}_{\delta}$ a.e. Making $\delta\to 0$, we obtain that $g=\widetilde{g}$ a.e. The proposition is proved.
\end{proof}

\begin{rem}\label{rem_unique_modification}
 It should be noted that the function $\widetilde{g}$, constructed in the proof of Proposition~\ref{prp_L2_Uparrow_crit}, is the unique modification of $g$ that belongs to $D^{\uparrow}$.
\end{rem}

Next we give a characterization of the inequality $\Pi_g\leq\Pi_f$ via the functionals $(\cdot,h_{u,\eps})$. Let us recall that $\Pi_g\leq\Pi_f$ if for each $\pi\in\Pi_f$ there exists $\pi'\in\Pi_g$ such that $\Int\pi\subseteq\pi'$. Let $D$ be dense in $[a,b]$.

\begin{lem}\label{lemma_conditions_for_inequality_for_Pi}
 Let $f,g\in L_2^{\uparrow}$. Then $\Pi_g\leq\Pi_f$ if and only if for all $u_1,u_2\in D$ and $\eps_1,\eps_2\in(0,b-a)\cap\Q$ such that $u_1+\eps_1\leq u_2$, the equality $(f,h_{u_1,\eps_1})=(f,h_{u_2,\eps_2})$ implies $(g,h_{u_1,\eps_1})=(g,h_{u_2,\eps_2})$.
\end{lem}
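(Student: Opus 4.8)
The plan is to reduce both implications to a single observation about the averaging functionals: for a function $h\in L_2^{\uparrow}$, identified with its modification from $D^{\uparrow}$, and points with $u_1+\eps_1\leq u_2$, the equality $(h,h_{u_1,\eps_1})=(h,h_{u_2,\eps_2})$ holds if and only if $h$ is constant a.e.\ on the whole interval $[u_1,u_2+\eps_2']$, where $\eps_i'=\eps_i\wedge(b-u_i)$. First I would establish this observation; it is the main obstacle, as it must be handled carefully with the c\`{a}dl\`{a}g endpoints and the a.e./$D^{\uparrow}$ identification. The nontrivial direction uses only monotonicity of $h$: the average $(h,h_{u_1,\eps_1})$ is bounded above by $h(w)$ for every $w\geq u_1+\eps_1'$, while $(h,h_{u_2,\eps_2})$ is bounded below by $h(w)$ for every $w\leq u_2$. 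Since $u_1+\eps_1'\leq u_2$, equality of the two averages forces $h(w)$ to equal their common value $A$ for all $w\in[u_1+\eps_1',u_2]$; then $h\leq A$ on the first window and $h\geq A$ on the second, each with average $A$, so $h\equiv A$ a.e.\ on $[u_1,u_2+\eps_2']$. The converse direction is immediate.

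For the implication $\Pi_g\leq\Pi_f\Rightarrow$ (the functional condition), I would fix admissible $u_1,u_2\in D$, $\eps_1,\eps_2\in(0,b-a)\cap\Q$ with $u_1+\eps_1\leq u_2$ and assume $(f,h_{u_1,\eps_1})=(f,h_{u_2,\eps_2})$. By the observation applied to $f$, the function $f$ is constant on $[u_1,u_2+\eps_2']$; since this interval has positive length, the corresponding level set is an element $\pi\in\Pi_f$ with $(u_1,u_2+\eps_2')\subseteq\Int\pi$. By hypothesis there is $\pi'\in\Pi_g$ with $\Int\pi\subseteq\pi'$, so $g$ is constant on $(u_1,u_2+\eps_2')$, hence a.e.\ on both test windows, and therefore $(g,h_{u_1,\eps_1})=(g,h_{u_2,\eps_2})$.

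For the reverse implication I would fix $\pi\in\Pi_f$ with $\Int\pi=(c,d)$ and show that $g$ is constant on $(c,d)$; the maximal level set of $g$ containing $(c,d)$ is then an element $\pi'\in\Pi_g$ with $\Int\pi\subseteq\pi'$, giving $\Pi_g\leq\Pi_f$ as $\pi$ ranges over $\Pi_f$. To this end, take an arbitrary compact $[x,y]\subset(c,d)$. Using density of $D$ in $[a,b]$ and of $\Q$, I would pick $u_1\in D\cap(c,x)$, $u_2\in D\cap(y,d)$ and rationals $\eps_1,\eps_2>0$ so that $u_1+\eps_1\leq u_2$ and both windows $[u_1,u_1+\eps_1]$, $[u_2,u_2+\eps_2]$ lie inside $(c,d)$. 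Then $f$ is constant on both windows, so $(f,h_{u_1,\eps_1})=(f,h_{u_2,\eps_2})$; the functional condition gives $(g,h_{u_1,\eps_1})=(g,h_{u_2,\eps_2})$, and the observation applied to $g$ yields that $g$ is constant on $[u_1,u_2+\eps_2']\supseteq[x,y]$. Since $[x,y]\subset(c,d)$ was arbitrary, $g$ is constant on $(c,d)$, which finishes the argument.
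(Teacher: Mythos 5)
Your proof is correct and follows essentially the same route as the paper's: in both directions the key step is that equality of the two window averages of a non-decreasing function forces the function to be constant (a.e.) on the whole span $[u_1,u_2+\eps_2']$, which the paper asserts tersely "by monotonicity" and you prove explicitly as a standalone observation. The only difference is presentational — your version spells out the a.e./c\`{a}dl\`{a}g bookkeeping that the paper leaves implicit.
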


\begin{proof}
  Let $u_i$, $\eps_i$, $i=1,2$, be as in the assumption of the statement, $(f,h_{u_1,\eps_1})=(f,h_{u_2,\eps_2})$ and $\Pi_g\leq\Pi_f$. Since $f$ and $g$ belong to $L_2^{\uparrow}$, we may suppose that $f,g\in D^{\uparrow}$. Then by monotonicity of $f$, $f(u_1)=f(v)$ for all $v\in[u_1,u_2+\eps_2)$. Hence, there exists $\pi\in\Pi_f$ such that $[u_1,u_2+\eps_2)\subseteq\pi$. Next using definition of the partial order between $\Pi_f$ and $\Pi_g$, there exists $\pi'\in\Pi_g$ such that $(u_1,u_2+\eps_2)\subseteq\Int\pi\subseteq\pi'$. So, $g(u_1)=g(u_1+)=g(v)$ for all $v\in(u_1,u_2+\eps_2)$. Thus, it implies $(g,h_{u_1,\eps_1})=(g,h_{u_2,\eps_2})$.

  Conversely, let for all $u_1,u_2\in D$ and $\eps_1,\eps_2\in(0,b-a)\cap\Q$ such that $u_1+\eps_1\leq u_2$, the equality $(f,h_{u_1,\eps_1})=(f,h_{u_2,\eps_2})$ implies $(g,h_{u_1,\eps_1})=(g,h_{u_2,\eps_2})$. Taking $\pi\in\Pi_f$, we then have that $f(u)=f(v)$ for all $u,v\in\pi$. It implies that for all $u_i$, $\eps_i$, $i=1,2$, satisfying assumption of the statement and $(u_1,u_1+\eps_1)\cup(u_2,u_2+\eps_2)\subseteq\pi$, $(f,h_{u_1,\eps_1})=(f,h_{u_2,\eps_2})$. Consequently, $(g,h_{u_1,\eps_1})=(g,h_{u_2,\eps_2})$. Since $D$ is dense in $[a,b]$ and $g$ is a monotone function, $g(u)=g(v)$ for all $u,v\in\Int\pi$. Hence, there exists $\pi'\in\Pi_g$ such that $\Int\pi\subseteq\pi'$. It finishes the proof of the lemma.
\end{proof}

\section{Proof of propositions~\ref{prp_coalescing} and~\ref{prp_modif_from_D_for_mart}}\label{appendix_modif_from_D_for_mart}

In this section we use the notation from the previous one.

\begin{proof}[Proof of Proposition~\ref{prp_coalescing}]
For every $u\in(a,b)\cap\Q$ and $\eps\in(0,b-a)\cap\Q$ we set
$$
M_{u,\eps}(t)=(X(t),h_{u,\eps}),\quad t\in[0,T],
$$
and
$$
F=\{(u_1,u_2,\eps_1,\eps_2)\in((a,b)\cap\Q)^2\times((0,b-a)\cap\Q)^2:\ u_1+\eps_1\leq u_2\}.
$$
Note that $F$ is countable. Let
$$
\Omega'=\bigcap_{(u_1,u_2,\eps_1,\eps_2)\in F}\{\forall s\leq t\ \mbox{if } M_{u_1,\eps_1}(s)=M_{u_2,\eps_2}(s),\ \mbox{then } M_{u_1,\eps_1}(t)=M_{u_2,\eps_2}(t)\}.
$$

Since $M_{u_i,\eps_i}$, $i=1,2$, are continuous martingales and $M_{u_1,\eps_1}(t)\leq M_{u_2,\eps_2}(t)$, $t\in[0,T]$, Proposition~2.3.4~\cite{Revuz:1999} implies
$$
\p\{\forall s\leq t\ \mbox{if } M_{u_1,\eps_1}(s)=M_{u_2,\eps_2}(s),\ \mbox{then } M_{u_1,\eps_1}(t)=M_{u_2,\eps_2}(t)\}=1,\quad (u_1,u_2,\eps_1,\eps_2)\in F.
$$
By the countability of $F$, $\p\{\Omega'\}=1$. Next, Lemma~\ref{lemma_conditions_for_inequality_for_Pi} easily yields that for all $\omega\in\Omega'$ and $s\leq t$, $\Pi_{X(t,\omega)}\leq\Pi_{X(s,\omega)}$. The proposition is proved.
\end{proof}

\begin{proof}[Proof of Proposition~\ref{prp_modif_from_D_for_mart}]

To prove the proposition, we are going to construct the process~$\widetilde{X}$. Let $\{r_n,\ n\in\N\}\subset(0,T]$, $r_n\downarrow 0$ and
$$
\Omega'=\left\{\forall n\in\N\ \ \ X(r_n)\in\St\right\}
  \cap\left\{\mbox{for all } s\leq t,\ \Pi_{X(t)}\leq\Pi_{X(s)} \right\}\cap\{X\ \mbox{is continuous}\}.
$$
By Proposition~\ref{prp_coalescing}, $\p\{\Omega'\}=1$. Note that
$$
X_\eps(u,t,\omega)=(X(t,\omega),h_{u,\eps}),\quad t\in[0,T],
$$
is continuous for all $u\in(a,b)$ and $\omega\in\Omega'$. Moreover, since $X(r_n,\omega)\in\St$ and $\Pi_{X(t,\omega)}\leq\Pi_{X(r_n,\omega)}$ for all $t\in[r_n,T]$, there exists $\eps_0=\eps_0(u,n,\omega)>0$ such that
$$
X_{\eps'}(u,t,\omega)=X_{\eps''}(u,t,\omega)
$$
for all $0<\eps',\eps''\leq\eps_0$, $t\in[r_n,T]$, $u\in(a,b)$ and $\omega\in\Omega'$. We set
$$
\widetilde{X}(u,t,\omega)=\begin{cases}
                            g(u),& \omega\not\in\Omega',\\
                            \lim_{\eps\to 0}X_{\eps}(u,t,\omega),& \omega\in\Omega',
                          \end{cases},\quad u\in(a,b),\ \ t\in(0,T],
$$
that is well-defined. By the construction of $\widetilde{X}$, $\widetilde{X}(u,t)$, $t\in(0,T]$, is continuous for all $u\in(a,b)$ and $\widetilde{X}(t,\omega)=X(t,\omega)$ (in $L_2$), $t\in(0,T]$, $\omega\in\Omega'$. Furthermore, by Proposition~\ref{prp_L2_Uparrow_crit}, \begin{equation}\label{f_monot}
\widetilde{X}(u,t,\omega)\leq \widetilde{X}(v,t,\omega),\quad u<v,\ t\in(0,T],\ \omega\in\Omega'.                                                                                                                                                                                   \end{equation}

Next, we want to extend $\widetilde{X}(u,\cdot)$ to $[0,T]$. First we will do this for all $u$ from a countable dense subset $U$ in $(a,b)$. Denote
$$
U=\{u\in(a,b):\ g\ \mbox{discontinuous at}\ u\}\cup\left((a,b)\cap\Q\right).
$$
We note that $U$ is dense in $(a,b)$ and since $g$ is a monotone function, $U$ is also countable.

Let $u\in U$ be fixed. Since $X_{\eps}(u,t)$, $t\in[0,T]$, is a continuous $(\F_t)$-square integrable martingale,
\begin{align*}
 \p\left\{\sup_{t\in[0,T]}|X_{\eps'}(u,t)-X_{\eps''}(u,t)|>r\right\}
 \leq\frac{1}{r^2}\E\left(X_{\eps'}(u,T)-X_{\eps''}(u,T)\right)^2\to 0\quad\mbox{as}\ \ \eps',\eps''\to 0,
\end{align*}
by Lemma~\ref{lem_boundedness_of_g} and the dominated convergence theorem.
Hence, $\{X_{\eps}(u,\cdot)\}_{\eps>0}$ is a Cauchy sequence in $C[0,T]$. Consequently, $X_{\eps}(u,\cdot)\to Y(u,\cdot)$ in probability, where $Y(u,\cdot)$ is a continuous process. Note that $X_{\eps}(u,0)=(g,h_{u,\eps})\to g(u)=Y(u,0)$, since $g$ is right continuous. The construction of $\widetilde{X}$ and $Y$ implies $\widetilde{X}(u,\cdot)=Y(u,\cdot)$ on $(0,T]$ a.s. and we can extend $\widetilde{X}(u,\cdot)$ to $[0,T]$ putting
$$
\widetilde{X}(u,0)=\lim_{t\to 0}\widetilde{X}(u,t)=\lim_{t\to 0}Y(u,t)=g(u)\quad \mbox{a.s.}
$$

Let
$$
\Omega''=\Omega'\cap\{\mbox{for all}\ u\in U,\ \widetilde{X}(u,t)\to g(u)\ \mbox{as}\ t\to 0\}.
$$
Since $U$ is countable, $\p\{\Omega''\}=1$.

Next, using~\eqref{f_monot}, we show that for all $\omega\in\Omega''$ and $u\in(a,b)$
\begin{equation}\label{f_lim_at_0}
\lim_{t\to 0}\widetilde{X}(u,t,\omega)=g(u).
\end{equation}
It will imply that $\widetilde{X}(u,\cdot,\omega)$ can be extended to a continuous function on $[0,T]$. Note that it is needed to check~\eqref{f_lim_at_0} only for $u\not\in U$. Here we are going to use the fact that $g$ is continuous at any $u\not\in U$. Let $r>0$, $\omega\in\Omega''$ and $v_1<u<v_2$ such that $v_1,v_2\in U$, $g(u)-g(v_1)<\frac{r}{2}$ and $g(v_2)-g(u)<\frac{r}{2}$. Next, since $\widetilde{X}(v_i,t,\omega)\in C[0,T]$, $i=1,2$, there exists $\delta>0$ such that for all $t\in(0,\delta)$
\begin{align*}
  \widetilde{X}(v_1,t,\omega)-g(v_1)>-\frac{r}{2}\quad\mbox{and}\quad
  \widetilde{X}(v_2,t,\omega)-g(v_2)<\frac{r}{2}.
\end{align*}
By the monotonicity of $\widetilde{X}(\cdot,t,\omega)$ (see~\eqref{f_monot}),
\begin{align*}
  \widetilde{X}(u,t,\omega)&-g(u)\geq\widetilde{X}(v_1,t,\omega)-g(v_1)+g(v_1)-g(u)>-\frac{r}{2}-\frac{r}{2}=-r
\end{align*}
for all $t<\delta$.
Similarly,
$$
\widetilde{X}(u,t,\omega)-g(u)<r,\quad t<\delta.
$$
It proves that $\widetilde{X}(u,t,\omega)\to g(u)$ as $t\to 0$. Thus, we can put $\widetilde{X}(u,0)=g(u)$.

Next we show that $\widetilde{X}(u,\cdot,\omega)$, $u\in(a,b)$, is right continuous in $C[0,T]$ for all $\omega\in\Omega''$. Let $u\in(a,b)$, $\omega\in\Omega''$ and $u_n\downarrow u$. It is easy to see that $\widetilde{X}(u_n,t,\omega)\downarrow \widetilde{X}(u,t,\omega)$ for all $t\in[0,T]$. Since $\widetilde{X}(u,\cdot,\omega)\in C[0,T]$, $\widetilde{X}(u_n,\cdot,\omega)\to\widetilde{X}(u,\cdot,\omega)$ in $C[0,T]$, by Dini's theorem. It implies that $\widetilde{X}(u,\cdot,\omega)$, $u\in(a,b)$, is right continuous. Similarly, we can check that $\widetilde{X}(u,\cdot,\omega)$, $u\in(a,b)$, has left limits in $C[0,T]$.

Also it should be noted that
$$
\Omega''\subseteq\{\forall u,v\in(a,b)\ \forall s\leq t\ \mbox{if } \widetilde{X}(u,s)=\widetilde{X}(v,s),\ \mbox{then } \widetilde{X}(u,t)=\widetilde{X}(v,t)\}.
$$
It proves~\eqref{f_coalescing}.

To finish the proof of the proposition, we have to show that $\widetilde{X}(u,t)$, $t\in[0,T]$, is an $(\F_t)$-square integrable martingale for all $u\in(a,b)$. First we show that $\E (\widetilde{X}(u,t))^2<\infty$ for all $u\in(a,b)$ and $t\in[0,T]$. Set
$$
\widetilde{X}^+(u,t)=\widetilde{X}(u,t)\vee 0\quad\mbox{and}\quad\widetilde{X}^-(u,t)=(-\widetilde{X}(u,t))\vee 0.
$$
By the monotonicity of $\widetilde{X}(u,t)$ in $u$, we have
\begin{align*}
  \left(\widetilde{X}^+(u,t)\right)^2&\leq\frac{1}{b-u}\int_u^b\left(\widetilde{X}^+(v,t)\right)^2dv\leq\frac{1}{b-u}\int_u^b\left(\widetilde{X}(v,t)\right)^2dv\\
  &\leq\frac{1}{b-u}\int_a^b\left(\widetilde{X}(v,t)\right)^2dv\leq\frac{1}{b-u}\|\widetilde{X}(t)\|^2_{L_2}=\frac{1}{b-u}\|X(t)\|^2_{L_2}.
\end{align*}
Similarly,
$$
\left(\widetilde{X}^-(u,t)\right)^2\leq\frac{1}{u-a}\|X(t)\|^2_{L_2}.
$$
Hence,
\begin{equation}\label{f_boundedness_of_xi}
|\widetilde{X}(u,t)|=\widetilde{X}^+(u,t)+\widetilde{X}^-(u,t)\leq\left(\frac{1}{\sqrt{b-u}}+\frac{1}{\sqrt{u-a}}\right)\|X(t)\|_{L_2}.
\end{equation}
Since $\E\|X(t)\|^2_{L_2}<\infty$, \eqref{f_boundedness_of_xi} implies $\E\left(\widetilde{X}(u,t)\right)^2<\infty$.

Next, if $s<t$, then $\E \left(\left.X_{\eps}(u,t)\right|\F_s\right)=X_{\eps}(u,s)$ and the monotone convergence theorem imply $\E \left(\left.\widetilde{X}(u,t)\right|\F_s\right)=\widetilde{X}(u,s)$. The proposition is proved.
\end{proof}

\textbf{Acknowledgements.} The author is deeply grateful to Max von Renesse for useful discussions and suggestions. The research was supported by the Alexander von Humboldt Foundation.




\providecommand{\bysame}{\leavevmode\hbox to3em{\hrulefill}\thinspace}
\providecommand{\MR}{\relax\ifhmode\unskip\space\fi MR }
\providecommand{\MRhref}[2]{%
  \href{http://www.ams.org/mathscinet-getitem?mr=#1}{#2}
}
\providecommand{\href}[2]{#2}

\end{document}